\newcolumntype{M}[1]{>{\centering\arraybackslash}m{#1}}
\def\section{\@startsection{section}{1}%
  \z@{1.2\linespacing\@plus\linespacing}{.9\linespacing}%
  {\normalfont\bfseries\large}}
\newtheorem{theorem}{Theorem}
\newtheorem*{THM}{Theorem}
\newtheorem*{addend}{Addendum to Theorem~\ref{thmA}}
\newtheorem{proposition}{Proposition}[section]
\newtheorem{lemma}[proposition]{Lemma}
\theoremstyle{definition}
\newtheorem{definition}[proposition]{Definition}
\newtheorem{remark}[proposition]{Remark}
\newcounter{parage}[section]
\renewcommand{\theparage}{{\thesection.\arabic{parage}}}
\newcommand{\parage}{\medskip \refstepcounter{parage}
\noindent{\bf\theparage\ } }
\newcounter{parag}[subsection]
\renewcommand{\theparag}{{\thesubsection.\arabic{parag}}}
\newcommand{\parag}{\medskip
  \refstepcounter{parag} \noindent{{\it\theparag.}\hspace{-.2em} }}
\newcounter{paraga}[subsection]
\renewcommand{\theparaga}{{\theparage.\arabic{paraga}}}
\newcommand{\paraga}{\medskip
  \refstepcounter{paraga} \noindent{{\it\theparaga.}\hspace{-.2em} }}
\newcommand{\cL}{{\mathcal L}}
\newcommand{\cF}{{\mathcal F}}
\newcommand{\cM}{{\mathcal M}}
\newcommand{\cN}{{\mathcal N}}
\newcommand{\D}{{\mathcal D}}
\newcommand{\cH}{\mathcal{H}}
\newcommand{\cS}{{\mathcal S}}
\newcommand{\V}{{\mathcal V}}
\newcommand{\vf}{\varphi}
\newcommand{\A}{{\mathcal A}}
\newcommand{\be}{\begin{equation}}
\newcommand{\ee}{\end{equation}}
\newcommand{\bea}{\begin{eqnarray}}
\newcommand{\eea}{\end{eqnarray}}
\newcommand{\om}{\omega}
\newcommand{\Om}{\Omega}
\newcommand{\ad}{\operatorname{ad}}
\newcommand{\lable}{\label}
\newcommand{\N}{\mathbb N}
\newcommand{\R}{\mathbb R}
\newcommand{\T}{\mathbb T}
\newcommand{\Z}{\mathbb Z}
\newcommand{\C}{\mathbb C}
\newcommand{\Q}{\mathbb Q}
\newcommand{\kk}{\mathbf{k}}
\newcommand{\De}{\Delta}
\newcommand{\lam}{\lambda}
\newcommand{\sig}{\sigma}
\newcommand{\Th}{\Theta}
\newcommand{\ph}{\varphi}
\newcommand{\un}{{\underline n}}
\newcommand{\ula}{{\underline\lam}}
\newcommand{\ua}{{\underline a}}
\newcommand{\ub}{{\underline b}}
\newcommand{\UM}{{\underline\cM}}
\newcommand{\UN}{{\underline\cN}}
\newcommand{\est}{{\scriptstyle\diameter}}
\newcommand{\bul}{\bullet}
\newcommand{\bbul}{{\bul,\bul}}
\newcommand{\na}{\nabla}
\newcommand{\I}{{\mathrm i}}
\newcommand{\dd}{{\mathrm d}}
\newcommand{\ex}{{\mathrm e}}
\newcommand{\ID}{\mathop{\hbox{{\rm Id}}}\nolimits}
\newcommand{\inv}{ { {}^{\mathrm{inv}} \! } }
\newcommand{\ti}{\tilde}
\newcommand{\sh}[3]{\operatorname{sh}\!\big( \begin{smallmatrix}#1,\,
#2\\#3\end{smallmatrix} \big)}
\newcommand{\shabn}{\sh{\ua}{\ub}{\un}}
\newcommand{\SH}[3]{\operatorname{sh}\!\Big( \begin{smallmatrix}#1,\,
#2\\[1.5ex]#3\end{smallmatrix} \Big)}
\newcommand{\ens}{\enspace}
\newcommand{\col}{\colon\thinspace}               
\newcommand*{\defeq}{\mathrel{\rlap{%
                     \raisebox{0.3ex}{$\m@th\cdot$}}%
                     \raisebox{-0.3ex}{$\m@th\cdot$}}%
                     =}
\newcommand{\ie}{{{i.e.}}\ }
\newcommand{\eg}{{{e.g.}}\ }
\newcommand{\resp}{{resp.}\ }
\newcommand{\lhs}{{left-hand side}}
\newcommand{\rhs}{{right-hand side}}
\newcommand{\End}{\operatorname{End}}
\newcommand{\ord}{\operatorname{ord}}
\newcommand{\ii}{^{-1}}
\newcommand{\pa}{\partial}
\newcommand{\ov}{\overline}
\newcommand{\wt}{\widetilde}
\newcommand{\tr}[2]{{#1}_{#2}}
\newcommand{\eps}{\varepsilon}
\newcommand{\abs}[1]{\lvert #1 \rvert}
\newcommand{\scal}[2]{\langle #1,#2 \rangle}
\newcommand{\inn}[2]{\langle #1 \,|\, #2 \rangle}
\newcommand{\bra}[1]{\langle #1 \,|}
\newcommand{\ket}[1]{|\, #1 \rangle}
\DeclareRobustCommand{\vf}{\@ifstar\@vf\@@vf}
\newcommand{\@vf}[2]{[ #1, #2 ]_{\mathrm{vf}}}
\newcommand{\@@vf}[2]{\left[ #1, #2 \right]_{\mathrm{vf}}}
\DeclareRobustCommand{\ham}{\@ifstar\@ham\@@ham}
\newcommand{\@ham}[2]{[ #1, #2 ]_{\mathrm{ham}}}
\newcommand{\@@ham}[2]{\left[ #1, #2 \right]_{\mathrm{ham}}}
\DeclareRobustCommand{\qu}{\@ifstar\@qu\@@qu}
\newcommand{\@qu}[2]{[ #1, #2 ]_{\mathrm{qu}}}
\newcommand{\@@qu}[2]{\left[ #1, #2 \right]_{\mathrm{qu}}}
\newcommand{\bet}{\beta}
\newcommand{\Ham}{^{\mathrm{ham}}}
\newcommand{\cl}{^{\mathrm{cl}}}
\newcommand{\QU}{^{\mathrm{qu}}}
\newcommand{\RES}{_{\lam=0}}
\newcommand{\RRES}{_{\mu=0}}
\newcommand{\lres}[1]{\left[ #1 \right]_{\lam=0}}
\newcommand{\gA}{{\mathscr A}}
\newcommand{\gC}{{\mathscr C}}
\newcommand{\gR}{{\mathscr R}}
\newcommand{\gS}{{\mathscr S}}
\newcommand{\JJ}{{\mathscr J}_\lam}
\newcommand{\JJm}{{\mathscr J}_\mu}
\newcommand{\JJz}{{\mathscr J}}
\newcommand{\imp}{\quad \Rightarrow \quad}
\newcommand{\BCH}{\operatorname{BCH}}
\newcommand{\LIE}{\operatorname{Lie}}
\newcommand{\Span}{\operatorname{Span}}
\newcommand{\dem}{\tfrac{1}{2}}
\newcommand{\IND}[1]{{\mathds 1}_{\{ #1 \}}}
\newcommand{\cc}[1]{^{ [#1] }}
\newcommand{\sst}{\sideset{}{^*}}
\newcommand{\Bun}{B_{[\,\un\,]}}
\newcommand{\Bula}{B_{[\,\ula\,]}}
\newcommand{\Bul}{B_{[\,\bul\,]}}
\newcommand{\kkfUN}{\kk^{(\,\UN\,)}}
\newcommand{\kUN}{\kk\,\UN}
\newcommand{\Altf}{\operatorname{Alt}_f^\bul}
\newcommand{\ebas}{\mathbf{e}}
\newcommand{\SD}{\mathbf{S}}
\newcommand{\ACE}{\gA^\C_\ebas}
\newcommand{\LCE}{\cL^\C_\ebas}
\newcommand{\LRE}{\cL^\R_\ebas}
\newcommand{\LREfb}{\cL^\R_{\ebas,\mathrm{fb}}}
\newcommand{\LRfb}{\cL^\R_{\mathrm{fb}}}
\newcommand{\hb}{{\mathchar'26\mkern-9mu h}}
\renewcommand{\hbar}{\hb}
\newcommand{\Alt}{\operatorname{Alt}^\bul}
\newcommand{\Sym}{\operatorname{Sym}^\bul}
\newcommand{\id}{\operatorname{id}}
\newcommand{\card}{\operatorname{card}}
\newcommand{\ugeq}[1]{_{\ge #1}}
\newcommand{\uleq}[1]{_{\le #1}}
\newcommand{\usup}[1]{_{> #1}}
\newcommand{\uinf}[1]{_{< #1}}
\newcommand{\uim}{\uinf m}
\newcommand{\LhS}{\operatorname{LHS}}
\newcommand{\LHS}[1]{\operatorname{LHS}(#1)}
\newcommand{\LHSp}[1]{\operatorname{LHS}'(#1)}
\newcommand{\um}{{\underline m}}
\newcommand{\fS}{{\mathfrak{S}}}
\newcommand{\fA}{{\mathfrak{A}}}
\newcommand{\fB}{{\mathfrak{B}}}
\newcommand{\Bum}{B_{[\,\um\,]}}
\newcommand{\Tun}{T_{[\,\un\,]}}
\newcommand{\Tul}{T_{[\,\bul\,]}}
\newcommand{\shtiabn}{\sh{\wt\ua}{\ub}{\un}}
\newcommand{\cE}{{\mathcal E}}
\newcommand{\kkukk}{{\kk^{\underline{\kk}}}}
\newcommand{\kUNUN}{{\kk^{\UN\times\UN}}}
\newcommand{\shuff}{\shuffle}
\begin{document}
\title[]{Normalization in Lie algebras via mould calculus\\[1ex] and applications}

\author{Thierry Paul}
\address{
CMLS, Ecole
polytechnique, CNRS, Universit\'e Paris-Saclay,
91128 Palaiseau Cedex,
France}
\email{thierry.paul@polytechnique.edu}


\author{David Sauzin}
\address{CNRS UMI 8028--IMCCE \\
77 av.\ Denfert-Rochereau, 75014 Paris, France\\
\newline CNRS UMI 3483--Laboratorio Fibonacci \\
CRM E.\ De Giorgi, Scuola Normale Superiore di Pisa\\
Piazza dei Cavalieri 3, 56126 Pisa, Italy}
\email{David.Sauzin@obspm.fr}


\date{\today}
\maketitle

\vspace{-1.1ex}

\begin{abstract}
  We establish \'Ecalle's mould calculus in an abstract Lie-theoretic
  setting and use it to solve a normalization problem, which covers
  several formal normal form problems in the theory of dynamical systems.
  The mould formalism allows us to reduce the Lie-theoretic problem to
  a mould equation, the solutions of which are remarkably explicit and
  can be fully described by means of a gauge transformation group.

  The dynamical applications include 
  the construction of Poincar\'e-Dulac formal normal forms for a
  vector field around an equilibrium point, 
  a formal infinite-order multiphase averaging procedure for vector
  fields with fast angular variables (Hamiltonian or not), 
  or the construction of Birkhoff normal forms both in classical and
  quantum situations.
  As a by-product we obtain, in the case of harmonic oscillators, the
  convergence of the quantum Birkhoff form to the classical one,
  without any Diophantine hypothesis on the frequencies of the
  unperturbed Hamiltonians.
\end{abstract}

\vspace{-1.1ex}

\tableofcontents

\section*{Introduction}

%
We are interested in the following situation:
given $X_0, B \in \cL$,
where $\cL$ is a Lie algebra over a field~$\kk$ of characteristic zero,
%
%
we look for a Lie algebra automorphism~$\Psi$ which maps $X_0+B$ to an
element of~$\cL$ which commutes with~$X_0$.
We call such a~$\Psi$ a ``normalizing automorphism'' and $\Psi(X_0+B)$
is then called a ``normal form'' of $X_0+B$.
Our key assumption will be that~$B$ can be decomposed into a sum $B =
\sum B_n$ of eigenvectors of the inner derivation
%
%
$\ad_{X_0} \col Y \mapsto [X_0,Y]$.
We will also assume that~$\cL$ is a ``complete filtered Lie algebra''
(Definition~\ref{DefCFLA} below),
which will allow us to look for~$\Psi$ in the form of the exponential
of an auxiliary inner derivation.

Our first aim in this article is to introduce \'Ecalle's ``mould calculus''
for this situation, in the simplest possible way, and to use it to
find an explicit solution to the normalization problem:
we will obtain $\Psi = \exp(\ad_Y)$ and $\Psi(X_0+B)=X_0+Z$ with
$Y,Z\in\cL$ given by explicit formal series involving all possible iterated
Lie brackets $[B_{n_r},[\ldots[B_{n_2},B_{n_1}]\ldots]]$.
It is the family of coefficients that one puts in front of these
iterated Lie brackets that is called a ``mould'';
we shall be led to an equation for the moulds associated with~$Y$
and~$Z$, and our second main result will consist in describing all
its solutions, especially all those which are ``alternal moulds'' (see
below),
and giving an algorithm to compute them.
%

\setlength{\tabcolsep}{1pt}
\begin{table}
\centering
\resizebox{\columnwidth}{!}{%
\begin{tabular}{ | M{2.3cm} | M{5.5cm} | M{4.7cm} | M{4.65cm} |  }
\hline
&
$\begin{array}{c}
\text{Nature of the Lie algebra~$\cL$}\\ 
\text{and its Lie bracket}
\end{array}$
&
  Element to be normalized
   $X = X_0 + B$, \, $B=\sum B_n$
&
   Normalization
   $\ex^{\ad_Y} X = X_0 + Z$
\\[1ex] \hline
  Poincar\'e-Dulac normal form
& 
  $\begin{array}{c}
  \text{Formal vector fields} \\
   \text{in $z_1,\ldots,z_N$ with}\\
   \text{their natural Lie bracket}
   \end{array}$  
& 
  $\begin{array}{c}
     X_0= \sum\limits_{j=1}^N \om_jz_j\pa_{z_j}
\\[2ex]
     B = \sum\limits_{n\in\cN} B_n  \\[2ex]
      \cN = \{\, \scal{k}{\om} - \om_j\,\}\subset\C
   \end{array}$  
& 
  $\begin{array}{c}
     \ex^{\ad_Y} X = \Phi\ii_* X, \\[1ex]
     \text{$\Phi \defeq$ formal time-$1$}\\
     \text{map for~$Y$,}\\[.5ex]
     \text{$Z$ resonant}
   \end{array}$
\\[7ex]  \hline
  Birkhoff normal form
&  
  $\begin{array}{c} 
\text{Formal Hamiltonians} \\ 
\text{in $x_1,y_1,\dots,x_d,y_d$} \\
\text{with Poisson bracket} \\
\text{for $\sum \dd x_j\wedge \dd y_j$}
\end{array}$
&
  $\begin{array}{c}
     X_0=\sum\limits_{j=1}^d \dem \om_j(x_j^2+y_j^2) \\[2ex]
     B=\sum\limits_{n\in\Z^d} B_{n}\\[2ex] 
\lam(n) = \I\, \scal{n}{\om}
   \end{array}$
&
  $\begin{array}{c} 
\ex^{\ad_Y} X = X\circ\Phi, \\
     \text{$\Phi \defeq$ formal time-$1$ map}\\
     \text{for the Hamiltonian}\\
\text{vector field $\{Y,\cdot\,\}$,}\\
     \text{$Z$ resonant}
 \end{array}$
\\[6ex]  \hline
  Multiphase averaging
&
  $\begin{array}{c} 
\text{Vector fields or Hamiltonians}\\
\text{$\sum F_j \frac{\pa\,\;}{\pa\ph_j} + 
\sum G_k \frac{\pa\,\;}{\pa I_k}$
or $H(\ph,I)$}\\
      \text{trigonometric polyn.\ in~$\ph$,}\\
\text{smooth in~$I$, formal in~$\eps$}
 \end{array}$
&
  $\begin{array}{c}
    \text{$X_0 = \sum \om_j \frac{\pa\,\;}{\pa\ph_j}$\;\,or\,\;$\scal{\om}{I}$}\\[1.5ex]
     B = \sum\limits_{n\in\Z^d} B_n\\[2ex]
\lam(n) = \I \, \scal{n}{\om}
   \end{array}$
&
  $\begin{array}{c} 
     \text{$\ex^{\ad_Y} X = \Phi\ii_* X$ or $X\circ\Phi$,} \\[.5ex]
     \text{$\Phi \defeq$ formal time-$1$}\\
     \text{map for~$Y$ or $\{Y,\cdot\,\}$,}\\[.5ex]
     \text{$Z$ resonant, formal in $\eps$}
 \end{array}$
\\[1.5ex]  \hline
  Quantum perturbation theory
  &
$\begin{array}{c}
\text{$\LCE[[\eps]]$, operators in a Hilbert}\\
\text{formal in~$\eps$, finite-column}\\
 \text{w.r.t.\ an orthonormal basis~$\ebas$,}\\
  \qu{\cdot\,}\cdot \defeq \frac{1}{\I\hb}\times\text{commutator}
\end{array}$
 &
$\begin{array}{c}
     X_0 = \sum\limits_{k\in I}\ket{e_k} \, E_k \, \bra{e_k} \\[1ex]
     B = \sum\limits_{n\in\cN} B_n\\[1ex] 
   \cN =  \{\,\tfrac{1}{\I\hb}(E_\ell-E_k)\,\} \subset \C
     \end{array}$
& 
    $\begin{array}{c}
\ex^{\ad_Y} X = \ex^{\frac{1}{\I\hb}Y} X \, \ex^{-\frac{1}{\I\hb}Y}, \\[1.5ex]
\text{$Z$ block-diagonal on~$\ebas$}\\
\text{and formal in~$\eps$}
     \end{array}$ 
  \\[8ex]  \hline
  Quantum perturbation theory uniform in  $\hb\to 0$
  &
$\begin{array}{c}
\text{$\LREfb[[\eps]]$, operators in $L^2(\R^d)$}\\
\text{obtained by}\\
\text{Weyl quantization}\\[.5ex]
  \qu{\cdot\,}\cdot \defeq \frac{1}{\I\hb}\times\text{commutator}
\end{array}$
 &
$\begin{array}{c}
          X_0 = \hspace{2.9cm} \text{}\\
  \text{ } \hspace{.55cm}   -\dem\hb^2\De_{\R^d}+\sum\dem\om_j^2 x_j^2
     \\[.7ex]
    B = \sum\limits_{n \in \Z^d} B_{n}\\[.7ex] 
\lam(n) = \I \, \scal{n}{\om}
     \end{array}$
& 
    $\begin{array}{c}
\ex^{\ad_Y} X = \ex^{\frac{1}{\I\hb}Y} X \, \ex^{-\frac{1}{\I\hb}Y}, \\[1ex]
\text{symbol of $Z$ tending}\\
\text{to classical B.N.F.}\\
\text{as $\hb\to0$}
     \end{array}$ 
  \\[8ex]  \hline
\end{tabular}%
}
\medskip
\caption{Synthetic overview of applications to dynamics}
\label{tableApplyn}
\end{table}


Next, we give applications of our result to perturbation theory in classical and
quantum dynamics.
Indeed, there are several formal normalization problems for dynamical systems or
quantum systems which can be put in the above form:
\begin{enumerate}[--]
\item
  the construction of Poincar\'e-Dulac formal normal forms for a
  vector field around an equilibrium point with diagonalizable linear
  part,
  taking for~$X_0$ the linear part of the vector field and for~$\cL$
  the Lie algebra of formal vector fields;
\item
  the construction of Hamiltonian Birkhoff normal forms at an elliptic
  equilibrium point,
  taking for~$X_0$ the quadratic part of the Hamiltonian and for~$\cL$
  the Poisson algebra of formal Hamiltonian functions;
\item
  the elimination at every perturbative order (``averaging'') of a
  fast angular variable $\ph\in\T^d$ with fixed frequency $\om\in\R^d$
  in a slow-fast vector field (Hamiltonian or not),
  taking $X_0 = \sum \om_j \frac{\pa\,\;}{\pa\ph_j}$;
\item
  the construction of quantum Birkhoff normal forms in a
  Rayleigh-Schr\"odinger-type situation,
%
%
  taking for~$X_0$ the unperturbed part of the quantum Hamiltonian and
  for~$\cL$ a Lie algebra of operators of the underlying Hilbert space.
\end{enumerate}

%
There is a fifth application, dealing with the way the coefficients of
the quantum Birkhoff normal forms formally converge, as $\hb\to0$, to
those of the classical Birkhoff normal form.

The reader will find a synthetic overview of the dynamical
applications in Table~\ref{tableApplyn} on p.~\pageref{tableApplyn} and more explanations in
Sections~\ref{secPDNF}--\ref{secSemiCl}, particularly about the way
one can use ``homogeneity'' to decompose a given~$B$ into a sum
$\sum B_n$ of eigenvectors of $\ad_{X_0}$ 
(the indices~$n$ belong to a countable set
depending on the chosen example; the eigenvalue associated with~$n$ is
denoted by~$\lam(n)$ when it is not~$n$ itself).

In our view, one of the merits of the Lie-theoretic framework we have
devised is its unifying power.
Indeed, the dynamical applications we have mentioned are well-known,
but what is new is the way we obtain each of them as a by-product of
one theorem on the normalization problem in a Lie algebra which itself
derives from one theorem on the solutions of a certain mould
equation. 
The fact that one can use exactly the same moulds in all these
applications is in itself remarkable.
This point of view offers a better understanding of the combinatorics
involved in these applications.
In particular we shall see that our approach gives a more direct way
of relating quantum and classical normal forms (last line of Table
\ref{tableApplyn}).

Normal forms in completed graded Lie algebras have been studied in
\cite{menousjmp}, which is dedicated to logarithmic derivatives
associated with graded derivations, motivated by perturbative quantum field
theory. However, we see no obvious way of deducing our main results
from \cite{menousjmp}, which works in a different context and adopts a
more Hopf-algebraic point of view without involving any moulds.

A forthcoming paper \cite{Dyn} will be devoted to normal form problems
similar to the ones studied in the present article (including
applications to classical and quantum dynamics), but in the framework
of \textit{Banach scales of Lie algebras}; there, the focus will be on
more quantitative results, which can be obtained thanks to the mould
representation of the solution in a more analytic context.

\medskip

Our method relies on \'Ecalle's concept of ``alternal mould''
(\cite{Eca81}, \cite{Eca93}) and owes a lot to the
article \cite{EV95} (particularly the part on the so-called ``mould of the regal prenormal
form'' in it).
Our approach is however slightly different, and it incorporates a more
direct introduction of alternality, because we work in a Lie algebra
rather than with an associative algebra of operators which would
themselves act on an associative algebra.
We do not require from the reader any previous knowledge of the mould
formalism.
We will provide original self-contained proofs, except for a few elementary
facts of \'Ecalle's theory the proof of which can be found \eg in
\cite{mouldSN}; 
at a technical level, we shall use crucially the ``dimoulds''
introduced in \cite{mouldSN}.

The core of our work consists in finding and describing the alternal
moulds solutions to a certain equation.
This is tightly related to algebraic combinatorics. For instance,
finite-support alternal moulds can be identified with the primitive
elements of a certain combinatorial Hopf algebra, and general alternal
moulds with the infinitesimal characters of the dual Hopf
algebra. Moreover, the mould counterpart to the grouplike elements of this
Hopf algebra and the characters of its dual is embodied in \'Ecalle's
concept of ``symmetrality''.
Solving our mould equation will lead us to a generalisation of the
classical character of the combinatorial Hopf algebra
$\operatorname{QSym}$ related to the Dynkin Lie idempotent.
However, in this article, we shall not use the language of Hopf
algebras but rather stick to \'Ecalle's mould calculus and its
application to our Lie-theoretic problem.


\medskip

%
The article is divided into three parts.
\begin{enumerate}[--]
\item
The part ``Main general results'' contains two sections. 
The first is devoted to the statement of the first main result,
Theorem~\ref{thmA}, in the context of complete filtered Lie algebras.
The second section gives the minimum amount of the mould formalism necessary
to state the second main result, Theorem~\ref{thmB}, about the set of
all alternal solutions to a certain mould equation.
\item
  The part ``Lie mould calculus'' contains two sections:
  Section~\ref{secLMCpfThmA} explains the origin of the notion of
  alternal mould in relation with computations in a Lie algebra, and
  then derives the proof of Theorem~\ref{thmA} from
  Theorem~\ref{thmB}.
  Section~\ref{secpfThmB} gives the proof of Theorem~\ref{thmB} with
  the help of ``dimoulds''.
\item
The part ``Five dynamical applications'' contains five sections, each
devoted to a particular application of Theorem~\ref{thmA}:
Section~\ref{secPDNF} for Poincar\'e-Dulac normal forms of formal
vector fields,
Section~\ref{secCBNF} for classical Birkhoff normal forms of formal
Hamiltonians,
Section~\ref{secmultiphas} for the elimination of a fast angular phase
in formal slow-fast vector fields,
Section~\ref{secQBNF} for quantum Birkhoff normal forms of formal
perturbations of certain quantum Hamiltonians,
Section~\ref{secSemiCl} for the formal convergence of quantum Birkhoff
normal forms to classical Birkhoff normal forms as $\hb\to0$ for
perturbations of harmonic oscillators.
To our knowledge, the latter result, valid for arbitrary frequencies, is new and generalizes earlier ones
\cite{GP1} \cite{DGH}, which required a Diophantine condition.
These applications, though more specialized than the main general
results, are written in a self-contained way so as to be (hopefully)
accessible to readers who are not specialists of the different domains
they cover.
\end{enumerate}

\newpage


\centerline{\Large\sc Main general results}
\addcontentsline{toc}{part}{\vspace{-2.1em} \\ \sc Main general results\vspace{.4em}}



\section{Normalization in complete filtered Lie algebras
  (Theorem~\ref{thmA})}


%
Throughout the article we use the notations
\[
\N = \{0,1,2,\ldots\},
\qquad \I = \sqrt{-1}.
\]

%
\begin{definition}    \label{DefCFLA}
A ``complete filtered Lie algebra'' is a Lie algebra $\big( \cL, [.\,,.]
\big)$ together with a sequence of subspaces
\[
\cL = \cL\ugeq 0 \supset\cL\ugeq1\supset\cL\ugeq2 \supset \ldots \qquad
\text{with $[\cL\ugeq m,\cL\ugeq n] \subset \cL\ugeq{m+n}$ for all $m,n\in\N$}
\]
(exhaustive decreasing filtration compatible with the Lie bracket) such that 
$\bigcap\cL\ugeq m=\{0\}$ (the filtration is separated)
and~$\cL$ is a complete metric space for the distance
$d(X,Y) \defeq 2^{-\ord(Y-X)}$,
where we denote by $\ord \col \cL\to\N\cup\{\infty\}$ the order
function associated with the filtration
(function characterized by 
$\ord(X)\geq m \Leftrightarrow X\in\cL\ugeq m$).
\end{definition}
%

The completeness assumption will be used as follows: 
given a set~$I$, a family $(Y_i)_{i\in I}$ of~$\cL$ is said to be
``formally summable'' if,
for any $m\in\N$, the set $\{\, i\in I \mid Y_i\notin\cL\ugeq m \,\}$ is finite;
one can then check that the support of this family is countable
(if not~$I$ itself) and that, for any exhaustion $(I_k)_{k\in\N}$ of
this support by finite sets, the sequence $\sum_{i\in I_k} Y_i$ is
Cauchy, with a limit which is independent of the exhaustion---this
common limit is simply denoted by $\sum_{i\in I} Y_i$.


Here is a simple and useful example of a formally summable series of operators in~$\cL$:
for any $Y\in\cL\ugeq1$ and $r\in\N$, the operator $(\ad_Y)^r$ maps~$\cL$
in~$\cL\ugeq r$, hence, for every $X\in\cL$, the series
$\ex^{\ad_Y}(X) \defeq \sum_{r=0}^\infty \frac{1}{r!} (\ad_Y)^r(X)$ is
formally summable in~$\cL$. 
This allows us to define the operator $\ex^{\ad_Y}$, which is a Lie
algebra automorphism because $\ad_Y$ is a Lie algebra derivation. 


Our first main result is
\begin{theorem}\label{thmA}
Let~$\kk$ be a field of characteristic zero.
There exist families of coefficients 
\be   \label{eqFBinkk}
F^{\lam_1,\ldots,\lam_r},\, G^{\lam_1,\ldots,\lam_r} \in \kk
\quad
\text{for $r\ge1$, $\lam_1,\ldots,\lam_r \in \kk$,}
\ee
explicitly computable by induction on $r$, which satisfy the following:
given a complete filtered Lie algebra~$\cL$ over~$\kk$ and $X_0\in\cL$,
given a set~$\cN$ and a formally summable family $(B_n)_{n\in\cN}$ of~$\cL$
such that each~$B_n$ has order~$\ge1$ and is an eigenvector of
$\ad_{X_0}$,
one has
\be\label{eqA1}
[X_0,Z]=0 ,
\qquad
\ex^{\ad_Y} \Big(X_0+\sum_{n\in\cN} B_n \Big) =X_0+Z,
\ee
where $Z,Y\in\cL\ugeq1$ are defined as the following
sums of formally summable families:
\begin{align}
\label{eqmouldexpZ}
Z &= \sum_{r\geq 1} \;\sum_{n_1,n_2,\ldots,n_r\in\cN}\,
\frac{1}{r} F^{\lam(n_1),\lam(n_2),\ldots,\lam(n_r)} 
[B_{n_r},[\ldots[B_{n_2},B_{n_1}]\ldots]]
\\[.6ex]
\label{eqmouldexpY}
Y &= \sum_{r\geq 1} \;\sum_{n_1,n_2,\ldots,n_r\in\cN}\,
\frac{1}{r} G^{\lam(n_1),\lam(n_2),\ldots,\lam(n_r)} 
[B_{n_r},[\ldots[B_{n_2},B_{n_1}]\ldots]]
\end{align}
with
\be \label{eqdeflam}
\lam \col \cN \to \kk,
\qquad
\lam(n) \defeq \text{eigenvalue of $B_n$}.
\ee
\end{theorem}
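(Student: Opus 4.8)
The plan is to translate the problem into Écalle's mould calculus, reduce it to the purely combinatorial Theorem~\ref{thmB} about the alternal solutions of one fixed mould equation, and then transfer the outcome back to an arbitrary complete filtered Lie algebra by naturality. The universal families \eqref{eqFBinkk} will simply \emph{be} a pair of (alternal) moulds supplied by Theorem~\ref{thmB}.

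First I would set up the dictionary. To a mould $M^\bullet=(M^{\lam_1,\ldots,\lam_r})_{r\ge1}$ with entries in~$\kk$ and to the data $(B_n)_{n\in\cN}$, associate the family
\[
\Big(\,\tfrac1r\,M^{\lam(n_1),\ldots,\lam(n_r)}\,[B_{n_r},[\ldots[B_{n_2},B_{n_1}]\ldots]]\,\Big)_{(n_1,\ldots,n_r)}.
\]
Since each $B_n$ has order $\ge1$, the left-nested bracket of $r$ of them has order $\ge r$, so this family is formally summable; write $S(M)\in\cL\ugeq1$ for its sum, so that the elements $Z,Y$ of the statement are $S(F),S(G)$. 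Two features of the map $M\mapsto S(M)$ drive everything. First, $\ad_{X_0}$ being a derivation and each $B_n$ an $\ad_{X_0}$-eigenvector, $\ad_{X_0}$ acts on $S(M)$ by multiplying the length-$r$ entry of~$M$ by $\lam_1+\cdots+\lam_r$; hence $[X_0,Z]=0$ will be \emph{automatic} as soon as one requires $F^{\lam_1,\ldots,\lam_r}=0$ whenever $\lam_1+\cdots+\lam_r\neq0$. Second, Écalle's identification of iterated Lie brackets with \emph{alternal} moulds (recalled from~\cite{mouldSN} in Section~\ref{secLMCpfThmA}) shows that, for an alternal $M$, $S(M)$ coincides with the contraction of $M$ against the \emph{associative} comould $B_{n_1}\cdots B_{n_r}$ in the completed enveloping algebra of the closed Lie subalgebra generated by the $B_n$ --- this is precisely where the normalising factor $\tfrac1r$, a Dynkin idempotent in disguise, enters. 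Consequently, bracketing and the maps $\ex^{\ad_{(\cdot)}}$ translate, on the algebra of alternal moulds, into mould multiplication and its associated $\exp^\times$ and $\ad^\times$.

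Next I would run the dictionary on $\ex^{\ad_Y}\big(X_0+\sum_nB_n\big)=X_0+Z$. Using the Lie-algebra identity $\ex^{\ad_Y}(X_0)=X_0-\tfrac{\ex^{\ad_Y}-1}{\ad_Y}(\ad_{X_0}Y)$ and, after lifting to the enveloping algebra, $\ex^{\ad_Y}(B)=\ex^{Y}B\,\ex^{-Y}$, matching mould entries word by word turns \eqref{eqA1} into one equation expressing $F$ through $G$: schematically $F=\exp^\times(G)\times\iota\times\exp^\times(-G)-\tfrac{\ex^{\ad^\times_G}-1}{\ad^\times_G}(G')$, where $\iota$ is the length-one mould $\iota^{\lam}\equiv1$ (whose contraction is $\sum_nB_n$) and $(G')^{\lam_1,\ldots,\lam_r}=(\lam_1+\cdots+\lam_r)\,G^{\lam_1,\ldots,\lam_r}$, together with the resonance constraint on~$F$ above. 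That this equation has solutions with $F,G$ alternal, that they are computable by induction on the length~$r$ (at each length the non-resonant part of the equation determines $G$ upon dividing by $\lam_1+\cdots+\lam_r\neq0$, while the resonant part of $F$ is read off from lower lengths), and a description of all of them, is exactly what Theorem~\ref{thmB} provides; this yields the families~\eqref{eqFBinkk}.

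Finally, naturality closes the argument: for an arbitrary $(\cL,X_0,(B_n))$ as in the statement, plug the universal moulds $F,G$ into \eqref{eqmouldexpZ}--\eqref{eqmouldexpY}, note that formal summability makes $Y,Z\in\cL\ugeq1$ meaningful, that $\ad_{X_0}Z=\sum_{r,\vec n}\tfrac1r(\lam(n_1)+\cdots+\lam(n_r))F^{\lam(n_1),\ldots,\lam(n_r)}[B_{n_r},[\ldots[B_{n_2},B_{n_1}]\ldots]]=0$ by the resonance constraint, and that $\ex^{\ad_Y}(X_0+\sum_nB_n)=X_0+Z$ follows from the mould equation via the dictionary. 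The hard part will be making this dictionary rigorous: $M\mapsto S(M)$ is \emph{not} injective (the iterated brackets obey Jacobi), so a mould identity is a priori stronger than the desired Lie identity, and one must first carry out the enveloping-algebra computation in the \emph{free} complete filtered Lie algebra on $\{B_n\}$ --- where $S$ is faithful on alternal moulds --- and only then push it forward along the continuous Lie-algebra morphism sending the free generators to the given $B_n$ and $X_0$ to $X_0$. That step, together with the verification that the resonance projection and the division by $\lam_1+\cdots+\lam_r$ preserve alternality (i.e.\ Theorem~\ref{thmB} itself), is where the real work lies; the remainder is bookkeeping.
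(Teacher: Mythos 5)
Your overall route is the paper's: encode the universal coefficients as a pair of alternal moulds, reduce \eqref{eqA1} to the mould equation \eqref{eqmouldFG} solved by Theorem~\ref{thmB}, and transfer back by contracting against the Lie comould. The gap lies precisely in the step you single out as ``the real work'', and your plan for it would not go through. First, the implication you actually need is: mould equation $\Rightarrow$ Lie identity; for this the non-injectivity of $M^\bul\mapsto S(M)$ is irrelevant---you never need to ``match mould entries word by word'', only to apply to both sides of \eqref{eqmouldFG} a map that converts mould operations into Lie operations. Second, the detour you propose to restore faithfulness---carrying out the computation in the free complete filtered Lie algebra on $\{B_n\}$ and pushing forward---cannot be performed as stated: $X_0$ does not belong to that free algebra, and adjoining it as an extra free generator destroys the relations $[X_0,B_n]=\lam(n)B_n$, which are exactly what identifies $\ad_{X_0}$ with $\na_\lam$ and what is needed to compute $\ex^{\ad_Y}X_0$; moreover the asserted faithfulness of the contraction on alternal moulds in the free setting is never justified (and is not needed anywhere).

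What is actually required---and what the paper establishes---are the homomorphism properties of the contraction in the given $\cL$ itself: taking for $\A$ any associative algebra containing $\cL$ (e.g.\ its universal enveloping algebra), one shows that on alternal moulds the contraction against the associative comould coincides with the $\tfrac1r$-weighted contraction against the Lie comould (Lemma~\ref{lemMouldExpAlft}), hence is a Lie-algebra anti-morphism (Proposition~\ref{propLieAntimAltfcL}) intertwining $\na_\lam$ with $\ad_{X_0}$, and satisfies the two exponential identities $\ex^{\ad_Y}(N^\bul\Bul)=\big(\ex^{-G^\bul}\times N^\bul\times\ex^{G^\bul}\big)\Bul$ and $\ex^{\ad_Y}X_0=X_0-\big(\ex^{-G^\bul}\times\na_\lam(\ex^{G^\bul})\big)\Bul$ (Propositions~\ref{propLieMCun}(ii) and~\ref{propLieMCdeux}(ii)), first for finite-support moulds and then for all moulds by continuity in the filtration topology. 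Applying these to the solution of \eqref{eqmouldFG}, pulled back via $\lam^*$ from the canonical case $\cN=\kk$, yields \eqref{eqA1} directly in $\cL$, with no universal object and no injectivity statement. Incidentally, since the contraction is an \emph{anti}-morphism, the conjugation in your schematic mould equation should read $\ex^{-G^\bul}\times I^\bul\times\ex^{G^\bul}$, i.e.\ \eqref{eqmouldFG} after left multiplication by $\ex^{-G^\bul}$.
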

%
The proof of Theorem~\ref{thmA} is in Section~\ref{secPfThmA}.

As we shall see, the families
$F^\bul = (F^{\lam_1,\ldots,\lam_r})$ 
and $G^\bul = (G^{\lam_1,\ldots,\lam_r})$ 
are not unique, but
$(F^\bul,G^\bul)$ is in one-to-one correspondence with an auxiliary
family called \emph{gauge generator},
which can be chosen arbitrarily among resonant
alternal moulds (see the definitions in Section~\ref{secDefMoulds}). 
We will see that, for any choice of the gauge generator, one has
$F^{\lam_1,\ldots,\lam_r}=0$ whenever $\lam_r+\dots+\lam_1\neq0$ and
\begin{multline}   \label{eqfacileF}
\lam_r(\lam_r+\lam_{r-1}) \cdots(\lam_r+\cdots+\lam_2) \neq0,
\ens \lam_r+\dots+\lam_1=0
\imp\\
F^{\lam_1,\ldots,\lam_r} = \frac{1}{\lam_r(\lam_r+\lam_{r-1}) 
\cdots(\lam_r+\cdots+\lam_2)}.
\end{multline}
The formulas are much more complicated when the denominator vanishes,
but there still is an explicit algorithm to compute every coefficient
$F^{\lam_1,\ldots,\lam_r}$ or $G^{\lam_1,\ldots,\lam_r}$ depending on the chosen
gauge generator:
see formulas \eqref{eqinducinit}--\eqref{eqGlogS} in Section~\ref{secDefMoulds}.


\begin{remark} 
We may accept~$0$ as an eigenvector, \ie some of the $B_n$'s may
vanish and $\lam(n)$ need not be specified for those values of~$n$.
Since the support of a summable family is at most countable, one can always
choose
\be   \label{eqcasecNNst}
\cN = \N^*
\ee
without loss of generality
(by numbering the support of $(B_n)$ and, if this support is finite,
setting $B_n=0$ for the extra values of~$n$).
On the other hand, one can decide to group together the
eigenvectors associated with the same eigenvalue and take for~$\cN$
the countable subset of~$k$ consisting of the eigenvalues which appear
in the problem,
in which case
\be   \label{eqcNincluskk}
\cN \subset \kk, \qquad
\lam(n)=n \ens\text{for $n\in\cN$}
\ee  
(this latter choice is the one of \cite{EV95}).
In this article we do not opt for any of these two choices and simply
consider a general eigenvalue map~\eqref{eqdeflam} with
arbitrary~$\cN$
(without assuming $B_n\neq0$ for each~$n$).
\end{remark}


\begin{remark}   \label{remexpadSB}
The factor~$\frac{1}{r}$ in~\eqref{eqmouldexpZ}--\eqref{eqmouldexpY}
is just a convenient normalization.
We shall see in Section~\ref{secPfRem} that the inner derivation $\ad_Y$ itself can be written
\begin{multline}   \label{eqadYGB}
\ad_Y = 
\sum_{r\geq 1} \;\sum_{n_1,n_2,\ldots,n_r\in\cN}\,
\frac{1}{r} G^{\lam(n_1),\lam(n_2),\ldots,\lam(n_r)} 
[\ad_{B_{n_r}},[\ldots[\ad_{B_{n_2}},\ad_{B_{n_1}}]\ldots]]
\\[1ex]
= \sum_{r\geq 1} \;\sum_{n_1,\ldots,n_r\in\cN}\,
G^{\lam(n_1),\cdots, \lam(n_r)} \ad_{B_{n_r}}\cdots \ad_{B_{n_1}}
\end{multline}
(no more factor $\frac{1}{r}$ in the last series!---note that in
general the individual composite operators
$\ad_{B_{n_r}}\cdots \ad_{B_{n_1}}$ are not derivations of~$\cL$).
We shall also define a family of coefficients~$S^\bul$ tightly related
to~$G^\bul$ such that 
\be   \label{eqexpadYSB}
\ex^{\ad_Y} = \ID +
\sum_{r\geq 1} \;\sum_{n_1,\ldots,n_r\in\cN}\,
S^{\lam(n_1),\cdots, \lam(n_r)} \ad_{B_{n_r}}\cdots \ad_{B_{n_1}}.
\ee
\end{remark}


\begin{remark}   \label{rem:othersolns}
If $Z,Y\in\cL\ugeq1$ solve equation~\eqref{eqA1}, then any $W\in\cL\ugeq1$
such that $[X_0,W]=0$ gives rise to a solution $(\ti Z,\ti Y)$ by
setting $\ti Z\defeq \ex^{\ad_W} Z$ and
$\ti Y \defeq \BCH(W,Y) = W+Y+\dem[W,Y]+\cdots$,
the Baker-Campbell-Hausdorff series, which is formally summable and satisfies
$\ex^{\ad_{\ti Y}} = \ex^{\ad_W} \ex^{\ad_Y}$.

In Section~\ref{secCBNF}, we shall see an example in which~$Z$ is
unique but~$Y$ is not.
\end{remark}


We conclude this section with a ``truncated version'' of Theorem~\ref{thmA}:
\begin{addend}
Take $\cL$, $X_0$, $(B_n)_{n\in\cN}$ and $\lam\col\cN\to\kk$ as in the
assumptions of Theorem~\ref{thmA}.
Then, for each $m\in\N^*$,
the set $\cN_m \defeq \{\, n\in\cN \mid B_n\notin\cL\ugeq m\}$ is
finite and the finite sums
\begin{align}
\label{eq:defZm}
\tr{Z}{m} &\defeq \sum_{r= 1}^{m-1} \,\sum_{n_1,\ldots,n_r\in\cN_m}
\frac{1}{r} F^{\lam(n_1),\ldots,\lam(n_r)} 
[B_{n_r},[\ldots[B_{n_2},B_{n_1}]\ldots]],
\\[.6ex]
\label{eq:defYm}
\tr{Y}{m} &\defeq\sum_{r=1}^{m-1} \, \sum_{n_1,\ldots,n_r\in\cN_m}
\frac{1}{r} G^{\lam(n_1),\ldots,\lam(n_r)} 
[B_{n_r},[\ldots[B_{n_2},B_{n_1}]\ldots]]
\end{align}
define $\tr{Z}{m}, \tr{Y}{m}\in\cL\ugeq1$ satisfying
$[X_0,\tr{Z}{m}]=0$ and
\be\label{eqA2}
\ex^{\ad_{\tr{Y}{m}}} \Big(X_0+\sum_{n\in\cN}B_n\Big)
=X_0 + \tr{Z}{m} \mod \cL\ugeq m.
\ee
\end{addend}

The proof is in Section~\ref{secPfAddend}.

\section{The mould equation and its solutions
  (Theorem~\ref{thmB})}   \label{secDefMoulds}


%
We now describe the part of \'Ecalle's mould formalism which will allow us to
construct the aforementioned families of coefficients. 
This will lead us to an equation, of which we will describe
all solutions.

\parage
Let~$\kk$ a field and $\cN$ a nonempty set, considered as an
alphabet. We denote by~$\UN$ the corresponding free monoid, whose
elements are called \emph{words},
\[
\UN \defeq \{\, \un = n_1\cdots n_r \mid 
r\in\N,\; n_1,\ldots,n_r\in\cN \,\}.
\]
The monoid law is word \emph{concatenation}: 
$\ua \, \ub = a_1\cdots a_r b_1 \cdots b_s$ for $\ua = a_1\cdots a_r$ and
$\ub = b_1 \cdots b_s$.
Its unit is the empty word, denoted by~$\est$, the only word of length~$0$.
The length of a word~$\un$ is denoted by $r(\un)$.
(Given $r\in\N$, we sometimes identify the set of all words of
length~$r$ with~$\cN^r$.)


We call \emph{mould} any map $\UN \to \kk$.
It is customary to denote the value of the mould on a word~$\un$ by
affixing~$\un$ as an upper index to the symbol representing the mould,
and to refer to the mould itself by using a big dot as upper index;
hence $M^\bul$ is the mould, the value of which at~$\un$ is denoted by
$M^\un$.

For example, the families of coefficients $F^\bul,G^\bul$ referred to
in Theorem~\ref{thmA} can be considered as moulds, taking $\cN=\kk$ as
alphabet. For that reason, from now on, we will write
$F^{\lam_1\cdots\lam_r}$ and $G^{\lam_1\cdots\lam_r}$ to denote the
individual coefficients rather than
$F^{\lam_1,\ldots,\lam_r}$ or $G^{\lam_1,\ldots,\lam_r}$
as in~\eqref{eqFBinkk}.

The set $\kk^\UN$ of all moulds is clearly a linear space over~$\kk$. It is
also an associative $\kk$-algebra (usually not commutative): 
\emph{mould multiplication} is induced by word concatenation,
\be   \label{eqdefmouldmult}
P^\bul = M^\bul \times N^\bul 
\ens \text{is defined by} \ens 
\un \in \UN \mapsto P^\un \defeq 
\sum_{\un = \ua \, \ub} M^\ua N^\ub
\ee
(summation over all pairs of words $(\ua,\ub)$ such that 
$\un = \ua \, \ub$, including $(\un,\est)$ and $(\est,\un)$, thus
there are $r(\un)+1$ terms in the sum).\footnote{\label{ftn:dualkUN}
  The linear space $\kk^\UN$ can be identified with the dual of
  $\kUN$, the $\kk$-vector space consisting of all linear combinations
  of words (formal sums of the form $\sum x_\un\, \un$, with finitely
  many nonzero coefficients $x_\un\in\kk$): 
  the mould $M^\bul$ gives rise to the linear form
  $x\in\kUN\mapsto M^\bul(x)\in\kk$ defined by
  $M^\bul(\sum x_\un\, \un) = \sum x_\un M^\un$.
  The associative algebra structure on $\kk^\UN$ is then dual to the
  coalgebra structure induced on $\kUN$ by ``word deconcatenation'',
  for which the coproduct is
  $\De(\un) = \sum\limits_{\un=\ua\,\ub} \ua \otimes\ub$.
}
The multiplication unit is the elementary mould~$1^\bul$ defined by
$1^\est = 1$ and $1^\un = 0$ for $\un\neq\est$.
It is easy to see that a mould~$M^\bul$ is invertible if and only if
$M^\est\neq0$;
we then denote its multiplicative inverse by~$\inv M^\bul$.

The Lie algebra associated with the associative algebra $\kk^\UN$ will be
denoted $\LIE(\kk^\UN)$ (same underlying vector space, with bracketing
$[M^\bul,N^\bul] \defeq M^\bul\times N^\bul - N^\bul\times M^\bul$).


The order function $\ord \col \kk^\UN \to \N\cup\{\infty\}$ defined by 
\be   \label{eqdefordmould}
\ord(M^\bul)\geq m \quad\Leftrightarrow \quad
\text{ $M^\un=0$ whenever $r(\un)<m$ }
\ee
allows us to view~$\kk^{\UN}$ as a complete filtered associative
algebra (because the distance $d(M^\bul,N^\bul) \defeq 2^{-\ord(N^\bul-M^\bul)}$ makes it a complete
metric space and
$\ord(M^\bul\times N^\bul) \ge \ord(M^\bul)+\ord(N^\bul)$).
We can thus define the mutually inverse exponential and logarithm maps by the following summable series:
\[
M^\est = 0 \imp
\ex^{M^\bul} \defeq 1^\bul + 
\sum_{k\ge1} \tfrac{1}{k!} (M^\bul)^{\times k},
\quad
\log(1^\bul+M^\bul) \defeq \sum_{k\ge1} \tfrac{(-1)^{k-1}}{k} (M^\bul)^{\times k}.
\]


\parage   \label{secalternalmoulds}
\'Ecalle's notion of ``alternality'' is of fundamental importance.
Its motivation will be made clear in Section~\ref{secLieMouldExp}.
The idea is that, since in the situation of Theorem~\ref{thmA} we will
use a mould~$M^\bul$ as a family of coefficients to be multiplied by
iterated Lie brackets (as~$F^\bul$ in \eqref{eqmouldexpZ} or~$G^\bul$
in~\eqref{eqmouldexpY}), it is natural to impose some symmetry (or,
rather, antisymmetry) on the coefficients so as to take into account the
antisymmetry of the Lie bracket.
For instance, the sum over all two-letter words contains expressions
like
$\dem M^{n_1 n_2}[B_{n_2},B_{n_1}] + \dem M^{n_2 n_1}[B_{n_1},B_{n_2}]$,
which coincide with 
$\dem (M^{n_1 n_2}-M^{n_2 n_1})[B_{n_2},B_{n_1}]$,
%
%
so it is natural to impose
\be   \label{eqMaltlengthtwo}
M^{n_1 n_2} + M^{n_2 n_1} = 0
\quad\text{for all $n_1,n_2\in\cN$,}
\ee
so as to reduce to~$1$ the number of degrees of freedom associated
with the words~$n_1n_2$ and~$n_2n_1$.
Alternality is a generalisation of~\eqref{eqMaltlengthtwo} for all lengths~$\ge2$.


The definition of alternality is based on word shuffling.
Roughly speaking, the shuffling of two words~$\ua$ and~$\ub$ is the
set\footnote{or rather the sum---see footnote~\ref{ftn:shuff}}
of all words obtained by interdigitating the letters of~$\ua$
and~$\ub$ while preserving their internal order in~$\ua$ or~$\ub$;
the number of different ways a word~$\un$ can be obtained out of~$\ua$
and~$\ub$ is called shuffling coefficient.
We make this more precise by using permutations as follows.
For $r\in\N$, we let~$\fS_r$ (the symmetric group of degree~$r$)
act to the right on the set~$\cN^r$ of all words of length~$r$ by
\be   \label{eqrightactionfSr}
\un = n_1 \cdots n_r \mapsto \un^\tau \defeq n_{\tau(1)}\cdots
n_{\tau(r)}
\quad\text{for $\tau\in\fS_r$ and $\un\in\cN^r$.}
\ee
For $0\le \ell \le r$, we set
\[
\un^\tau\uleq\ell \defeq n_{\tau(1)} \cdots n_{\tau(\ell)},
\qquad
\un^\tau\usup\ell \defeq n_{\tau(\ell+1)} \cdots n_{\tau(r)}.
\]
%
%
We also define
\[
\fS_r(\ell) \defeq \{\, \tau \in \fS_r \mid
\tau(1) < \cdots < \tau(\ell) \ens\text{and}\ens
\tau(\ell+1) < \cdots < \tau(r) \,\},
\]
with the conventions $\fS_r(0) = \fS_r(r) = \{\id\}$.

\begin{definition}   \label{def:shufflcoeff}
Given $\un,\ua,\ub\in\UN$, the ``shuffling coefficient'' of~$\un$ in
$(\ua,\ub)$ is defined to be
\be   \label{eqdefshabn}
\shabn \defeq \card\{\, 
\tau \in \fS_r(\ell) \mid
\un^\tau\uleq\ell = \ua \ens\text{and}\ens \un^\tau\usup\ell = \ub
\,\}, \quad \text{where}\ens \ell\defeq r(\ua).
\ee
\end{definition}

For instance, if $n,m,p,q$ are four distinct elements of~$\cN$,
\[
\SH{nmp}{mq}{nmqpm} = 0, \qquad
\SH{nmp}{mq}{nmmqp} = 2, \qquad
\SH{nmp}{mq}{mnqmp} = 1.
\]


%
\begin{definition}    \lable{defalt}
A mould~$M^\bul$ is said to be ``alternal'' if
$M^\est=0$  and
\be   \label{eqdefMaltshabn}
\sum_{\un\in\UN} \shabn M^\un = 0 
\quad\text{for any two nonempty words $\ua, \ub$.}
\ee
\end{definition}
%

For instance, \eqref{eqdefMaltshabn} with $\ua = n_1$ and $\ub = n_2$
yields~\eqref{eqMaltlengthtwo} and, with $\ua = n_1$ and $\ub = n_1
n_2$, it yields
\[
2 M^{n_1 n_1 n_2} + M^{n_1 n_2 n_1} = 0.
\]
Notice that any mould whose support is contained in the set of
one-letter words is alternal; so is, in particular, the elementary
mould~$I^\bul$ defined by
\be   \label{eqdefmouldI}
I^\un \defeq \IND{r(\un)=1}
\qquad\text{for any word $\un$.}
\ee


We denote by $\Alt(\cN)$ the set of alternal moulds, which is clearly
a linear subspace of $\kk^\UN$; in fact,\footnote{\label{ftn:shuff}
Word shuffling gives rise to the ``shuffling product'', 
defined by
$\ua \shuff \ub \defeq 
\sum\limits_{\tau\in\fS_r(\ell)} (\ua\,\ub)^{\tau\ii}
= \sum \shabn\, \un \in \kUN$
for a pair of words such that $r(\ua)=\ell$ and $r(\ua\,\ub)=r$ and
extended to $\kUN \times \kUN$ by bilinearity,
which makes the space $\kUN$ of footnote~\ref{ftn:dualkUN} a commutative
associative algebra. 
Alternal moulds can then be identified with the infinitesimal
characters of the associative algebra $(\kUN,\shuff)$, \ie when viewed
as linear forms they are characterized by
$M^\bul(x\shuff y) = M^\bul(x) 1^\bul(y)  + 1^\bul(x) M^\bul(y)$.
In that point of view, $\Alt(\cN)$ is a Lie subalgebra of
$\LIE(\kk^\UN)$ because $\kUN$ is a bialgebra
(\ie there is some kind of compatibility between the deconcatenation
coproduct and the shuffling product---in fact, $\kUN$ is even a Hopf algebra).
}
\[ \text{\emph{$\Alt(\cN)$ is a Lie subalgebra of $\LIE(\kk^\UN)$}} \]
(see \eg \cite[Prop.~5.1]{mouldSN}); this will play a role when
returning to the situation of Theorem~\ref{thmA}.


\parage
Given a function $\ph \col \cN \to \kk$, we denote by the same
symbol~$\ph$ its extension to~$\UN$ as a monoid morphism:
$\ph(\est)=0$ and
\be   \label{eqextendph}
\un = n_1\cdots n_r \in \UN \mapsto 
\ph(\un) = \ph(n_1) + \cdots + \ph(n_r) \in \kk
\qquad\text{if $r\ge1$.}
\ee
The formula 
\be   \label{eqdefnaph}
\na_\ph \col M^\bul \in \kk^\UN \mapsto N^\bul \in \kk^\UN, \qquad
N^\un \defeq \ph(\un) M^\un \quad\text{for any word~$\un$}
\ee
then defines a derivation of the associative algebra
$\kk^\UN$ (the Leibniz rule for $\na_\ph$ is an obvious consequence of the identity $\ph(\ua\,\ub) = \ph(\ua)+\ph(\ub)$).
For example, associated with the constant function $\ph(n)\equiv 1$ is
the derivation $\na_1$, which spells
\[
\na_1 M^\un = r(\un)M^\un
\qquad\text{for any $M\in\kk^\UN$ and $\un\in\UN$.}
\]

In the situation of Theorem~\ref{thmA}, the derivation~$\na_\lam$ associated with the
map~\eqref{eqdeflam} will play a pre-eminent role. We shall need the following

\begin{definition}   \label{deflamrespart}
Given a map $\lam \col \cN \to \kk$, we call
``$\lam$-resonant'' any mould~$M^\bul$ such that $\na_\lam M^\bul=0$
and use the notation
\[
\Alt\RES(\cN) \defeq \{\, M^\bul \in \Alt(\cN) \mid \na_\lam M^\bul =
0 \,\}.
\]
The ``$\lam$-resonant part'' of a mould~$M^\bul$ is denoted
by~$M^\bul\RES$ and defined by the formula
\[
M^\un\RES \defeq \IND{\lam(\un) = 0} \, M^\un
\qquad\text{for any word $\un$.}
\]
The ``gauge generator'' of an alternal mould~$M^\bul$ is defined as
\[
\JJ(M^\bul) \defeq \lres{ \ex^{-M^\bul} \times \na_1\big( \ex^{M^\bul} \big) }.
\]
\end{definition}


Note that the space $\Alt\RES(\cN)$ of all $\lam$-resonant alternal
moulds is a Lie subalgebra of $\Alt(\cN)$ (being the kernel of a derivation).
Clearly, the $\lam$-resonant part of a mould is $\lam$-resonant; a
mould~$M^\bul$ is $\lam$-resonant if and only if $M^\bul = M^\bul\RES$
or, equivalently, if and only if $M^\un=0$ whenever $\lam(\un)\neq0$.
We shall see later that the gauge generator of an alternal mould is
always alternal and, in fact, $\Alt\RES(\cN)$ coincides with the set
of all gauge generators of alternal moulds.

It is worth singling out the particular case of an alphabet contained
in~$\kk$:
\begin{definition}   \label{def:incluscase}
If $\cN\subset \kk$ and $\lam\col\cN\to\kk$ is the inclusion map, then
we use the word ``resonant'' instead of $\lam$-resonant, and we use the
notations
$\na$,
$\Alt_0(\cN)$,
$M^\bul_0$
and $\JJz(M^\bul)$
instead of~$\na_\lam$, 
$\Alt\RES(\cN)$,
$M^\bul\RES$,
and $\JJ(M^\bul)$.
\end{definition}



\parage
We are now in a position to state our second main result, describing all the
solutions to a certain mould equation, equation~\eqref{eqmouldFG}
below.
This result, while being of interest in itself, will yield the
main step in the proof of Theorem~\ref{thmA}.
Recall that~$I^\bul$ is the alternal mould defined by~\eqref{eqdefmouldI}.


\begin{theorem}\label{thmB}
Let $\cN$ be a nonempty set, $\kk$ a field of characteristic zero,
and $\lam \col \cN \to \kk$ a map.


\noindent (i) 
For every $A^\bul \in \Alt\RES(\cN)$,
there exists a unique pair $(F^\bul, G^\bul)$ of alternal moulds such
that
\begin{gather}
\label{eqmouldFG}
\na_\lam F^\bul = 0, \qquad
\na_\lam\big(\ex^{G^\bul}\big) = I^\bul \times \ex^{G^\bul} - \ex^{G^\bul} \times F^\bul,
\\[1ex]
\label{eqjaugeGA}
\JJ(G^\bul) =A^\bul.
\end{gather}


\noindent
(ii) Suppose that $(F^\bul, G^\bul) \in \Alt(\cN)\times\Alt(\cN)$ is a
solution to equation~\eqref{eqmouldFG}.
Then the formula
\be    \label{eqgenergaugetrsf}
J^\bul \mapsto (\ti F^\bul, \ti G^\bul) = 
\Big( \ex^{-J^\bul} \times F^\bul \times \ex^{J^\bul},
\, \log\big( \ex^{G^\bul}\times \ex^{J^\bul} \big) \Big)
\ee
establishes a one-to-one correspondence between $\Alt\RES(\cN)$
%
%
and the set of all solutions $(\ti F^\bul, \ti G^\bul) \in \Alt(\cN)\times\Alt(\cN)$
of equation~\eqref{eqmouldFG}.
Moreover,
\be   \label{eqnewgauge}
\JJ(\ti G^\bul) = \ex^{-J^\bul} \times \JJ(G^\bul) \times \ex^{J^\bul}
+ \ex^{-J^\bul} \times \na_1 \big(\ex^{J^\bul}\big).
\ee
\end{theorem}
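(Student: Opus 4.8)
The plan is to first establish part~(i), then derive part~(ii) as a statement about the gauge transformation group acting on solutions. For part~(i), I would begin by studying the \emph{mould equation}~\eqref{eqmouldFG} degree by degree. Writing $F^\bul$ and $G^\bul$ as sums of their homogeneous components $F^\bul = \sum_{r\ge1} F^\bul_{(r)}$ (and likewise for $G^\bul$), the second equation in~\eqref{eqmouldFG}, after expanding $\ex^{G^\bul}$ as a mould series, becomes a hierarchy of equations in which the length-$r$ part determines $F^\bul_{(r)}$ and $G^\bul_{(r)}$ in terms of lower-order data plus the prescribed gauge $A^\bul$. Concretely, applying $\na_\lam$ to $\ex^{G^\bul}$ and using $\na_\lam I^\bul = 0$ together with the Leibniz rule, the length-$r$ identity reads, schematically, $\na_\lam G^\bul_{(r)} = (\text{terms built from }I^\bul, F^\bul_{\le r}, G^\bul_{<r})$. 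The $\lam$-resonant constraint $\na_\lam F^\bul = 0$ forces $F^\un = 0$ whenever $\lam(\un)\ne0$, so on words with $\lam(\un)\ne0$ the equation can be divided by the scalar $\lam(\un)\ne0$ to solve for $G^\un$ uniquely; on words with $\lam(\un)=0$ the term $\na_\lam G^\un$ vanishes, the equation pins down $F^\un$ (its resonant part, consistent with $\na_\lam F^\bul=0$), and the remaining freedom in $G^\un$ on resonant words is exactly what the gauge condition~\eqref{eqjaugeGA}, $\JJ(G^\bul) = A^\bul$, removes. I would make this induction precise, checking at each step that the constructed $F^\bul_{(r)}, G^\bul_{(r)}$ are \emph{alternal}: this is where the fact that $\Alt(\cN)$ is a Lie subalgebra of $\LIE(\kk^\UN)$ (stated in Section~\ref{secalternalmoulds}) and that $I^\bul$ is alternal will be used, together with the observation that $\na_\lam$ and $\na_1$ preserve alternality (they are derivations induced by monoid morphisms, so they are compatible with the shuffle characterization of alternality). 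The uniqueness is built into the inductive construction: at each length the resonant freedom is killed by the gauge condition and the non-resonant part is forced.

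For part~(ii), I would interpret formula~\eqref{eqgenergaugetrsf} as the action of a group. Given a solution $(F^\bul, G^\bul)$ and any $J^\bul \in \Alt\RES(\cN)$, set $\ti F^\bul = \ex^{-J^\bul} \times F^\bul \times \ex^{J^\bul}$ and $\ti G^\bul = \log(\ex^{G^\bul} \times \ex^{J^\bul})$, so that $\ex^{\ti G^\bul} = \ex^{G^\bul} \times \ex^{J^\bul}$. First, $\ti F^\bul$ is alternal because it is the image of $F^\bul$ under conjugation by the grouplike element $\ex^{J^\bul}$ (equivalently, $\Ad_{\ex^{J^\bul}}$ is a Lie algebra automorphism of $\LIE(\kk^\UN)$ preserving $\Alt(\cN)$ since $J^\bul \in \Alt(\cN)$), and $\ti G^\bul$ is alternal by the Baker--Campbell--Hausdorff formula, $\ti G^\bul = \BCH(G^\bul, J^\bul)$, which lies in the Lie subalgebra generated by $G^\bul$ and $J^\bul$, hence in $\Alt(\cN)$. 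Next, I would verify that $(\ti F^\bul, \ti G^\bul)$ again solves~\eqref{eqmouldFG}. The first equation $\na_\lam \ti F^\bul = 0$ follows because $\na_\lam$ is a derivation and $\na_\lam \ex^{\pm J^\bul} = 0$ (as $J^\bul$ is $\lam$-resonant), so $\na_\lam \ti F^\bul = \ex^{-J^\bul}\times(\na_\lam F^\bul)\times\ex^{J^\bul} = 0$. For the second equation, compute $\na_\lam(\ex^{\ti G^\bul}) = \na_\lam(\ex^{G^\bul}\times\ex^{J^\bul}) = (\na_\lam \ex^{G^\bul})\times\ex^{J^\bul} + \ex^{G^\bul}\times(\na_\lam \ex^{J^\bul}) = (\na_\lam\ex^{G^\bul})\times\ex^{J^\bul}$, and substituting the equation for $\ex^{G^\bul}$ gives $\big(I^\bul\times\ex^{G^\bul} - \ex^{G^\bul}\times F^\bul\big)\times\ex^{J^\bul} = I^\bul\times\ex^{\ti G^\bul} - \ex^{G^\bul}\times\ex^{J^\bul}\times(\ex^{-J^\bul}\times F^\bul\times\ex^{J^\bul}) = I^\bul\times\ex^{\ti G^\bul} - \ex^{\ti G^\bul}\times\ti F^\bul$, as required. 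Surjectivity and injectivity of the correspondence: given any other solution $(\ti F^\bul, \ti G^\bul)$, I would set $\ex^{J^\bul} \defeq \inv{(\ex^{G^\bul})} \times \ex^{\ti G^\bul}$, equivalently $J^\bul = \log(\ex^{-G^\bul}\times\ex^{\ti G^\bul}) = \BCH(-G^\bul, \ti G^\bul) \in \Alt(\cN)$, and check that $\na_\lam \ex^{J^\bul} = 0$ — this is exactly where the two mould equations for $(F^\bul,G^\bul)$ and $(\ti F^\bul,\ti G^\bul)$ combine: expanding $\na_\lam(\ex^{-G^\bul}\times\ex^{\ti G^\bul})$ and using both equations shows the derivative vanishes, hence $J^\bul$ is $\lam$-resonant, $J^\bul \in \Alt\RES(\cN)$; and by construction this $J^\bul$ maps $(F^\bul, G^\bul)$ to $(\ti F^\bul, \ti G^\bul)$. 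Injectivity is immediate since $J^\bul$ is recovered from $\ti G^\bul$ via $\ex^{J^\bul} = \inv{(\ex^{G^\bul})}\times\ex^{\ti G^\bul}$.

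Finally, for the gauge-transformation formula~\eqref{eqnewgauge}, I would compute $\JJ(\ti G^\bul) = \lres{\ex^{-\ti G^\bul}\times\na_1(\ex^{\ti G^\bul})}$ directly from $\ex^{\ti G^\bul} = \ex^{G^\bul}\times\ex^{J^\bul}$. By the Leibniz rule for the derivation $\na_1$, $\na_1(\ex^{\ti G^\bul}) = \na_1(\ex^{G^\bul})\times\ex^{J^\bul} + \ex^{G^\bul}\times\na_1(\ex^{J^\bul})$, so $\ex^{-\ti G^\bul}\times\na_1(\ex^{\ti G^\bul}) = \ex^{-J^\bul}\times\big(\ex^{-G^\bul}\times\na_1(\ex^{G^\bul})\big)\times\ex^{J^\bul} + \ex^{-J^\bul}\times\na_1(\ex^{J^\bul})$. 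Taking the $\lam$-resonant part $\lres{\cdot}$ commutes with conjugation by $\ex^{\pm J^\bul}$ (since those are $\lam$-resonant) and leaves the second summand — whose argument $\ex^{-J^\bul}\times\na_1(\ex^{J^\bul})$ need not be $\lam$-resonant a priori, but one checks it is: $J^\bul$ resonant implies $\na_1 \ex^{J^\bul}$ and the whole product lie in a $\na_\lam$-stable subalgebra — actually the cleanest route is to note $\lres{\cdot}$ applied to the second summand gives it back because $\na_\lam$ annihilates everything in sight. Combining, $\JJ(\ti G^\bul) = \ex^{-J^\bul}\times\JJ(G^\bul)\times\ex^{J^\bul} + \ex^{-J^\bul}\times\na_1(\ex^{J^\bul})$, which is~\eqref{eqnewgauge}.

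I expect the \textbf{main obstacle} to be the inductive construction in part~(i): one must simultaneously (a) solve the length-$r$ equation for the non-resonant entries by dividing by $\lam(\un)$, (b) identify the resonant entries of $F^\bul$ from the same equation and verify they are consistent with the resonant entries of $G^\bul$ that remain free, (c) use the gauge condition $\JJ(G^\bul) = A^\bul$ to pin down those free resonant entries — and this requires understanding precisely how the length-$r$ part of $\JJ(G^\bul) = \lres{\ex^{-G^\bul}\times\na_1(\ex^{G^\bul})}$ depends on $G^\un$ for resonant words $\un$ of length $r$ (it depends on $r(\un) = r$ times $G^\un$ plus lower-order terms, by the $\na_1 M^\un = r(\un) M^\un$ formula, so the leading coefficient is $r \ne 0$ and the equation is invertible) — and (d) check alternality of the result at each step. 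Keeping these four threads coordinated, and in particular proving alternality is preserved (which amounts to checking that the right-hand sides of the recursions stay in the Lie subalgebra $\Alt(\cN)$, using that mould multiplication of $\ex^{G^\bul}$-type series interacts correctly with the shuffle coproduct), is the technically delicate heart of the argument; the dimould machinery announced in the introduction is presumably the tool that organizes this bookkeeping.
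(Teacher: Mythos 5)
Your plan for part~(ii) and for the existence--uniqueness induction in part~(i) follows essentially the same route as the paper, but it has two genuine gaps. The decisive one is in part~(i): you never prove that the inductively constructed pair is alternal, and you explicitly defer this to ``dimould machinery, presumably.'' This is exactly the non-trivial point of the theorem. The recursion determines $F^\un$ and $S^\un$ (with $S^\bul=\ex^{G^\bul}$) word by word through \emph{concatenation} decompositions $\un=\ua\,\ub$, whereas alternality is a family of linear relations coupling all the words occurring in a shuffle $\ua\shuff\ub$; there is no direct way to see that ``the right-hand sides of the recursions stay in the Lie subalgebra $\Alt(\cN)$,'' and the paper does not attempt such a term-by-term check. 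Instead it introduces the auxiliary mould $N^\bul=\inv S^\bul\times\na_1 S^\bul$, rewrites \eqref{eqmouldFG}--\eqref{eqjaugeGA} as the system $\na_\lam S^\bul=I^\bul\times S^\bul-S^\bul\times F^\bul$, $\na_1 S^\bul=S^\bul\times N^\bul$, $\na_\lam F^\bul=0$, $N^\bul\RES=A^\bul$, solves it uniquely by induction, and then applies the algebra morphism $\De\col\kk^\UN\to\kUNUN$ to the \emph{whole system}: the resulting dimould system again has a unique solution with $S^{\est,\est}=1$, and the decomposable candidate $\big(S^\bul\otimes S^\bul,\;F^\bul\otimes 1^\bul+1^\bul\otimes F^\bul,\;N^\bul\otimes 1^\bul+1^\bul\otimes N^\bul\big)$ is checked to solve it, using that $A^\bul$ is alternal (primitive). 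Uniqueness then forces $\De(S^\bul)=S^\bul\otimes S^\bul$ and $\De(F^\bul)=F^\bul\otimes1^\bul+1^\bul\otimes F^\bul$, i.e.\ symmetrality and alternality. Some argument of this kind is indispensable; without it part~(i) is not proved.

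The second gap is in the surjectivity step of part~(ii). Setting $K^\bul\defeq\inv{(\ex^{G^\bul})}\times\ex^{\ti G^\bul}$, your claim that expanding $\na_\lam K^\bul$ and using the two mould equations ``shows the derivative vanishes'' is not correct: the computation yields the intertwining relation $\na_\lam K^\bul=F^\bul\times K^\bul-K^\bul\times\ti F^\bul$, whose right-hand side is not identically zero. To conclude that $K^\bul$ is $\lam$-resonant one needs an additional induction on word length (the paper's Lemma~\ref{lem:implyPres}): since $F^\bul$ and $\ti F^\bul$ are $\lam$-resonant and vanish on the empty word, evaluating the relation on a word $\un$ with $\lam(\un)\neq0$ and using the inductive hypothesis shows $\lam(\un)K^\un=0$. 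This is short but genuinely extra; as written your argument has a hole at precisely this point. The rest of part~(ii) --- alternality of $\ti F^\bul$ and $\ti G^\bul$, the verification that the transformed pair solves \eqref{eqmouldFG}, and the derivation of \eqref{eqnewgauge} (your informal observation that $\ex^{-J^\bul}\times\na_1(\ex^{J^\bul})$ is $\lam$-resonant is the content of Lemma~\ref{lemelemptyres}) --- matches the paper's proof.
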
 


The proof of Theorem~\ref{thmB} is given in
Section~\ref{secpfThmB}. 
It is constructive in the sense that we will obtain the following
simple algorithm to compute the values of~$F^\bul$ and
$S^\bul \defeq \ex^{G^\bul}$ on any word~$\un$ by induction on its
length $r(\un)$:
introducing an auxiliary alternal mould~$N^\bul$,
%
%
one must take 
\be   \label{eqinducinit}
S^\est=1, \qquad F^\est = N^\est = 0
\ee
and, for $r(\un)\ge1$,
\begin{align}
\label{eqinducNR}
\lam(\un) &\neq0 \imp &
F^\un &= 0, \quad 
S^\un = \frac{1}{\lam(\un)} \Big( S^{`\un} -
\sst\sum_{\un=\ua\,\ub}
S^\ua \, F^\ub \Big), \quad 
N^\un = r(\un)\,S^\un - \sst\sum_{\un=\ua\,\ub}
S^\ua \, N^\ub, \\[1.5ex]
\label{eqinducRES}
\lam(\un)&=0 \imp &
F^\un &= S^{`\un} -
\sst\sum_{\un=\ua\,\ub}
S^\ua \, F^\ub, \quad 
S^\un = \frac{1}{r(\un)} \Big( A^\un + 
\sst\sum_{\un=\ua\,\ub}
S^\ua \, N^\ub \Big), \quad 
N^\un = A^\un,
\end{align}
where we have used the notation
$`\un \defeq n_2\cdots n_r$ for $\un = n_1 n_2\cdots n_r$
and the symbol $\sst\sum$ indicates summation over non-trivial decompositions
(\ie $\ua,\ub\neq\est$ in the above sums);
we will see that the mould~$F^\bul$ thus inductively defined is alternal
and that
\be   \label{eqGlogS}
G^\est = 0, \qquad
G^\un = \sum_{k=1}^{r(\un)} 
\frac{(-1)^{k-1}}{k}
\, \sst\sum_{\un=\ua^1\cdots\ua^k} \,
S^{\ua^1}\cdots S^{\ua^k}
\quad\text{for $\un\neq\est$}
\ee
then defines the alternal mould~$G^\bul$ which solves~\eqref{eqmouldFG}--\eqref{eqjaugeGA}.


\parage
A few remarks are in order.


\paraga    \label{paragaphcNcM}
Given alphabets~$\cM$ and~$\cN$, any map $\ph \col \cN \to \cM$ induces a map
$\ph^* \col M^\bul \in \kk^\UM \mapsto M_\ph^\bul \in \kk^\UN$ defined by
$M_\ph^{n_1\cdots n_r} \defeq M^{\ph(n_1)\cdots \ph(n_r)}$,
which is a morphism of associative algebras, mapping $\Alt(\cM)$ to
$\Alt(\cN)$ and satisfying $\na_{\mu\circ\ph}\circ\ph^* = \na_\mu$ for
any $\mu\col\cM\to\kk$.
Let $\lam \defeq \mu\circ\ph$; one can easily check that, if
$A^\bul \in \Alt\RRES(\cM)$, then the unique solution
$(F^\bul,G^\bul)\in\Alt(\cM)\times\Alt(\cM)$ of
\[
\na_\mu F^\bul = 0, \qquad
\na_\mu\big(\ex^{G^\bul}\big) = I^\bul \times \ex^{G^\bul} - \ex^{G^\bul} \times F^\bul
\]
such that $\JJm(G^\bul)=A^\bul$ is mapped by~$\ph^*$ to the unique
solution in $\Alt(\cN)\times\Alt(\cN)$ of~\eqref{eqmouldFG} with gauge
generator~$\ph^*(A^\bul)$.


\paraga    \label{paragaCanCase}
Let us call ``canonical case'' the case when $\cN=\kk$ and $\lam =$ the
identity map.
We shall see in Section~\ref{secPfThmA} that the moulds
$F^\bul, G^\bul \in \kkukk$ 
which are referred to in Theorem~\ref{thmA} and give rise to 
solutions $(Z,Y)$ of equation~\eqref{eqA1}
are the ones given by Theorem~\ref{thmB} in the canonical case with
arbitrary $A^\bul \in \Alt_0(\kk)$.
The mould~$S^\bul$ referred to in Remark~\ref{remexpadSB} is then $\ex^{G^\bul}$.

We shall see that, with the notations of Theorem~\ref{thmB}(ii),
any $J^\bul\in\Alt_0(\kk)$ gives rise to $W\in\cL\ugeq1$ such that
$[X_0,W]=0$ and the solution $(\ti Z,\ti Y)$ of~\eqref{eqA1} 
associated with $(\ti F^\bul, \ti G^\bul)$
is given by
$\ti Z = \ex^{\ad_W} Z$ and
$\ti Y = \BCH(W,Y)$,
in line with Remark~\ref{rem:othersolns}.


\paraga
In part~(i) of the statement, one may choose $A^\bul = 0$; this yields
for $(F^\bul,G^\bul)$ what we call the ``zero gauge solution of equation~\eqref{eqmouldFG}''.
In the canonical case, the zero gauge solution corresponds to what is
treated in \cite{EV95} under the name ``royal prenormal form''.
The rest of the statement and the whole proof given in Section~\ref{secpfThmB} are new.

As a consequence of the remark in Section~\ref{paragaphcNcM}, the
zero gauge solution in the general case $\lam \col \cN \to \kk$ is
obtained from the zero gauge solution in the canonical case by applying~$\lam^*$.


\paraga
Another possible normalization aimed at singling out a specific
solution of~\eqref{eqmouldFG} in $\Alt(\cN)\times\Alt(\cN)$ consists
in requiring $G^\bul\RES = 0$ (instead of requiring $\JJ(G^\bul)=0$).
There is a unique such solution and here is how one can see it.

According to the Baker-Campbell-Hausdorff formula, for arbitrary
$G^\bul,J^\bul \in \big( \kk^\UN \big)\ugeq1$ (\ie such that $G^\est=J^\est=0$),
we can write
\[
\log(\ex^{G^\bul}\times \ex^{J^\bul}) = G^\bul + J^\bul +
\cF(G^\bul,J^\bul),
\qquad
\cF(G^\bul,J^\bul) = \dem [G^\bul,J^\bul] + \cdots \in \big( \kk^\UN \big)\ugeq2,
\]
where the functional~$\cF$ satisfies
$\ord\big( \cF(G^\bul,\ti J^\bul)-\cF(G^\bul,J^\bul) \big) 
\ge \ord(\ti J^\bul-J^\bul) + 1$
for all $\ti J^\bul \in \big( \kk^\UN \big)\ugeq1$
(which is a contraction property for the distance mentioned right after~\eqref{eqdefordmould})
and preserves alternality.
Now, given a solution $(F^\bul, G^\bul) \in \Alt(\cN)\times\Alt(\cN)$
to equation~\eqref{eqmouldFG}, in view of part~(ii) of
Theorem~\ref{thmB}, we see that finding a 
solution $(\ti F^\bul,\ti G^\bul) \in \Alt(\cN)\times\Alt(\cN)$
of~\eqref{eqmouldFG} such that $\ti G^\bul\RES=0$ 
is equivalent to finding $J^\bul \in \Alt\RES(\cN)$ such that
\be   \label{eq:fixedptcF}
J^\bul = - G^\bul\RES - \big[ \cF(G^\bul,J^\bul) \big]\RES.
\ee
The fixed point equation~\eqref{eq:fixedptcF} has a unique
solution~$J^\bul$ in $\big( \kk^\UN \big)\ugeq1$ (because of the
contraction property), which is clearly $\lam$-resonant, and also
alternal (because~$\cF$ preserves alternality).
The uniqueness of the mould~$J^\bul$ entails that the solution $(\ti
F,\ti G)$ is unique (it does not depend on the auxiliary solution
$(F^\bul,G^\bul)$ we started with).


\paraga   \label{paragaSymMouldEq}
  ``Symmetral'' moulds can be defined as the elements of
\be    \label{defSymexpAlt}
\Sym(\cN) \defeq \{ \ex^{M^\bul} \mid M^\bul \in \Alt(\cN) \}
\ee
and $\big( \Sym(\cN), \times \big)$ is a group, in general
non-commutative
(see \eg \cite[Prop.~5.1]{mouldSN}; see also Remark~\ref{remdefsym} below).

Thus, using the change of unknown $S^\bul = \ex^{G^\bul}$, it is
equivalent to look for a solution $(F^\bul, G^\bul) \in \Alt(\cN)\times\Alt(\cN)$
of equation~\eqref{eqmouldFG} 
or for a solution $(F^\bul, S^\bul) \in \Alt(\cN)\times\Sym(\cN)$ of the
equation
\be    \label{eqmouldFS}
\na_\lam F^\bul = 0, \qquad
\na_\lam S^\bul = I^\bul \times S^\bul - S^\bul \times F^\bul,
\ee
and the gauge generator will then be
\be    \label{eqjaugeS}
\JJ(\log S^\bul) = \lres{ \inv S^\bul \times \na_1 S^\bul }.
\ee
This mould $S^\bul = \ex^{G^\bul}$ is the one which appears in the
algorithm \eqref{eqinducinit}--\eqref{eqinducRES};
there, $N^\bul$ is the auxiliary mould $N^\bul = \inv S^\bul \times
\na_1 S^\bul$.


\paraga
For any choice of $A^\bul \in \Alt\RES(\cN)$, from
\eqref{eqinducNR}--\eqref{eqinducRES}, one easily gets
\begin{multline}    \label{eqfacileS}
\lam(n_1n_2\cdots n_r)\lam(n_2\cdots n_r)\cdots \lam(n_r) \neq0
\imp\\
F^{n_i\cdots n_r} = 0 \ens\text{for $i = 1,\ldots, r$}
\ens\text{and}\ens
S^{n_1\cdots n_r} = \frac{1}{
\lam(n_1n_2\cdots n_r)\lam(n_2\cdots n_r)\cdots \lam(n_r)
},
\end{multline}
%
%
whence~\eqref{eqfacileF} follows by~\eqref{eqinducRES} and Section~\ref{paragaCanCase}.

Note that it may happen that $\lam(\un)\neq0$ for every nonempty
word~$\un$,
in which case $\Alt\RES(\cN) = \{0\}$ and there is only one solution
$(F^\bul,G^\bul)\in \Alt(\cN)\times\Alt(\cN)$ to
equation~\eqref{eqmouldFG}, namely $F^\bul=0$ 
and $G^\bul =$ logarithm of the mould~$S^\bul$ defined by~\eqref{eqfacileS}.

For instance, this is what happens if $\cN=\N^*$ (positive integers),
$\kk=\Q$ and $\lam =$ the inclusion map $\N^* \hookrightarrow \Q$.
%
%
Formula~\eqref{eqfacileS} then reads
\[
S^{n_1\cdots n_r} = \frac{1}{
(n_1+n_2+\cdots+ n_r)(n_2+\cdots+ n_r)\cdots n_r}.
\]
In that case, the Hopf algebra $\kUN$ evoked in
footnote~\ref{ftn:shuff} is the combinatorial Hopf algebra
$\operatorname{QSym}$ of ``quasi-symmetric functions'' and this
mould~$S^\bul$ is related to the so-called Dynkin Lie idempotent,
of which we thus get interesting generalisations by considering arbitrary
maps $\lam \col \N^* \to \Q$ and the corresponding symmetral moulds~$S^\bul$.

The canonical case defined in Section~\ref{paragaCanCase} is the opposite:
$\Alt_0(\kk)$ is huge. Choosing a resonant alternal mould~$A^\bul$ amounts to
choosing an arbitrary constant in~$\kk$ for $A^0$ (only possibly
nonzero value in length~$1$), an arbitrary odd
function $\kk\to\kk$ for $\lam_1 \mapsto A^{\lam_1(-\lam_1)}$ in length~$2$, etc.


\paraga    \label{paragaJaugeK}
The exponential map induces a bijection from $\Alt\RES(\cN)$ to the
set $\Sym\RES(\cN)$ consisting of all $\lam$-resonant symmetral
moulds, which is a subgroup of $\Sym(\cN)$.

According to part~(ii) of Theorem~\ref{thmB},
given a solution $(F^\bul, S^\bul) \in \Alt(\cN)\times\Sym(\cN)$ of~\eqref{eqmouldFS},
%
%
we thus have a bijection
\be    \label{eqgaugetrsfKS}
K^\bul \mapsto (\ti F^\bul, \ti S^\bul) = 
\big( \inv K^\bul \times F^\bul \times K^\bul,
\, S^\bul \times K^\bul \big) 
\ee
between $\Sym\RES(\cN)$ and the set of all solutions
$(\ti F^\bul, \ti S^\bul) \in \Alt(\cN)\times\Sym(\cN)$
of~\eqref{eqmouldFS}.
The map
\[
(F^\bul, S^\bul) \mapsto
\big( \inv K^\bul \times F^\bul \times K^\bul,
\, S^\bul \times K^\bul \big) 
\]
is called the ``gauge transformation'' associated with $K^\bul \in \Sym\RES(\cN)$.

The group $\Sym\RES(\cN)$ is called the ``gauge group'' of
equation~\eqref{eqmouldFS}; it acts to the right freely and
transitively by gauge transformations on the space of solutions
$\big\{ (F^\bul,S^\bul) \big\} \subset \Alt(\cN)\times\Sym(\cN)$.
Its effect on gauge generators is given by the formula
\be   \label{eqnewjaugeSK}
\JJ(\log\ti S^\bul) = \inv K^\bul \times \JJ(\log S^\bul) \times K^\bul
+ \inv K^\bul \times \na_1 K^\bul.
\ee


\paraga
The identities
\[
\ex^{-J^\bul} \times A^\bul \times \ex^{J^\bul}  = 
(\ex^{-\ad_{J^\bul}}) A^\bul = \sum_{k\ge0} \tfrac{(-1)^k}{k!} (\ad_{J^\bul})^k A^\bul,
\qquad 
\ex^{-J^\bul} \times \na_\lam(\ex^{J^\bul}) = 
\sum_{k\ge0} \tfrac{(-1)^k}{(k+1)!} (\ad_{J^\bul})^k \, \na_\lam J^\bul
\]
to be seen in Section~\ref{subsecMouldExpCFLA} (Propositions~\ref{propLieMCun}(ii) and~\ref{propLieMCdeux}(ii))
show that, for any alternal mould~$M^\bul$, the $\lam$-resonant mould
$\JJ(M^\bul)$ is alternal,
as claimed in the paragraph following Definition~\ref{deflamrespart},
and that the \rhs\ of~\eqref{eqnewgauge} or~\eqref{eqnewjaugeSK} is
indeed alternal and $\lam$-resonant
(by replacing~$\na_\lam$ with~$\na_1$ and observing that
$\Alt\RES(\cN)$ is invariant by~$\ad_{J^\bul}$ for $J^\bul\in\Alt\RES(\cN)$).

One can easily find the gauge transformation which maps the zero gauge
solution on any given solution:
if a given solution $(F^\bul, S^\bul) \in \Alt(\cN)\times\Sym(\cN)$
has gauge generator $A^\bul=\JJ(\log S^\bul)$, then one finds the
desired gauge transformation in terms of~$A^\bul$ by solving the
equation
\[
\na_1 K^\bul = K^\bul \times A^\bul
\]
inductively on word length with initial condition $K^\est=1$
(the unique solution $K^\bul \in \kk^\UN$ is clearly $\lam$-resonant
and it turns out that it is also symmetral).

\newpage

\centerline{\Large\sc Lie mould calculus}
\addcontentsline{toc}{part}{\vspace{-2.1em} \\ \sc Lie mould calculus\vspace{.4em}}


\section{Lie mould calculus and proof of  Theorem~\ref{thmA}}   \label{secLMCpfThmA}

\subsection{General setting}   \label{secGenSetting}
$ $\\[-2.75ex]

\noindent
%
Let us give ourselves a field~$\kk$ and a nonempty set~$\cN$,
so that we can consider the associative $\kk$-algebra~$\kk^\UN$ of
Section~\ref{secDefMoulds}.
We suppose that we are also given a Lie algebra~$\cL$ over~$\kk$ and a
family $(B_n)_{n\in\cN}$ of~$\cL$.

Let us consider an associative algebra~$\A$ over~$\kk$ such that~$\cL$
is a Lie subalgebra of~$\LIE(\A)$ (we denote by~$\LIE(\A)$ the Lie
algebra over~$\kk$ with the same underlying vector
space as~$\A$ and bracketing $[x,y] \defeq xy-yx$).
For instance, by the Poincar\'e-Birkhoff-Witt theorem, we may take
for~$\A$ the universal enveloping algebra of~$\cL$.

\begin{definition}
  The ``associative comould'' is the family
  $B_\bul = (B_\un)_{\un\in\UN}$ defined by
\[
B_\un \defeq B_{n_r} \cdots B_{n_1}  \in \A
\]
for any word $\un = n_1\cdots n_r$,
with the convention $B_{\est} \defeq 1_\A$.
The ``Lie comould'' is the family $\Bul = (\Bun)_{\un\in\UN}$ defined
by
$B_{[\est]} \defeq 0$ and
\[
\Bun \defeq \ad_{B_{n_r}} \circ \cdots \circ \ad_{B_{n_2}} B_{n_1} 
= [B_{n_r},[\ldots[B_{n_2},B_{n_1}]\ldots]] \in \cL
\]
for any nonempty word $\un = n_1\cdots n_r$, 
with the convention $B_{[n_1]}=B_{n_1}$ when $r=1$.
\end{definition}


Beware that in general, contrarily to the Lie comould, the associative
comould is not a family of~$\cL$, but only of~$\A$.
\'Ecalle's mould calculus (\cite{Eca81}, \cite{Eca93}, \cite{mouldSN})
deals with finite or infinite sums of the form $\sum M^\un B_\un$ in
the associative algebra~$\A$, with arbitrary moulds
$M^\bul \in \kk^\UN$.
In this article, we use the phrase ``Lie mould calculus'' when
restricting our attention to finite or infinite sums of the form
$\sum M^\un B_\un$ with \emph{alternal} moulds~$M^\bul$ because, as
will be shown in a moment, such expressions can be rewritten
$\sum \frac{1}{r(\un)} M^\un \Bun$ and thus belong to the Lie
algebra~$\cL$.


The shuffling coefficients of Definition~\ref{def:shufflcoeff} allow us to
express the Lie comould~$\Bul$ in terms of the associative
comould~$B_\bul$:

\begin{lemma}   \label{lemlimkLieAssComould}
For any nonempty word $\un \in \UN$,
\[
\Bun = \sum_{(\ua,\ub) \in \UN\times\UN} (-1)^{r(\ub)} r(\ua) 
\, \shabn \, B_{\,\wt\ub \, \ua},
\]
where, for an arbitrary word $\ub = b_1\cdots b_s$, we denote
by~$\wt\ub$ the reversed word: $\wt\ub = b_s \cdots b_1$.
\end{lemma}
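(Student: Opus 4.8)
The plan is to prove the identity by expressing the iterated Lie bracket $\Bun$ directly in terms of the associative comould, using the standard ``left-bracketing'' expansion of nested commutators. Concretely, I would first recall that for elements $B_{n_1},\dots,B_{n_r}$ of an associative algebra, the fully left-nested bracket $[B_{n_r},[\ldots[B_{n_2},B_{n_1}]\ldots]]$ expands, upon repeatedly writing $[u,v]=uv-vu$, into a signed sum of $2^{r-1}$ monomials; each monomial is obtained by choosing, for each of the $r-1$ brackets, whether the new letter $B_{n_{j+1}}$ goes on the left or the right of the block built so far. The sign of a monomial is $(-1)^{(\text{number of letters placed on the right})}$, and the resulting word always has $B_{n_1}$ somewhere in the middle with the letters that went left forming an increasing-index block to its left and the letters that went right forming a block to its right, read in a specific order.

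The key step is then to recognise this signed sum as exactly the right-hand side of the lemma. I would argue that, after collecting, a monomial $B_\un$ with $\un$ of length $r$ appears with coefficient $\sum (-1)^{r(\ub)} r(\ua)\,\shtiabn$ where the sum is over all ways $(\ua,\ub)$ of splitting the original letter sequence into the ``left part'' $\ua$ (containing $B_{n_1}$, i.e.\ the letters pushed to the left, together with $n_1$ as the last letter of $\ua$) and the ``right part'' $\ub$ (the letters pushed to the right), with the shuffling coefficient counting in how many ways the prescribed interleaving produces the target word, and with the extra factor $r(\ua)$ accounting for the position of $B_{n_1}$. More precisely, I would set this up by induction on $r$: the case $r=1$ is the convention $B_{[n_1]}=B_{n_1}$, and for the inductive step I would use $\Bun = \ad_{B_{n_r}}(B_{[n_1\cdots n_{r-1}]}) = B_{n_r} B_{[n_1\cdots n_{r-1}]} - B_{[n_1\cdots n_{r-1}]} B_{n_r}$, substitute the inductive formula, and check that prepending or appending $B_{n_r}$ corresponds precisely to the recursion satisfied by the shuffling coefficients together with the weights $(-1)^{r(\ub)} r(\ua)$.

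The main obstacle will be the bookkeeping of the combinatorial factor $r(\ua)$: one has to see why the number of left-pushed letters (equivalently the slot occupied by $B_{n_1}$) enters linearly, and why this is compatible with the inductive step when $B_{n_r}$ is prepended (which increases $r(\ua)$ by one and keeps $r(\ub)$) versus appended (which keeps $r(\ua)$ and increases $r(\ub)$ by one, hence flips a sign). I expect the cleanest way to handle this is to use the identity $r(\ua)\,\shabn$ as a single weight and verify the two-term recursion
\[
r(\ua)\,\SH{\ua}{\ub}{\un'} \;=\; (\text{contribution from }\ua=\ua'' n_r) \;+\; (\text{contribution from }\ub=\ub'' n_r),
\]
where on the left $\un'$ ranges over length-$r$ words and on the right the shorter words have length $r-1$; a short check of how $\fS_r(\ell)$ restricts to $\fS_{r-1}(\ell-1)$ or $\fS_{r-1}(\ell)$ according to where $\tau$ sends $r$ does the job. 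Alternatively, and perhaps more transparently for the reader, I could invoke the known dual statement — that the reversal-twisted comould identity is equivalent to the compatibility between the deconcatenation coproduct and the shuffle product on $\kUN$ (the Hopf-algebra structure recalled in footnote~\ref{ftn:shuff}) — but since the paper has chosen to avoid Hopf-algebraic language in the main text, I would favour the direct inductive computation.
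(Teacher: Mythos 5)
Your overall strategy is essentially the paper's: induction on the word length, peeling off the last letter via $\Bun = B_{n_r}B_{[n_1\cdots n_{r-1}]} - B_{[n_1\cdots n_{r-1}]}B_{n_r}$ and matching against the recursion of shuffling coefficients according to whether $n_r$ ends up as the last letter of $\ua$ or of $\ub$. But there is a genuine gap in the key step: the ``two-term recursion'' you propose to verify for the single weight $(-1)^{r(\ub)}\,r(\ua)\,\shabn$ is not an identity. Writing $\un=\un'\,n_r$ and splitting the target sum $\sum_{(\ua',\ub')}(-1)^{r(\ub')}r(\ua')\sh{\ua'}{\ub'}{\un}B_{\wt{\ub'}\,\ua'}$ according to whether $n_r$ is the last letter of $\ua'$ or of $\ub'$, the first case carries the weight $r(\ua'')+1$ rather than $r(\ua'')$ (where $\ua'=\ua''\,n_r$), so after inserting the inductive hypothesis you are left with the surplus term
\[
B_{n_r}\,\sum_{(\ua,\ub)\in\UN\times\UN}(-1)^{r(\ub)}\,\sh{\ua}{\ub}{\un'}\,B_{\wt\ub\,\ua},
\]
and the induction does not close unless you know that this \emph{unweighted} sum vanishes for every nonempty word $\un'$. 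That auxiliary vanishing statement is exactly the companion identity the paper proves \emph{simultaneously} with the weighted one (the first identity in~\eqref{eqinducshabn}); its own induction does close, because without the weight the two cases simply give $-(\cdot)B_{n_r}+B_{n_r}(\cdot)$ applied to a quantity that vanishes by the inductive hypothesis. Your proposal never identifies this companion identity, so as written the inductive step fails; once you add it (proving both statements together by the same last-letter induction) your argument becomes the paper's proof.

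A secondary issue: the heuristic identification in your first paragraph of the $2^{r-1}$ monomials of the left/right expansion of the nested bracket with the pairs $(\ua,\ub)$ of the lemma (with $n_1$ forced to be the last letter of $\ua$) is not correct. The lemma's sum runs over \emph{all} complementary pairs of subwords, including those with $n_1$ appearing in $\ub$, and individual coefficients do not match the expansion term by term: already for $r=2$ the pair $(\ua,\ub)=(n_1n_2,\est)$ contributes $2\,B_{n_2}B_{n_1}$ and the pair $(n_2,n_1)$ contributes $-\,B_{n_2}B_{n_1}$, which only after cancellation combine into the single expansion term $B_{n_2}B_{n_1}$. So the rigorous content of your argument rests entirely on the induction, where the gap above must be repaired.
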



\begin{proof}
Let us show by induction on~$r$ that
\be   \label{eqinducshabn}
\sum_{(\ua,\ub) \in \UN\times\UN} (-1)^{r(\ub)} 
\shabn \, B_{\,\wt\ub \, \ua}
= 0, \qquad
\sum_{(\ua,\ub) \in \UN\times\UN} (-1)^{r(\ub)} r(\ua) 
\, \shabn \, B_{\, \wt\ub \, \ua}
= \Bun
\ee
for any word~$\un$ of length $r\ge1$.
We denote the first sum by $\LHS\un$ and the second by $\LHSp\un$,
and observe that, as a consequence of~\eqref{eqdefshabn},
\be   \label{eqdefLHSLHSp}
\LHS\un = \sum_{\ell=0}^r \, \sum_{\tau\in\fS_r(\ell)} (-1)^{r-\ell}
B_{\,\wt{\phantom{n^\tau}}\hspace{-1em}\un^\tau\usup\ell \, \un^\tau\uleq\ell},
\qquad
\LHSp\un = \sum_{\ell=0}^r \, \sum_{\tau\in\fS_r(\ell)} (-1)^{r-\ell} \ell\,
B_{\,\wt{\phantom{n^\tau}}\hspace{-1em}\un^\tau\usup\ell \, \un^\tau\uleq\ell}
\ee

For $r=1$, we find $\LHS{n_1} = B_{n_1}-B_{n_1}=0$ 
and $\LHSp{n_1} = 1\cdot B_{n_1}-0\cdot B_{n_1}=B_{[n_1]}$.

Let us assume that $r\ge2$ and~\eqref{eqinducshabn} holds for any
word~$\un$ of length $r-1$.
Given an arbitrary word~$\um$ of length~$r$, we write it as 
$\um = \un \, c$, where $\un\in\cN^{r-1}$ and $c\in\cN$.
When using~\eqref{eqinducshabn} to compute $\LHS\um$ or
$\LHSp\um$, we see that the last letter of~$\um$ must either go
at the end of~$\ub$ or at the end of~$\ua$, or, more precisely, 
using~\eqref{eqdefLHSLHSp}, we see that $\fS_r(\ell)$ can be written
as a disjoint union
\[
\fS_r(\ell) = \fB \sqcup\fA, \qquad
\fB \defeq \{\, \tau\in \fS_r(\ell) \mid \tau(r) = r\,\}, \quad
\fA \defeq \{\, \tau\in \fS_r(\ell) \mid \tau(r) < r\,\}
\]
(note that $\tau\in\fA \;\Rightarrow\; 1\le \ell < r$ and $\tau(\ell)=r$),
and there are bijections $\tau\in\fB \mapsto \tau'\in\fS_{r-1}(\ell)$
and $\tau\in\fA \mapsto \tau^*\in\fS_{r-1}(\ell-1)$ (note that
$\fA$ is empty when $\ell=0$) so that
\[
\um^\tau\uleq\ell = \un^{\tau'}\uleq\ell
\ens\text{and}\ens
\um^\tau\usup\ell = \un^{\tau'}\usup\ell\, c
\ens\text{for $\tau\in\fB$}, 
\qquad
\um^\tau\uleq\ell = \un^{\tau^*}\uleq{\ell-1}\, c
\ens\text{and}\ens
\um^\tau\usup\ell = \un^{\tau^*}\usup{\ell-1}
\ens\text{for $\tau\in\fA$}
\]
(namely $\tau'(i)=\tau(i)$ for $1\le i \le r-1$, and
$\tau^*(i) = \tau(i)$ for $i\le \ell-1$
while $\tau^*(i) = \tau(i+1)$ for $\ell\le i \le r-1$).\footnote{%
Another way of seeing this is to consider the ``unshuffling
coproduct'' on the vector space~$\kUN$ of footnote~\ref{ftn:dualkUN}:
this is the linear map $\De \col \kUN \to \kUN\otimes\kUN$ determined
by
$\De(\un) = \sum \shabn \, \ua \otimes \ub$,
and the above property amounts to the inductive definition
$\De(\est)=0$ and $\De(\un \,c) = \De(\un) (\est\otimes c + c \otimes \est)$,
where we make use of the non-commutative associative ``concatenation product''
on~$\kUN$ or $\kUN\otimes\kUN$
(in fact, this gives rise to another Hopf algebra structure on~$\kUN$).
}
Therefore
\[
\LHS\um = \sum_{(\ua,\ub) \in \UN\times\UN} (-1)^{r(\ub\, c)} 
\shabn \, B_{c\,\wt\ub \, \ua}
+
\sum_{(\ua,\ub) \in \UN\times\UN} (-1)^{r(\ub)} 
\shabn \, B_{\,\wt\ub \, \ua\, c}
\]
and, since $B_{c\,\wt\ub \, \ua}=B_{\,\wt\ub \, \ua}B_c$ and 
$B_{\,\wt\ub \, \ua\, c} = B_c B_{\,\wt\ub \, \ua}$, we get
$\LHS\um = - \LHS\un B_c + B_c \LHS\un = 0$
by the induction hypothesis; on the other hand,
\begin{align*}
\LHSp\um &= \sum_{(\ua,\ub) \in \UN\times\UN} (-1)^{r(\ub\, c)}  r(\ua) \,
\shabn \, B_{c\,\wt\ub \, \ua}
+
\sum_{(\ua,\ub) \in \UN\times\UN} (-1)^{r(\ub)}  r(\ua\,c)\, 
\shabn \, B_{\,\wt\ub \, \ua\, c} \\[1ex]
&= - \LHSp\un B_c + B_c \big( \LHSp\un + \LHS\un \big)
= [ B_c,\Bun ] = B_{[\un\,c]} = \Bum.
\end{align*}
\end{proof}

\subsection{Finite mould expansions}   \label{secLieMouldExp}
$ $\\[-2.75ex]

\noindent
%
Let us denote by $\kkfUN$ the set of finite-support moulds, which is
clearly an associative subalgebra of $\kk^\UN$.
The finiteness condition allows us to define a map with values in~$\A$
by means of the associative comould~$B_\bul$:
\be    \label{eqdeffcontr}
M^\bul \in \kkfUN \mapsto 
M^\bul B_\bul \defeq
\sum_{\un\in\UN} M^\un \, B_\un \in \A.
\ee
Since $B_{\ua\,\ub} = B_\ub B_\ua$ for any two words $\ua,\ub$, it is
obvious that the map~\eqref{eqdeffcontr} is an associative algebra
anti-morphism, \ie
\be   \label{eqantimorphassalg}
(M^\bul \times N^\bul) B_\bul =
(N^\bul B_\bul) (M^\bul B_\bul)
\quad\text{for any $M^\bul,N^\bul \in \kkfUN$.}
\ee


We can also define a map with values in~$\cL$ by means of the Lie
comould~$\Bul$:
\be   \label{eqdefContrBul}
M^\bul \in \kkfUN \mapsto M^\bul \Bul \defeq 
\sum_{\un\neq\est} \tfrac{1}{r(\un)} M^\un \, \Bun \in \cL.
\ee


\begin{lemma}    \label{lemMouldExpAlft}
Let $M^\bul \in \Alt(\cN)$ and let~$\Om$ be an orbit of the
action~\eqref{eqrightactionfSr} of~$\fS_r$ for some $r\in\N^*$.
Then
\be   \label{eqMouldExpOneOrb}
\sum_{\un\in\Om} M^\un \, \Bun =
r \sum_{\un\in\Om} M^\un \, B_{\un}.
\ee
If $M^\bul \in \Alt(\cN)\cap\kkfUN$, then
\be   \label{eqMouldExpAltf}
M^\bul B_\bul = M^\bul \Bul.
\ee
\end{lemma}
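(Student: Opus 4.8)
The statement has two parts: first the ``one-orbit identity'' \eqref{eqMouldExpOneOrb}, and second the equality \eqref{eqMouldExpAltf}, which is just the sum of \eqref{eqMouldExpOneOrb} over all $\fS_r$-orbits as $r$ ranges over $\N^*$ (the empty word contributes $M^\est=0=$ its term on the right). So the whole content is in \eqref{eqMouldExpOneOrb}, and the plan is to prove it by induction on $r = r(\un)$, using Lemma~\ref{lemlimkLieAssComould} to convert the Lie comould into the associative comould and then exploiting alternality of $M^\bul$ to kill all the ``mixed'' terms.

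\textbf{Key steps.} First I would fix an orbit $\Om \subset \cN^r$ and rewrite the left-hand side of \eqref{eqMouldExpOneOrb} using Lemma~\ref{lemlimkLieAssComould}:
\[
\sum_{\un\in\Om} M^\un\,\Bun
= \sum_{\un\in\Om} M^\un \sum_{(\ua,\ub)} (-1)^{r(\ub)}\, r(\ua)\, \shabn\, B_{\,\wt\ub\,\ua}.
\]
Since $M^\bul$ is constant-length on the orbit and $\shabn$ forces $r(\ua)+r(\ub)=r$, I would reorganize the double sum by the pair of lengths $(\ell,r-\ell)$ with $\ell=r(\ua)$; the key point is that for each word $\uc$ of length $r$ (which will play the role of $\wt\ub\,\ua$ after choosing where to cut), the coefficient $\sum_{\un\in\Om}M^\un\,\shabn$ can be re-expressed via the right $\fS_r$-action, because the pairs $(\ua,\ub)$ with $\ua\ub$ a rearrangement of a fixed word of $\Om$ are exactly indexed by $\fS_r(\ell)$-cosets. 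Then the sum over $\un\in\Om$ of $\shabn\,M^\un$ becomes (up to the orbit-averaging) the value of the alternality defect of $M^\bul$ on the pair $(\ua,\ub)$. By Definition~\ref{defalt}, $\sum_{\un} \shabn M^\un = 0$ for \emph{nonempty} $\ua,\ub$, so only the two boundary decompositions $\ell=r$ (i.e.\ $\ub=\est$) and $\ell=0$ (i.e.\ $\ua=\est$) survive. For $\ell=0$ the factor $r(\ua)=0$ kills that term; for $\ell=r$ we get exactly $r\sum_{\un\in\Om} M^\un B_{\un}$. This is precisely \eqref{eqMouldExpOneOrb}.

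Alternatively — and this is probably cleaner to write — I would run the induction exactly as in the proof of Lemma~\ref{lemlimkLieAssComould}: write $\um = \un\,c$ with $r(\un)=r-1$, split the orbit of $\um$ according to where the letter $c$ sits, and use the recursion $\Bum = [B_c,\Bun]$ together with the induction hypothesis applied to the shorter words, plus the length-two alternality relation repackaged as the vanishing \eqref{eqinducshabn} of $\LhS$. Since $\LhS$ already encodes the $\ell=0$ cancellation, this lets one avoid re-deriving the combinatorial bijection from scratch.

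\textbf{Main obstacle.} The delicate point is the bookkeeping that identifies, for fixed length $\ell$, the inner sum $\sum_{\un\in\Om}\shabn\,M^\un$ with the alternality defect of $M^\bul$ on $(\ua,\ub)$: one must be careful that summing over the orbit $\Om$ rather than over all words is exactly compensated by the way the shuffling coefficient $\shabn$ already ``counts'' the reorderings, and that no overcounting of orbit elements occurs. Concretely, one needs the identity $\sum_{\un\in\cN^r}\shabn\,M^\un = \sum_{\un\in\Om}\shabn\,M^\un$ whenever $\ua\,\ub$ lies in $\Om$ (all contributing $\un$ automatically lie in the orbit of $\ua\,\ub$), so that Definition~\ref{defalt} applies orbit by orbit. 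Once this is cleanly stated, the cancellation of all $0<\ell<r$ terms by alternality and of the $\ell=0$ term by the factor $r(\ua)=0$ is immediate, and \eqref{eqMouldExpAltf} follows by summing over orbits and all $r$, the sums being finite since $M^\bul$ has finite support.
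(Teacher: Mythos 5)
Your proposal is correct and follows essentially the same route as the paper: rewrite the orbit sum via Lemma~\ref{lemlimkLieAssComould}, exchange the order of summation, observe that $\shabn\neq0$ forces $\un$ into the orbit of $\ua\,\ub$ so the inner sum is the full alternality sum (zero for nonempty $\ua,\ub$, equal to $M^\ua$ when $\ub=\est$, and killed by the factor $r(\ua)=0$ when $\ua=\est$), and then deduce \eqref{eqMouldExpAltf} by decomposing the support into orbits. The point you flag as the ``main obstacle'' is exactly the observation the paper uses, so no gap remains.
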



\begin{proof}
  Lemma~\ref{lemlimkLieAssComould} allows us to rewrite the \lhs\
  of~\eqref{eqMouldExpOneOrb} as
\[
\LhS = \sum_{(\ua,\ub) \in \UN\times\UN} (-1)^{r(\ub)} r(\ua) 
\bigg(\sum_{\un\in\Om} \shabn M^\un \bigg)
B_{\,\wt\ub \, \ua}.
\]
In view of~\eqref{eqdefshabn}, the sum between parentheses is~$0$ if
$\ua\,\ub\notin\Om$, whereas, if $\ua\,\ub\in\Om$, it is
\[ \sum_{\un\in\UN} \shabn M^\un. \]
According to Definition~\ref{defalt}, the latter sum is~$0$ when
both~$\ua$ and~$\ub$ are nonempty, and it is $M^\ua$ when $\ub=\est$,
hence we end up with
$\LhS = \sum_{\ua \in \Om} r(\ua) M^\ua \, B_\ua$,
which coincides with the \rhs\ of~\eqref{eqMouldExpOneOrb}.

To prove~\eqref{eqMouldExpAltf}, by linearity we can
assume that there is $r\ge1$ such that the support of~$M^\bul$ is
contained in~$\cN^r$.
Then we can partition~$\cN^r$ into orbits:
\[
M^\bul B_\bul = \sum_{\Om\in \cN^r / \fS_r} \; \sum_{\un\in\Om} \,
M^\un \, B_\un
= \tfrac{1}{r} \sum_{\Om\in \cN^r / \fS_r} \; \sum_{\un\in\Om} \,
M^\un \, \Bun
= \tfrac{1}{r} \sum_{\un\in\cN^r} M^\un \, \Bun
= \sum_{\un\neq\est} \tfrac{1}{r(\un)} M^\un \, \Bun.
\]
\end{proof}


\begin{remark}
An identity more precise than~\eqref{eqMouldExpOneOrb} is mentioned in
\'Ecalle's works:
given a letter~$c$ and an orbit~$\Om$ of the
action~\eqref{eqrightactionfSr} of~$\fS_r$ for some $r\in\N^*$, 
let $r_c(\Om)$ denote the number of occurrences of the letter~$c$ in
any word of~$\Om$ and let 
$\Om_c \defeq \{\, \un \in \Om \mid n_1 = c \,\}$;
then, for any alternal mould~$M^\bul$, 
\[
\sum_{\un\in\Om_c} M^\un \, \Bun = r_c(\Om) \sum_{\un\in\Om} M^\un \, B_\un.
\]
This is related to the identity
\[
B_{[c\,\un]} = \sum_{(\ua,\ub) \in \UN\times\UN} (-1)^{r(\ub)} 
\, \shabn \, B_{\,\wt\ub \, c \, \ua}
\quad\text{for any $c\in\cN$ and $\un\in\UN$}
\]
and to the following consequence of alternality:
\[
M^{\ua\, c\, \ub} = (-1)^{r(\ua)} \sum_{\un\in\UN} \shtiabn M^{c\,\un}
\quad \text{for any $c\in\cN$ and $\ua,\ub\in\UN$}
\]
(stated as formula~(5.26) in~\cite{EV95}).
\end{remark}


Recall that, as mentioned in Section~\ref{secalternalmoulds}, the
set $\Alt(\cN)$ of alternal moulds is a Lie subalgebra of $\LIE(\kk^\UN)$.
Let us denote the set of finite-support alternal moulds by
\[
\Altf(\cN) \defeq \Alt(\cN) \cap \kkfUN.
\]
It is obvious that $\Altf(\cN)$ is also a Lie subalgebra.
In view of~\eqref{eqMouldExpAltf}, there is no need to distinguish
between the maps~\eqref{eqdeffcontr} and~\eqref{eqdefContrBul} when
restricting to~$\Altf(\cN)$.


\begin{proposition}   \label{propLieAntimAltfcL}
  The map $M^\bul \mapsto M^\bul \Bul$ induces a Lie algebra anti-morphism
  $\Altf(\cN) \to \cL$, \ie
\[
[M^\bul,N^\bul] \Bul =
[N^\bul \Bul, M^\bul \Bul]
\quad\text{for any $M^\bul,N^\bul \in \Altf(\cN)$.}
\]
\end{proposition}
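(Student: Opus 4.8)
The plan is to exploit the anti-morphism property of the \emph{associative} mould-comould contraction~\eqref{eqdeffcontr} that was just established in~\eqref{eqantimorphassalg}, together with the identification~\eqref{eqMouldExpAltf} between the associative and Lie contractions on finite-support alternal moulds, and the fact (recalled from Section~\ref{secalternalmoulds}) that $\Altf(\cN)$ is a Lie subalgebra of $\LIE(\kk^\UN)$. The point is that everything reduces to a computation inside the associative algebra~$\A$, where bracket-versus-bracket becomes a trivial consequence of anti-multiplicativity.

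First I would take $M^\bul, N^\bul \in \Altf(\cN)$. Since $\Altf(\cN)$ is a Lie subalgebra of $\LIE(\kk^\UN)$, the mould $[M^\bul,N^\bul] = M^\bul\times N^\bul - N^\bul\times M^\bul$ again lies in $\Altf(\cN)$, so all three moulds $M^\bul$, $N^\bul$, $[M^\bul,N^\bul]$ are finite-support alternal and Lemma~\ref{lemMouldExpAlft}, specifically~\eqref{eqMouldExpAltf}, applies to each of them: $P^\bul\Bul = P^\bul B_\bul$ for $P^\bul \in \{M^\bul, N^\bul, [M^\bul,N^\bul]\}$. Now I compute, using~\eqref{eqantimorphassalg} in the crucial step:
\[
[M^\bul,N^\bul]\Bul = [M^\bul,N^\bul] B_\bul = (M^\bul\times N^\bul) B_\bul - (N^\bul\times M^\bul) B_\bul = (N^\bul B_\bul)(M^\bul B_\bul) - (M^\bul B_\bul)(N^\bul B_\bul).
\]
By definition of the bracket on $\LIE(\A)$ this last expression is $-[M^\bul B_\bul, N^\bul B_\bul] = [N^\bul B_\bul, M^\bul B_\bul]$, and replacing $M^\bul B_\bul$ by $M^\bul\Bul$ and $N^\bul B_\bul$ by $N^\bul\Bul$ (again via~\eqref{eqMouldExpAltf}) gives $[M^\bul,N^\bul]\Bul = [N^\bul\Bul, M^\bul\Bul]$, which is the claimed identity. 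One should note in passing that $M^\bul\Bul$ and $N^\bul\Bul$ genuinely lie in~$\cL$, not merely in~$\A$, because they are finite linear combinations of the $\Bun$'s which are iterated Lie brackets of the $B_n$'s; hence the bracket $[N^\bul\Bul, M^\bul\Bul]$ taken in $\LIE(\A)$ is in fact the Lie bracket in~$\cL$, and the statement makes sense as an identity in~$\cL$.

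I do not expect any serious obstacle here: the only mild subtlety is the bookkeeping of the anti-morphism (the reversal of the order of the two factors when passing from $M^\bul\times N^\bul$ to $(N^\bul B_\bul)(M^\bul B_\bul)$), which is precisely what produces the \emph{anti}-morphism of Lie algebras rather than a morphism, matching the sign already present in~\eqref{eqantimorphassalg} and in Lemma~\ref{lemlimkLieAssComould}. Everything else is a one-line formal manipulation, so the proof is essentially the three displayed equalities above.
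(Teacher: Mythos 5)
Your proof is correct and follows essentially the same route as the paper: both rely on the anti-morphism property~\eqref{eqantimorphassalg} of the associative contraction together with the identification~\eqref{eqMouldExpAltf} to reduce the Lie identity to a one-line computation in~$\A$. The extra remarks (closure of $\Altf(\cN)$ under the bracket, and that the resulting brackets actually live in~$\cL$) are implicit in the paper's argument and harmless to spell out.
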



\begin{proof}
Using~\eqref{eqantimorphassalg} and~\eqref{eqMouldExpAltf}, we compute
$[M^\bul,N^\bul] \Bul = [M^\bul,N^\bul] B_\bul = 
(M^\bul \times N^\bul) B_\bul - (N^\bul \times M^\bul) B_\bul
= (N^\bul B_\bul) (M^\bul B_\bul) - (M^\bul B_\bul) (N^\bul B_\bul)
= [N^\bul B_\bul, M^\bul B_\bul] = [N^\bul \Bul, M^\bul \Bul]$.
\end{proof}


\begin{proposition}   \label{prop:lamBbulBun}
Suppose that there are a function $\lam\col\cN\to\kk$ and an
$X_0\in\cL$ such that $[X_0,B_n] = \lam(n) B_n$ for each letter~$n$.
Then
\be   \label{eqcrochXzMbBb}
[X_0,M^\bul B_\bul] = (\na_\lam M^\bul) B_\bul
\ens\;\text{and}\ens\;
[X_0,M^\bul \Bul] = (\na_\lam M^\bul) \Bul
\quad\text{for any $M^\bul\in\kkfUN$.}
\ee
\end{proposition}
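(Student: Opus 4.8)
The plan is to establish the two identities in \eqref{eqcrochXzMbBb} by exploiting the fact that $\ad_{X_0}$ is a derivation of the associative algebra $\A$ (hence also a derivation of the Lie algebra $\cL$), combined with the hypothesis $[X_0,B_n]=\lam(n)B_n$. The cleanest route goes through the associative comould and then descends to the Lie comould via Lemma~\ref{lemMouldExpAlft}.

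First I would compute $[X_0,B_\un]$ for a word $\un=n_1\cdots n_r$. Since $B_\un = B_{n_r}\cdots B_{n_1}$ and $\ad_{X_0}$ satisfies the Leibniz rule for the associative product of $\A$, expanding $[X_0, B_{n_r}\cdots B_{n_1}]$ as a telescoping sum and using $[X_0,B_{n_i}]=\lam(n_i)B_{n_i}$ at each position gives $[X_0,B_\un] = \big(\lam(n_1)+\cdots+\lam(n_r)\big)B_\un = \lam(\un)B_\un$, where $\lam(\un)$ is the monoid-morphism extension from~\eqref{eqextendph}. Multiplying by $M^\un$ and summing over the (finite) support of $M^\bul$ yields
\[
[X_0, M^\bul B_\bul] = \sum_{\un} M^\un \lam(\un) B_\un = \sum_{\un} (\na_\lam M^\bul)^\un B_\un = (\na_\lam M^\bul) B_\bul,
\]
which is the first identity; here I just need to observe that linearity of $\ad_{X_0}$ lets the bracket pass through the finite sum, and that $(\na_\lam M^\bul)^\un = \lam(\un)M^\un$ by the definition~\eqref{eqdefnaph}. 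Note also that $\na_\lam$ preserves finite support, so the right-hand side makes sense via~\eqref{eqdeffcontr}.

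For the second identity I would argue in exactly the same way at the level of the Lie comould: since $\ad_{X_0}$ is a Lie algebra derivation of $\cL$, the same telescoping computation applied to $\Bun = [B_{n_r},[\ldots[B_{n_2},B_{n_1}]\ldots]]$ gives $[X_0,\Bun] = \lam(\un)\Bun$. Then
\[
[X_0, M^\bul \Bul] = \sum_{\un\neq\est} \tfrac{1}{r(\un)} M^\un \lam(\un)\Bun = \sum_{\un\neq\est}\tfrac{1}{r(\un)}(\na_\lam M^\bul)^\un \Bun = (\na_\lam M^\bul)\Bul.
\]
Alternatively, for $M^\bul$ alternal one can deduce the second identity from the first via~\eqref{eqMouldExpAltf}, but since the proposition is stated for arbitrary $M^\bul\in\kkfUN$ the direct computation on $\Bul$ is the right approach.

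I do not expect any genuine obstacle here: the only point requiring a little care is justifying the eigenvalue formula $[X_0,\Bun]=\lam(\un)\Bun$ for the iterated bracket, which is a routine induction on $r(\un)$ using that $\ad_{X_0}$ is a derivation — $[X_0,[B_{n_r},\Bun']] = [[X_0,B_{n_r}],\Bun'] + [B_{n_r},[X_0,\Bun']] = (\lam(n_r)+\lam(\un'))[B_{n_r},\Bun']$ where $\un = n_r\,\un'$ read appropriately — and similarly for the associative version. Everything else is bookkeeping with finite sums and the definition of $\na_\lam$.
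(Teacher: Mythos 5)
Your proof is correct and follows essentially the same route as the paper: an induction (telescoping Leibniz argument) showing $[X_0,B_\un]=\lam(\un)B_\un$ and $[X_0,\Bun]=\lam(\un)\Bun$ using that $\ad_{X_0}$ is a derivation of both $\A$ and $\cL$, then linearity over the finite support together with the definition of $\na_\lam$. The paper's proof is just a condensed version of this same argument.
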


\begin{proof}
One easily checks that
\[
[X_0,B_\un] = \lam(\un) B_\un
\ens\;\text{and}\ens\;
[X_0,\Bun] = \lam(\un) \Bun
\quad \text{for any $\un\in\UN$}
\]
by induction on $r(\un)$ (because $[X_0,\cdot\,]$ is a derivation of
the associative algebra~$\A$, as well as derivation of the Lie algebra~$\cL$),
whence~\eqref{eqcrochXzMbBb} follows.
\end{proof}

\subsection{Mould expansions in complete filtered Lie algebras}   \label{subsecMouldExpCFLA}
$ $\\[-2.75ex]

\noindent
%
We now assume that $\cL$ is a complete filtered Lie
algebra and that $(B_n)_{n\in\cN}$ is a formally summable family
such that each~$B_n$ has order~$\ge1$.
We do not need any auxiliary associative algebra~$\A$ such that $\cL
\subset \LIE(\A)$ in this section, except at the end of Remark~\ref{remdefsym}.


\begin{lemma}
For each $M^\bul \in \kk^\UN$ the family $(\tfrac{1}{r(\un)} M^\un \,
\Bun)_{\un\neq\est}$ is formally summable,
hence there is a well-defined extension of the
map~\eqref{eqdefContrBul} to the set of all moulds (for which we use
the same notation):
\be   \label{eqdefgencontr}
M^\bul \in \kk^\UN \mapsto 
M^\bul \Bul \defeq
\sum_{\un\neq\est} \tfrac{1}{r(\un)} M^\un \, \Bun \in \cL.
\ee
This is a $\kk$-linear map, compatible with the filtrations of~$\kk^\UN$ and~$\cL$ in
the sense that, for each $m\in\N$ and $M^\bul \in \kk^\UN$,
\be   \label{ineqcompatord}
\ord(M^\bul)\ge m \imp \ord(M^\bul\Bul)\ge m
\ee
(with the notation~\eqref{eqdefordmould} for the order function associated with the
filtration of~$\kk^\UN$).
\end{lemma}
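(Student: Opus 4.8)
The plan is to establish formal summability of $(\tfrac{1}{r(\un)} M^\un \, \Bun)_{\un\neq\est}$ first, since that is what makes the extended map~\eqref{eqdefgencontr} well-defined, and then to read off linearity and the filtration compatibility~\eqref{ineqcompatord} almost for free. For summability, recall that by definition I must show that for every fixed $m\in\N$ the set of words $\un$ with $\tfrac{1}{r(\un)} M^\un\,\Bun \notin \cL\ugeq m$ is finite. The key observation is that $\Bun$ is an iterated Lie bracket of the $B_{n_i}$, so $\ord(\Bun) \ge \sum_{i=1}^{r(\un)} \ord(B_{n_i}) \ge r(\un)$ — this uses the filtration compatibility $[\cL\ugeq p,\cL\ugeq q]\subset\cL\ugeq{p+q}$ from Definition~\ref{DefCFLA} together with the hypothesis $\ord(B_n)\ge1$ for all $n$, and an easy induction on $r(\un)$. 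Hence any word of length $\ge m$ already contributes a term of order $\ge m$, so it suffices to control words of length $< m$; but for each fixed length $r$, formal summability of $(B_n)_{n\in\cN}$ forces all but finitely many letters $n$ to satisfy $\ord(B_n)\ge m$, and then for a word $\un$ of length $r<m$ to have $\ord(\Bun) < m$ every one of its letters must lie in that finite ``bad'' set $\cN_m \defeq \{n \mid B_n\notin\cL\ugeq m\}$, of which there are only finitely many words of length $r$. Taking the union over $r = 1,\dots,m-1$ we get a finite set, which is exactly the summability condition.

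Once summability is in hand, the limit $\sum_{\un\neq\est}\tfrac{1}{r(\un)}M^\un\,\Bun$ exists in~$\cL$ by the completeness discussion right after Definition~\ref{DefCFLA}, and it is independent of the chosen exhaustion, so~\eqref{eqdefgencontr} is well-defined; it agrees with~\eqref{eqdefContrBul} on $\kkfUN$ because a finite-support mould gives a literally finite sum. Linearity over~$\kk$ is immediate: the map $M^\bul\mapsto M^\bul\Bul$ is, termwise, $\kk$-linear in the coefficients $M^\un$, and passing to the limit of an exhaustion preserves $\kk$-linear combinations because addition and scalar multiplication are continuous for the distance $d$ (they do not increase order). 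The only mild point worth stating explicitly is that a common exhaustion can be chosen for $M^\bul$ and $N^\bul$ simultaneously when checking additivity, which is possible since the union of two finite sets is finite.

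Finally, for~\eqref{ineqcompatord}, suppose $\ord(M^\bul)\ge m$, i.e.\ $M^\un = 0$ whenever $r(\un) < m$. Then in the sum defining $M^\bul\Bul$ only words of length $\ge m$ survive, and for such words $\ord\big(\tfrac{1}{r(\un)}M^\un\,\Bun\big) \ge \ord(\Bun) \ge r(\un) \ge m$ by the inequality established above (the scalar factor $\tfrac{1}{r(\un)}M^\un\in\kk$ does not lower the order). Since $\cL\ugeq m$ is a closed subspace — it is the closed ball of radius $2^{-m}$ around~$0$ for the distance~$d$, hence closed, being also a subspace it is closed under the Cauchy limits of the exhaustion — the limit $M^\bul\Bul$ again lies in $\cL\ugeq m$, which is exactly $\ord(M^\bul\Bul)\ge m$. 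I do not expect any serious obstacle here; the one place to be slightly careful is the bookkeeping in the finiteness argument, namely keeping the two sources of finiteness (length $\ge m$ already forcing high order, versus finitely many bad letters at each short length) cleanly separated, and making sure the bound $\ord(\Bun)\ge r(\un)$ is justified by induction using the bracket-filtration compatibility rather than asserted.
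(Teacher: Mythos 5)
Your proof is correct and follows essentially the same route as the paper: the key inequality $\ord(\Bun)\ge r(\un)$ (indeed $\ord(\Bun)\ge\sum_i\ord(B_{n_i})$) from the bracket–filtration compatibility, combined with the finiteness of $\cN_m=\{n\mid \ord(B_n)<m\}$, to show that only finitely many words give terms outside $\cL\ugeq m$. The extra details you supply (closedness of $\cL\ugeq m$, common exhaustions for linearity) are points the paper treats as obvious, and they are handled correctly.
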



\begin{proof}
By assumption, $\cN_m \defeq \{\, n \in\cN \mid \ord(B_n) < m \,\}$ is
finite for each $m\in\N$ and, in view of Definition~\ref{DefCFLA},
$\ord(\Bun) \ge r(\un)$ for each $\un\in \UN$. This implies that
\[
\{\, \un \in \UN \mid \ord(\Bun) < m \,\} \subset
\{\, \un \in \UN \mid r\defeq r(\un)<m \;\text{and}\; n_1,\ldots,n_r\in\cN_m \,\},
\]
which is finite, hence the formal summability follows.
The property~\eqref{ineqcompatord} is obvious.
\end{proof}


Note that, if $M^\est=0$ (as is the case when~$M^\bul$ is alternal),
then $\ex^{M^\bul}$ is a well-defined mould and $Y\defeq M^\bul\Bul$
has order $\ge1$, hence $\ex^{\ad_Y}$ is a well-defined Lie algebra automorphism.


\begin{proposition}   \label{propLieMCun}
(i)
The map~\eqref{eqdefgencontr} induces a Lie
algebra anti-morphism $\Alt(\cN) \to \cL$, \ie
\be   \label{eqLieAntimAltcL}
[M^\bul,N^\bul] \Bul =
[N^\bul \Bul, M^\bul \Bul]
\quad\text{for any $M^\bul,N^\bul \in \Alt(\cN)$.}
\ee
(ii)
If $M^\bul,N^\bul \in \Alt(\cN)$, then the mould
$\ex^{-M^\bul} \times N^\bul \times \ex^{M^\bul}$ can be written
\[
\ex^{-M^\bul} \times N^\bul \times \ex^{M^\bul} = 
\big( \ex^{-\ad_{M^\bul}}\big) N^\bul
= \sum_{k\ge0} \tfrac{(-1)^k}{k!} (\ad_{M^\bul})^k N^\bul
\]
and is alternal, and $Y \defeq M^\bul \Bul$ satisfies
\[
\ex^{\ad_Y} (N^\bul \Bul) = 
\big( \ex^{-M^\bul} \times N^\bul \times \ex^{M^\bul} \big) \Bul.
\]
\end{proposition}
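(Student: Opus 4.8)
The plan is to prove (i) by a density-type argument reducing to the finite-support case already handled in Proposition~\ref{propLieAntimAltfcL}, and then to derive (ii) as a formal consequence of (i) together with the structure of the exponential series. For part~(i): given $M^\bul,N^\bul\in\Alt(\cN)$, I would introduce the truncations $M^\bul_{[m]}$ and $N^\bul_{[m]}$ obtained by setting to zero the value on every word of length $\ge m$. These truncations are finite-support (their support lies in words of bounded length whose letters lie in the finite set $\cN_m$ is \emph{not} quite right—rather, finite-support as elements of $\kk^\UN_{\mathrm{fb}}$ requires more care, so instead I would truncate only in word length, which already gives finite support among words of each fixed length is still infinite). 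The cleaner route: both sides of \eqref{eqLieAntimAltcL} are continuous in $M^\bul$ and $N^\bul$ for the filtration topology, by \eqref{ineqcompatord} and the fact that mould multiplication and the map \eqref{eqdefgencontr} are filtered; and $\Altf(\cN)$ is dense in $\Alt(\cN)$ for that topology (truncate in length: $M^\bul - M^\bul_{[m]}$ has order $\ge m$, and $M^\bul_{[m]}\in\Altf(\cN)$ since alternality is a finite-length-by-length condition, so the truncation stays alternal). Hence \eqref{eqLieAntimAltcL} follows from Proposition~\ref{propLieAntimAltfcL} by passing to the limit, using that $[N^\bul\Bul,M^\bul\Bul]$ depends continuously on each argument (bilinearity of the bracket plus \eqref{ineqcompatord}).

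For part~(ii), the alternality of $\ex^{-M^\bul}\times N^\bul\times\ex^{M^\bul}$ is the identity $\big(\ex^{-\ad_{M^\bul}}\big)N^\bul=\sum_{k\ge0}\frac{(-1)^k}{k!}(\ad_{M^\bul})^k N^\bul$ in $\kk^\UN$, which is a purely algebraic fact in any complete filtered associative algebra (expand both sides and compare), combined with the fact that $\Alt(\cN)$ is a Lie subalgebra of $\LIE(\kk^\UN)$ (stated in Section~\ref{secalternalmoulds}) so each $(\ad_{M^\bul})^k N^\bul$ is alternal, and $\Alt(\cN)$ is closed in the filtration topology, so the summable series stays alternal. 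Then, writing $Y=M^\bul\Bul$ and $V=N^\bul\Bul$, I would show $\ex^{\ad_Y}(V)=\big(\ex^{-M^\bul}\times N^\bul\times\ex^{M^\bul}\big)\Bul$ by checking that the map $\Xi\col M^\bul\mapsto$ ``$M^\bul\Bul$'' intertwines $\ad$ on the mould side with $-\ad$ on the $\cL$ side: precisely, by part~(i), $\Xi\big([M^\bul,N^\bul]\big)=-\ad_{\Xi(M^\bul)}\Xi(N^\bul)$, hence by induction $\Xi\big((\ad_{M^\bul})^k N^\bul\big)=(-\ad_Y)^k V$, and summing $\frac{(-1)^k}{k!}$ times these (using linearity and continuity of $\Xi$, i.e.\ \eqref{ineqcompatord}, to exchange $\Xi$ with the summable series) gives $\Xi\big(\big(\ex^{-\ad_{M^\bul}}\big)N^\bul\big)=\sum_k\frac{(-1)^k}{k!}(-\ad_Y)^k V=\ex^{\ad_Y}(V)$.

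The main obstacle I anticipate is the density/continuity step in part~(i): one must be careful that the length-truncation $M^\bul_{[m]}$ is genuinely alternal (it is, because the alternality relations \eqref{eqdefMaltshabn} only relate words of equal length, so killing all words of length $\ge m$ preserves them) and that the ``contraction'' map $M^\bul\mapsto M^\bul\Bul$ together with mould multiplication are jointly continuous enough that both sides of \eqref{eqLieAntimAltcL} pass to the limit—this is exactly what \eqref{ineqcompatord} and $\ord(M^\bul\times N^\bul)\ge\ord(M^\bul)+\ord(N^\bul)$ guarantee, so the argument is routine once set up. Everything else is formal bookkeeping with summable families in the complete filtered setting. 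Alternatively, one could avoid density entirely and prove (i) directly by the orbit computation of Lemma~\ref{lemMouldExpAlft} applied length by length, mirroring the proof of Proposition~\ref{propLieAntimAltfcL} but keeping track of formal summability; I would present whichever is shorter, likely the density argument.
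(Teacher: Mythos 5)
Your proposal follows essentially the same route as the paper's proof: part (i) by the $1$-Lipschitz continuity of $M^\bul\mapsto M^\bul\Bul$ from~\eqref{ineqcompatord} together with approximation of alternal moulds by finite-support alternal moulds, reducing to Proposition~\ref{propLieAntimAltfcL}, and part (ii) by the Hadamard identity $\ex^{-M^\bul}\times N^\bul\times\ex^{M^\bul}=\ex^{-\ad_{M^\bul}}N^\bul$ plus the intertwining $(\ad_{M^\bul}N^\bul)\Bul=-\ad_Y(N^\bul\Bul)$ supplied by (i). The one caveat (shared by the paper's own wording) is that when $\cN$ is infinite the length-only truncation is \emph{not} finite-support, so the approximation must also restrict the letters to the finite sets $\cN_m$ coming from the formal summability of $(B_n)$ — exactly the truncation $M^\bul\uim$ used later for the Addendum — the resulting error being harmless because it contracts into $\cL\ugeq m$.
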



\begin{proof}
(i) As mentioned in Section~\eqref{secDefMoulds}, the set of all
moulds $\kk^\UN$ is a complete metric space for the distance
$d(M^\bul,N^\bul) \defeq 2^{-\ord(N^\bul-M^\bul)}$.
The map $M^\bul \mapsto M^\bul\Bul$
is continuous (and even $1$-Lipschitz) by~\eqref{ineqcompatord}, and
the set of finite-support alternal moulds $\Altf(\cN)$ is dense in 
$\Alt(\cN)$, so~\eqref{eqLieAntimAltcL} follows from Proposition~\ref{propLieAntimAltfcL}.

(ii) Because of~(i), the adjoint representations of $\Alt(\cN)$
and~$\cL$ are related by
\be   \label{eqreladjreprAltcL}
M^\bul\in\Alt(\cN),\; Y = M^\bul\Bul \imp
\ad_Y(N^\bul\Bul) = - (\ad_{M^\bul}N^\bul) \Bul
\ens\text{for any $N^\bul\in\Alt(\cN)$,}
\ee
therefore 
$\ex^{\ad_Y}(N^\bul\Bul) = \big( \ex^{-\ad_{M^\bul}}(N^\bul) \big) \Bul$,
where $\ex^{-\ad_{M^\bul}}(N^\bul) \in \Alt(\cN)$ is well-defined
because $M^\est=0$, hence $\ad_{M^\bul}$ increases order in~$\cL$ by
at least one unit and $\ex^{-\ad_{M^\bul}}$ is a well-defined
$\kk$-linear operator of~$\Alt(\cN)$.

In fact, $\Alt(\cN) \hookrightarrow \LIE(\kk^\UN)$ and
$\ex^{-\ad_{M^\bul}}$ is also a well-defined $\kk$-linear operator
of~$\kk^\UN$; as such, it can be written
\[
\ex^{-\ad_{M^\bul}} = \ex^{-L_{M^\bul}+R_{M^\bul}} 
= \ex^{-L_{M^\bul}} \circ \ex^{R_{M^\bul}},
\]
where $L_{M^\bul}, R_{M^\bul} \in \End_\kk\big( \kk^\UN \big)$ are the
operators of left-multiplication and right-multiplication by~$M^\bul$,
which commute.
Obviously, $\ex^{-L_{M^\bul}}$ and $\ex^{R_{M^\bul}}$ are the
operators of left-multiplication and right-multiplication
by~$\ex^{-M^\bul}$ and~$\ex^{M^\bul}$, hence
$\ex^{-\ad_{M^\bul}}(N^\bul) = \ex^{-M^\bul} \times N^\bul \times \ex^{M^\bul}$
(the latter identity is sometimes called Hadamard lemma; we gave these
details because later we will need again the operators~$L_{M^\bul}$ and~$R_{M^\bul}$).
\end{proof}


\begin{proposition}   \label{propLieMCdeux}
Suppose that there are a function $\lam\col\cN\to\kk$ and an
$X_0\in\cL$ such that $[X_0,B_n] = \lam(n) B_n$ for each letter~$n$.
If $M^\bul \in \Alt(\cN)$, then
\begin{enumerate}[(i)]
\item
the mould $\na_\lam M^\bul$ is alternal and 
\be   \label{eqcrochXzMbBul}
[ X_0, M^\bul\Bul ] = (\na_\lam M^\bul) \Bul,
\ee
\item
the mould $\ex^{-M^\bul} \times \na_\lam (\ex^{M^\bul})$ can be
written
\[
\ex^{-M^\bul} \times \na_\lam (\ex^{M^\bul}) = 
\sum_{k\ge0} \tfrac{(-1)^{k}}{(k+1)!} (\ad_{M^\bul})^k \na_\lam M^\bul
\]
and is alternal, and $Y\defeq M^\bul\Bul$ satisfies
\[
\ex^{\ad_Y} X_0 = X_0 - 
\big( \ex^{-M^\bul} \times \na_\lam (\ex^{M^\bul}) \big) \Bul.
\]
\end{enumerate}
\end{proposition}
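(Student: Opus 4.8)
The plan is to prove the two parts of Proposition~\ref{propLieMCdeux} in parallel, leveraging the already-established Proposition~\ref{propLieMCun} together with the elementary Proposition~\ref{prop:lamBbulBun}. For part~(i), I would first recall that $\Alt(\cN)$ is a Lie subalgebra of $\LIE(\kk^\UN)$ and that $\na_\lam$ is a derivation of the associative algebra $\kk^\UN$ (formula~\eqref{eqdefnaph}); hence $\na_\lam$ restricts to a derivation of the Lie algebra $\Alt(\cN)$, which in particular shows $\na_\lam M^\bul$ is alternal whenever $M^\bul$ is. The identity~\eqref{eqcrochXzMbBul} is then obtained by the same density argument used in the proof of Proposition~\ref{propLieMCun}(i): it holds on the dense subspace $\Altf(\cN)$ by Proposition~\ref{prop:lamBbulBun} (second identity in~\eqref{eqcrochXzMbBb}), and both sides are continuous in $M^\bul$ (the left side because $\ad_{X_0}$ is continuous on~$\cL$, or rather because $M^\bul \mapsto M^\bul\Bul$ is $1$-Lipschitz by~\eqref{ineqcompatord} and $\na_\lam$ is continuous on $\kk^\UN$), so equality extends to all of $\Alt(\cN)$.

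For part~(ii), I would start from the operator identity $\na_\lam \circ \ex^{L_{M^\bul}} = \ex^{L_{M^\bul}} \circ \big(\text{something}\big)$; more concretely, since $\na_\lam$ is a derivation, one has $\na_\lam(\ex^{M^\bul}) = \sum_{k\ge1}\tfrac{1}{k!}\sum_{i+1+j=k}(M^\bul)^{\times i}\times(\na_\lam M^\bul)\times(M^\bul)^{\times j}$, and multiplying on the left by $\ex^{-M^\bul}$ and collecting terms in the standard way (the same computation that expresses $\ex^{-A}\,de^A$ via $\tfrac{1-\ex^{-\ad_A}}{\ad_A}$) yields
\[
\ex^{-M^\bul}\times\na_\lam(\ex^{M^\bul}) = \sum_{k\ge0}\tfrac{(-1)^k}{(k+1)!}(\ad_{M^\bul})^k\,\na_\lam M^\bul.
\]
Alternality of this mould is then immediate: $\na_\lam M^\bul\in\Alt(\cN)$ by part~(i), each $\ad_{M^\bul}$ preserves $\Alt(\cN)$ since it is a Lie subalgebra, and the series converges in $\kk^\UN$ (because $\ad_{M^\bul}$ raises order by at least one, as $M^\est=0$), so the sum lies in the closed subspace $\Alt(\cN)$.

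It remains to establish $\ex^{\ad_Y}X_0 = X_0 - \big(\ex^{-M^\bul}\times\na_\lam(\ex^{M^\bul})\big)\Bul$ with $Y = M^\bul\Bul$. Here I would compute $\ex^{\ad_Y}X_0 = X_0 + \sum_{k\ge1}\tfrac{1}{k!}(\ad_Y)^k X_0 = X_0 + \sum_{k\ge1}\tfrac{1}{k!}(\ad_Y)^{k-1}[Y,X_0]$, and then observe $[Y,X_0] = -[X_0,M^\bul\Bul] = -(\na_\lam M^\bul)\Bul$ by~\eqref{eqcrochXzMbBul}. Applying the relation~\eqref{eqreladjreprAltcL} between the adjoint representations, $(\ad_Y)^{k-1}\big((\na_\lam M^\bul)\Bul\big) = (-1)^{k-1}\big((\ad_{M^\bul})^{k-1}\na_\lam M^\bul\big)\Bul$, so that
\[
\ex^{\ad_Y}X_0 = X_0 - \Big(\sum_{k\ge1}\tfrac{(-1)^{k-1}}{k!}(\ad_{M^\bul})^{k-1}\na_\lam M^\bul\Big)\Bul = X_0 - \Big(\sum_{j\ge0}\tfrac{(-1)^j}{(j+1)!}(\ad_{M^\bul})^j\na_\lam M^\bul\Big)\Bul,
\]
which is exactly the claimed formula in view of the series expansion of $\ex^{-M^\bul}\times\na_\lam(\ex^{M^\bul})$. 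I expect the main obstacle to be purely bookkeeping: justifying that the term-by-term manipulations of these infinite series are legitimate in the complete filtered setting (formal summability of each family involved, continuity of the maps $M^\bul\mapsto M^\bul\Bul$ and $\ad_Y$, and the interchange of summations), rather than any conceptual difficulty — all the structural inputs are already in place from Propositions~\ref{propLieMCun} and~\ref{prop:lamBbulBun}.
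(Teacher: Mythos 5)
Your proposal follows essentially the same route as the paper: part~(i) by combining Proposition~\ref{prop:lamBbulBun} with density of finite-support moulds and continuity of $M^\bul\mapsto M^\bul\Bul$, and part~(ii) by expanding $\ex^{\ad_Y}X_0$, using the relation~\eqref{eqreladjreprAltcL} between the adjoint representations, and identifying $\ex^{-M^\bul}\times\na_\lam(\ex^{M^\bul})$ with $\sum_{k\ge0}\tfrac{(-1)^k}{(k+1)!}(\ad_{M^\bul})^k\na_\lam M^\bul$ — the only difference being that you invoke the standard ``derivative of the exponential'' computation where the paper verifies it explicitly as an identity $P=Q$ between series in the commuting left- and right-multiplication operators $L_{M^\bul},R_{M^\bul}$ in $\Q[[L,R]]$. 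One small caveat: your justification that $\na_\lam M^\bul$ is alternal (``$\na_\lam$ is a derivation of the associative algebra, hence restricts to a derivation of the Lie subalgebra $\Alt(\cN)$'') is circular, since being a derivation does not by itself imply that the subspace $\Alt(\cN)$ is preserved; the correct one-line argument is that $\shabn\neq0$ forces $\lam(\un)=\lam(\ua)+\lam(\ub)$, so $\sum_{\un}\shabn\,\lam(\un)M^\un=\big(\lam(\ua)+\lam(\ub)\big)\sum_{\un}\shabn\,M^\un=0$ for all nonempty $\ua,\ub$.
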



\begin{proof}
(i)
The identity~\eqref{eqcrochXzMbBul} holds for any $M^\bul \in
\kk^\UN$, as a consequence of~\eqref{eqcrochXzMbBb}, by continuity of
$M^\bul \mapsto M^\bul \Bul$ and density of~$\kkfUN$ in~$\kk^\UN$.
It is obvious that~$\na_\lam$ preserves alternality.

(ii)
We write $\ex^{\ad_Y}X_0 - X_0 
= \sum_{k\ge0} \frac{1}{(k+1)!} (\ad_Y)^{k+1} X_0 
= - \sum_{k\ge0} \frac{1}{(k+1)!} (\ad_Y)^k [X_0,Y]$
with $[X_0,Y] = (\na_\lam M^\bul)\Bul$ by~\eqref{eqcrochXzMbBul}, whence
$ (\ad_Y)^k [X_0,Y] = (-1)^{k} \big( (\ad_{M^\bul})^k \na_\lam M^\bul) \big) \Bul$
by~\eqref{eqreladjreprAltcL}. Therefore
\be   \label{eqexadYXzmXz}
\ex^{\ad_Y}X_0 - X_0 = -(P \, \na_\lam M^\bul) \Bul
\quad\text{with}\ens
P \defeq \sum_{k\ge0} \tfrac{(-1)^{k}}{(k+1)!} (\ad_{M^\bul})^k
\in \End_\kk\big(\kk^\UN\big).
\ee
Note that~$P$ is a well-defined $\kk$-linear operator of~$\kk^\UN$ which
preserves $\Alt(\cN)$,
because $\ad_{M^\bul}$ increases order in~$\kk^\UN$ by at least one
unit and preserves~$\Alt(\cN)$.

On the other hand, as~$\na_\lam$ is a derivation of the associative
algebra~$\kk^\UN$, the Leibniz formula applied to 
$\ex^{M^\bul} = 1^\bul + \sum_{k\ge0} \frac{1}{(k+1)!} (M^\bul)^{\times(k+1)}$
yields
\[
\na_\lam ( \ex^{M^\bul} ) = 
\sum_{k\ge0} \tfrac{1}{(k+1)!} \sum_{p+q=k} 
(M^\bul)^{\times p} \times \na_\lam M^\bul \times (M^\bul)^{\times q}
= \sum_{k\ge0} \tfrac{1}{(k+1)!} \sum_{p+q=k} 
L_{M^\bul}^p R_{M^\bul}^q  (\na_\lam M^\bul),
\]
with the same left- and right-multiplication operators~$L_{M^\bul}$
and~$R_{M^\bul}$ as in the end of the proof of Proposition~\ref{propLieMCun}.
Left-multiplication by $\ex^{-M^\bul}$ coincides with the
operator $\ex^{-L_{M^\bul}}$, therefore
\be    \label{eqdefoperQ}
\ex^{-M^\bul} \times \na_\lam ( \ex^{M^\bul} ) = 
Q\, \na_\lam M^\bul
\quad\text{with}\ens
Q \defeq \ex^{-L_{M^\bul}} \sum_{k\ge0} \tfrac{1}{(k+1)!} \sum_{p+q=k} 
L_{M^\bul}^p R_{M^\bul}^q
\in \End_\kk\big(\kk^\UN\big).
\ee

Since $\ad_{M^\bul} = L_{M^\bul} - R_{M^\bul}$, we see that
$P=Q$ in $\End_\kk\big(\kk^\UN\big)$, as a consequence of the following
identity between (commutative) series of two indeterminates:
\[
\sum_{k\ge0} \tfrac{(-1)^{k}}{(k+1)!} (L-R)^k =
\ex^{-L} \sum_{k\ge0} \tfrac{1}{(k+1)!} \sum_{p+q=k} L^p R^q
\in \Q[[L,R]]
\]
(which can be checked, since $\Q[[L,R]]$ has no divisor of zero,
by multiplying both sides by $L-R$: the \lhs\ yields
$-\ex^{-L+R}+1$ and the \rhs\ yields 
$\ex^{-L} \sum_{k\ge0} \tfrac{1}{(k+1)!} (L^{k+1}-R^{k+1}) 
= \ex^{-L} (\ex^L-\ex^R)$).

Since $P=Q$, \eqref{eqdefoperQ} shows that
$\ex^{-M^\bul} \times \na_\lam ( \ex^{M^\bul} ) = 
P\, \na_\lam M^\bul \in \Alt(\cN)$ (because $\na_\lam M^\bul$ is
alternal and~$P$ preserves~$\Alt(\cN)$),
and~\eqref{eqexadYXzmXz} yields
$\ex^{\ad_Y}X_0 - X_0 = -\big(\ex^{-M^\bul} \times \na_\lam ( \ex^{M^\bul} )\big) \Bul$.
\end{proof}


\begin{remark}   \label{remdefsym}
  The set $\Sym(\cN) \subset \kk^\UN$ of symmetral moulds has been
  defined in~\eqref{defSymexpAlt} as the set of all exponentials of
  alternal moulds.
Here is a characterization more in the spirit of
Definition~\ref{defalt} (the proof of which can be found \eg in \cite[Prop.~5.1]{mouldSN}):
\emph{
A mould~$M^\bul$ is symmetral if and only if}
\be   \label{eqdefstdsym}
M^\est=1 \quad\text{\emph{and}}\quad
\sum_{\un\in\UN} \shabn M^\un = M^\ua M^\ub \quad\text{\emph{for any
two nonempty words $\ua,\ub$.}}
\ee
When identifying~$\kk^\UN$ with the dual of~$\kUN$ as in
footnotes~\ref{ftn:dualkUN} and~\ref{ftn:shuff}, we thus identify
the symmetral moulds with the characters of the associative algebra
$(\kUN,\shuff)$, \ie when viewed as linear forms of~$\kUN$ they are
characterised by
$M^\bul(x\shuff y) = M^\bul(x) M^\bul(y)$.
In that point of view, $\Sym(\cN)$ is a group because $\kUN$ is a bialgebra.

In the case when $\cL \hookrightarrow \LIE(\A)$, where~$\A$ is a
complete filtered associative algebra such that
$\cL\ugeq m = \cL \cap \A\ugeq m$ for each~$m$, the
map~\eqref{eqdeffcontr} extends to an associative algebra
anti-morphism
$M^\bul \in \kk^\UN \mapsto M^\bul B_\bul \in \A$,
compatible with the filtrations of~$\kk^\UN$ and~$\A$, 
whose restriction to $\Alt(\cN)$ coincide with that of $M^\bul \mapsto
M^\bul \Bul$.
Then
\[
M^\est = 0 \imp
\ex^{M^\bul B_\bul} = \big(\ex^{M^\bul}\big) B_\bul.
\]
In particular, if $M^\bul$ is alternal, then $\ex^{M^\bul \Bul} = \big(\ex^{M^\bul}\big) B_\bul$
with $\ex^{M^\bul}$ symmetral.
\end{remark}


\subsection{Theorem~\ref{thmB} implies Theorem~\ref{thmA}}   \label{secPfThmA}
$ $\\[-2.75ex]

\noindent
%
In this section, we take Theorem~\ref{thmB} for granted and show how
Theorem~\ref{thmA} follows from Lie mould calculus.
We thus assume that we are given $\cN$ a nonempty set, 
$\kk$ a field of characteristic zero,
$\lam\col\cN\to\kk$ a map,
$\cL$ a complete filtered Lie algebra over~$\kk$, 
an element $X_0 \in \cL$,
and a formally summable family $(B_n)_{n\in\cN}$ such that $\ord(B_n)
\ge 1$ and $[X_0,B_n] = \lam(n) B_n$ for each $n\in\cN$.

Let us consider any of the many solutions
$(F^\bul,G^\bul) \in \Alt(\kk) \times \Alt(\kk)$ of
equation~\eqref{eqmouldFG} that Theorem~\ref{thmB} provides in the
canonical case of Section~\ref{paragaCanCase},
\ie with~$\na_{\id}$ replacing~$\na_\lam$.
We thus have alternal moulds $F^\bul,G^\bul$,
explicitly defined by \eqref{eqinducinit}--\eqref{eqGlogS} with some
$A^\bul \in \Alt_0(\kk)$, which satisfy equation~\eqref{eqmouldFG}.

Using the map $\lam^* \col \kkukk \to \kk^\UN$ of
Section~\ref{paragaphcNcM}, we define
$F_\lam^\bul \defeq \lam^*(F^\bul)$ and
$G_\lam^\bul \defeq \lam^*(G^\bul)$,
which belong to $\Alt(\cN)$ and satisfy equation~\eqref{eqmouldFG} but
now with the operator~$\na_\lam$ associated with the eigenvalue
map~$\lam$.

Let $Z \defeq F_\lam^\bul \Bul$, in accordance with~\eqref{eqmouldexpZ}. We
have $Z\in \cL\ugeq1$ and
the first part of~\eqref{eqmouldFG} says that $\na_\lam F_\lam^\bul = 0$,
hence $[X_0,Z]=0$ by Proposition~\ref{propLieMCdeux}(i).

Let $Y \defeq G_\lam^\bul \Bul$, in accordance with~\eqref{eqmouldexpY}. We
have $Y\in \cL\ugeq1$ and 
the second part of~\eqref{eqmouldFG} can be rewritten
\[
- \ex^{-G_\lam^\bul} \times \big(\na_\lam(\ex^{G_\lam^\bul})\big)
+ \ex^{-G_\lam^\bul} \times I^\bul \times \ex^{G_\lam^\bul} 
= F_\lam^\bul.
\]
Let us apply the map $M^\bul \mapsto M^\bul\Bul$ to both sides:
because of Proposition~\ref{propLieMCun}(ii) and Proposition~\ref{propLieMCdeux}(ii),
the image of the \lhs\ is
$\ex^{\ad_Y} X_0 - X_0 + \ex^{\ad_Y}(I^\bul\Bul)$, 
while the image of the \rhs\ is~$Z$, we thus get
\[
\ex^{\ad_Y} \big( X_0  + I^\bul\Bul \big) = X_0 + Z,
\]
which is the desired result, since $I^\bul\Bul = \sum_{n\in\cN} B_n$
by~\eqref{eqdefmouldI}.

\subsection{Proof of the formulas~\eqref{eqadYGB}--\eqref{eqexpadYSB}
of Remark~\ref{remexpadSB}}  \label{secPfRem}
$ $\\[-2.75ex]

\noindent
%
We keep the same assumptions and notations as in Section~\ref{secPfThmA}.

Let us denote by $\cE \defeq \End_\kk(\cL)$ the associative algebra consisting of
all $\kk$-linear operators of the vector space underlying~$\cL$
(multiplication being defined as operator composition), and by
$\D$ the subset of all derivations of the Lie
algebra~$\cL$, which is in fact a Lie subalgebra of
$\LIE(\cE)$
(Lie bracket being defined as operator commutator).
For each $m\in\N$, we set
\be   \label{eqfiltrderiv}
\cE\ugeq m \defeq \{\,
T \in \cE \mid T(\cL\ugeq p) \subset \cL\ugeq{p+m}
\; \text{for each $p\in\N$}\,\}, \quad
\D\ugeq m \defeq \D \cap \cE\ugeq m.
\ee
It is easy to check that $\cE\ugeq0\supset\cE\ugeq1\supset\cE\ugeq2 \supset \ldots$ is a complete filtered associative algebra
and $\D\ugeq 0\supset\D\ugeq1\supset\D\ugeq2 \supset \ldots$ is a complete filtered Lie algebra. Moreover,
$\ad \col \cL \to \D\ugeq 0$ is a Lie algebra morphism
compatible with the filtrations, in the sense that it maps~$\cL\ugeq m$
to~$\D\ugeq m$.
Thus, $(\ad_{B_n})_{n\in\N}$ is a formally summable family contained
in $\D\ugeq 1$ and we are in the situation described at the end of
Remark~\ref{remdefsym}: with the notation $T_n\defeq \ad_{B_n}$, we
may consider the corresponding associative comould and Lie comould,
defined by
\[
T_\un \defeq \ad_{B_{n_r}} \cdots \ad_{B_{n_1}} \in \cE\ugeq r, \qquad
\Tun \defeq [\ad_{B_{n_r}},[\ldots[\ad_{B_{n_2}},\ad_{B_{n_1}}]\ldots]] =
\ad_{\Bun} \in \D\ugeq r
\]
for any $\un = n_1 \cdots n_r \in \UN$
(the identity $\Tun = \ad_{\Bun}$ is due to the Lie algebra morphism property). 
It follows that
$\ad_{M^\bul\Bul} = M^\bul \Tul$
for any $M^\bul\in\kk^\UN$
and, in the case of the alternal mould~$G_\lam^\bul$, 
\[
\ad_Y = \ad_{G_\lam^\bul\Bul} = G_\lam^\bul \Tul = G_\lam^\bul T_\bul
\]
because the restrictions to~$\Alt(\cN)$ of the maps $M^\bul\mapsto
M^\bul T_\bul$ and $M^\bul\mapsto M^\bul \Tul$ coincide. 
This is~\eqref{eqadYGB}.
Remark~\ref{remdefsym} also says that
\[
\ex^{G_\lam^\bul \Tul} = \big(\ex^{G_\lam^\bul}\big) T_\bul
\]
and, setting 
$S_\lam^\bul \defeq \lam^*(\ex^{G^\bul}) = \ex^{G_\lam^\bul}$
(recall that $\lam^* \col \kkukk \to \kk^\UN$ is a morphism of associative algebras), 
we get $\ex^{\ad_Y} = S_\lam^\bul T_\bul$,
which is~\eqref{eqexpadYSB}.

\subsection{Proof of the addendum to Theorem~\ref{thmA}}  \label{secPfAddend}
$ $\\[-2.75ex]

\noindent
%
We keep the same assumptions and notations as in
Section~\ref{secPfThmA}, except that now 
$F^\bul,G^\bul\in\Alt(\cN)$ are moulds satisfying~\eqref{eqmouldFG}
(\eg the ones denoted by $\lam^*(F^\bul)$ and $\lam^*(G^\bul)$ in Section~\ref{secPfThmA}).

Let $m\in\N^*$. The set
\[
\cN_m \defeq \{\, n\in\cN \mid
\ord(B_n) < m \, \}
\]
is finite, as a consequence of the formal summability of the family
$(B_n)_{n\in\cL}$.
We can thus define a ``truncation map'' 
$M^\bul \in \kk^\UN \mapsto M^\bul\uim \in \kkfUN$
by the formula
\[
M^\est\uim \defeq M^\est, \qquad
M^\un\uim \defeq 
\IND{r<m} \, \IND{n_1,\ldots,n_r\in\cN_m} \, M^\un
\quad\text{for any nonempty word $\un={n_1\cdots n_r} \in\UN$}
\]
and, in our current notations, the formulas \eqref{eq:defZm}--\eqref{eq:defYm} become
\begin{align*}
\tr{Z}{m} &\defeq \sum_{r= 1}^{m-1} \,\sum_{n_1,\ldots,n_r\in\cN_m}
\frac{1}{r} F^{n_1,\ldots,n_r} \Bun
= F^\bul\uim \Bul
\\[.6ex]
\tr{Y}{m} &\defeq\sum_{r=1}^{m-1} \, \sum_{n_1,\ldots,n_r\in\cN_m}
\frac{1}{r} G^{n_1\ldots,n_r} \Bun
= G^\bul\uim \Bul.
\end{align*}
Clearly $\na_\lam F^\bul=0$ entails $\na_\lam F^\bul\uim=0$, hence
$[X_0,Z_m]=0$ by Proposition~\ref{prop:lamBbulBun}.
It only remains to be proved that
\[
W_m \defeq
\ex^{\ad_{\tr{Y}{m}}} \Big(X_0+\sum_{n\in\cN}B_n\Big)
-X_0 - \tr{Z}{m}
\]
has order~$\ge m$.


\begin{lemma}
If $M^\bul \in \Alt(\cN)$, then $M^\bul\uim \in \Altf(\cN)$.
\end{lemma}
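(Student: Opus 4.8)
The plan is to check the two defining conditions of $\Altf(\cN)=\Alt(\cN)\cap\kkfUN$ for the truncated mould $M^\bul\uim$: that its support is finite, and that it is alternal. Finiteness is immediate from the definition of the truncation map, since the only words~$\un$ with $M^\un\uim\neq0$ are those of length~$<m$ all of whose letters lie in the finite set~$\cN_m$, and there are only finitely many such words; hence $M^\bul\uim\in\kkfUN$. Moreover $M^\est\uim=M^\est=0$, because $M^\bul$ is alternal.

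For alternality it then remains to verify that $\sum_{\un\in\UN}\shabn\,M^\un\uim=0$ for any two nonempty words~$\ua,\ub$. First I would isolate the combinatorial fact that makes this work: by Definition~\ref{def:shufflcoeff}, if $\shabn\neq0$ then necessarily $r(\un)=r(\ua)+r(\ub)$ and the multiset of letters of~$\un$ is the union of the multisets of letters of~$\ua$ and of~$\ub$. Writing $M^\un\uim=\pi(\un)\,M^\un$ with $\pi(\est)\defeq1$ and $\pi(\un)\defeq\IND{r<m}\,\IND{n_1,\ldots,n_r\in\cN_m}$ for $\un=n_1\cdots n_r$, this fact shows that $\pi(\un)$ takes one and the same value — call it $c(\ua,\ub)\in\{0,1\}$ — for every~$\un$ with $\shabn\neq0$ (explicitly, $c(\ua,\ub)=1$ exactly when $r(\ua)+r(\ub)<m$ and all letters of~$\ua$ and of~$\ub$ belong to~$\cN_m$).

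It then follows that
\[
\sum_{\un\in\UN}\shabn\,M^\un\uim
= \sum_{\un\in\UN}\shabn\,\pi(\un)\,M^\un
= c(\ua,\ub)\sum_{\un\in\UN}\shabn\,M^\un
= 0,
\]
the last equality by alternality of~$M^\bul$ (Definition~\ref{defalt}). Therefore $M^\bul\uim$ is alternal, and having finite support it belongs to~$\Altf(\cN)$. There is essentially no serious obstacle here: the one point worth stating carefully is that the truncation weight~$\pi$ is constant on the set of words~$\un$ contributing to the shuffle of~$\ua$ and~$\ub$, which is precisely what lets the scalar $c(\ua,\ub)$ be pulled out of the sum; everything else is an unravelling of the definitions.
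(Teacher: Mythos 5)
Your proof is correct and follows essentially the same route as the paper: both arguments rest on the observation that every word $\un$ with $\shabn\neq0$ has length $r(\ua)+r(\ub)$ and the same letters as $\ua\,\ub$, so the truncation factor is constant over the shuffle sum and the alternality relation for $M^\bul\uim$ reduces to that of $M^\bul$ (or to the zero sum). Your explicit weight $\pi$ and the check of finite support just make the paper's terser argument more formal.
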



\begin{proof}
Let $\ua$ and~$\ub$ be nonempty words and consider the expression
$\sum\limits_{\un\UN} \shabn M^\un\uim$.
We find~$0$ if $r(\ua\,\ub)\ge m$ or if one of the letters of~$\ua$
or~$\ub$ is outside~$\cN_m$ (because, then,~$\un$ has the same property
whenever $\shabn\neq0$);
otherwise we find $\sum\limits_{\un\UN} \shabn M^\un$, which is
also~$0$ if~$M^\bul$ is supposed to be alternal.
\end{proof}


Hence $F^\bul\uim$ and~$G^\bul\uim$ are alternal and we can use
Proposition~\ref{propLieMCun}(ii) and
Proposition~\ref{propLieMCdeux}(ii) with $Y_m=G^\bul\uim\Bul$ to rewrite
$W_m = \ex^{\ad_{Y_m}} X_0 - X_0
+\ex^{\ad_{Y_m}}(I^\bul \Bul)
- F^\bul\uim \Bul$
as
\be   \label{eq:WmexmGE}
W_m = \big( \ex^{-G^\bul\uim} \times E^\bul \big) \Bul,
\qquad
E^\bul \defeq 
- \na_\lam(\ex^{G^\bul\uim}) + I^\bul\times\ex^{G^\bul\uim} 
- \ex^{G^\bul\uim} \times F^\bul\uim.
\ee
Let $C^\bul \defeq F^\bul - F^\bul\uim$, 
$\ti C^\bul \defeq G^\bul-G^\bul\uim$
and $D^\bul \defeq \ex^{G^\bul} - \ex^{G^\bul\uim}$.
Since $-\na_\lam\big(\ex^{G^\bul}\big) +  I^\bul \times \ex^{G^\bul} -
\ex^{G^\bul} \times F^\bul = 0$, we get
\be   \label{eq:EDC}
E^\bul =
\na_\lam D^\bul - I^\bul\times D^\bul
+ D^\bul\times F^\bul + \ex^{G^\bul\uim} \times C^\bul.
\ee


\begin{lemma}   \label{lem:MuimN}
(i) Suppose $M^\bul \in \kk^\UN$ and $M^\bul\uim = 0$. 
Then $M^\bul\Bul \in \cL\ugeq m$.

\noindent (ii)
Suppose $M^\bul, N^\bul \in \kk^\UN$ and $M^\bul\uim = 0$. 
Then $(M^\bul\times N^\bul)\uim = (N^\bul\times M^\bul)\uim = 0$.
\end{lemma}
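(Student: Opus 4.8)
The plan is to establish both parts by unwinding the definitions of the truncation map $M^\bul \mapsto M^\bul\uim$, the order function~\eqref{eqdefordmould}, and the contraction~\eqref{eqdefgencontr}. For part~(i), the hypothesis $M^\bul\uim = 0$ means precisely that $M^\un = 0$ for every word~$\un$ with $r(\un) < m$ \emph{and} all letters in~$\cN_m$; equivalently, $M^\un \ne 0$ forces either $r(\un)\ge m$ or at least one letter $n_i\notin\cN_m$, i.e.\ $\ord(B_{n_i})\ge m$. In the first case $\ord(\Bun)\ge r(\un)\ge m$ by Definition~\ref{DefCFLA}; in the second case, since $\Bun$ is an iterated bracket involving all the $B_{n_i}$ and the filtration is compatible with the bracket, $\ord(\Bun) \ge \sum_{i} \ord(B_{n_i}) \ge \ord(B_{n_i})\ge m$. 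So each nonzero term $\tfrac{1}{r(\un)} M^\un\,\Bun$ of the formally summable family defining $M^\bul\Bul$ has order $\ge m$, hence lies in $\cL\ugeq m$; since $\cL\ugeq m$ is closed (it is the $m$-ball around~$0$ for the distance, or simply a term of a separated filtration in a complete space), the sum $M^\bul\Bul$ lies in $\cL\ugeq m$ as well.

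For part~(ii), I would argue directly from the definition of mould multiplication~\eqref{eqdefmouldmult}: for a word $\un = \ua\,\ub$ contributing to $(M^\bul\times N^\bul)^\un = \sum_{\un=\ua\,\ub} M^\ua N^\ub$, any decomposition with $M^\ua\ne0$ has, by hypothesis, either $r(\ua)\ge m$ (so $r(\un)\ge r(\ua)\ge m$) or some letter of~$\ua$ outside $\cN_m$ (so some letter of~$\un$ outside $\cN_m$). In either case $\un$ is not of the form ``length $<m$ with all letters in $\cN_m$'', so $(M^\bul\times N^\bul)^\un\uim = 0$; and the argument is symmetric in the two factors because each nonzero term $M^\ua N^\ub$ with $M^\ua \ne 0$ is handled identically whether $M^\ua$ sits on the left or on the right. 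It may be cleanest to phrase this as: the truncation map is a projection onto the subalgebra $\kk^{(\cN_m, <m)}$ of moulds supported on short words over $\cN_m$, and the hypothesis says $M^\bul$ lies in the complementary ideal, which is a two-sided ideal because the support condition ``$r(\un)\ge m$ or a letter outside $\cN_m$'' is stable under concatenation on either side.

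I do not anticipate a serious obstacle here: both parts are essentially bookkeeping with the filtration and the concatenation structure of words. The one point requiring a little care is the closedness of $\cL\ugeq m$ used in part~(i) to pass from ``each term has order $\ge m$'' to ``the formally summable sum has order $\ge m$'' — but this is immediate from the definition of the order function together with the fact that, along any finite exhaustion of the support, the partial sums already lie in $\cL\ugeq m$ and $\ord$ is continuous (indeed $\ord(\,\cdot\,)\ge m \Leftrightarrow d(\,\cdot\,,0)\le 2^{-m}$, a closed condition). This is exactly the mechanism already invoked implicitly in~\eqref{ineqcompatord}, so I would just cite that compatibility: $\ord(M^\bul\Bul)\ge\ord(M^\bul)$, combined with $\ord(M^\bul)\ge m$ whenever $M^\bul\uim=0$, gives~(i) in one line. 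The only reason to spell out the bracket-order estimate is to make the ``letter outside $\cN_m$'' case visible, but it too is subsumed by~\eqref{ineqcompatord} once one notes $M^\bul\uim=0 \Rightarrow \ord(M^\bul)\ge m$ is \emph{false} in general (a word with a long letter can be short), so in fact one must keep the direct term-by-term argument for part~(i) rather than relying on~\eqref{ineqcompatord} alone.
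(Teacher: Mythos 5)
Your proof is correct and follows essentially the same route as the paper's: for (i) a term-by-term order estimate (your $\ord(\Bun)\ge\sum_i\ord(B_{n_i})$ is equivalent, given $\ord(B_{n_i})\ge1$, to the paper's $\ord(\Bun)\ge\max\{r,\ord(B_{n_1}),\ldots,\ord(B_{n_r})\}$), and for (ii) the observation that every prefix or suffix $\ua$ of a word $\un$ with $r(\un)<m$ and letters in $\cN_m$ again has these properties, so $M^\ua=0$ kills every term of the concatenation sum. You also correctly note that one cannot shortcut (i) via~\eqref{ineqcompatord} alone, since $M^\bul\uim=0$ does not imply $\ord(M^\bul)\ge m$, which is exactly why the term-by-term argument is needed.
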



\begin{proof}
Suppose $M^\bul\uim = 0$.

\noindent (i)
For any word $\un = n_1\cdots n_r$, 
$M^\un\neq0$ implies $\max\{r,\ord(B_{n_1}),\ldots,\ord(B_{n_r})\} \ge m$,
but $\ord(\Bun) \ge \max\{r,\ord(B_{n_1}),\ldots,\ord(B_{n_r})\}$,
hence  $\ord(M^\un \Bun) \ge m$ in all cases.

\noindent (ii)
Suppose $\un = n_1\cdots n_r$ with $r<m$ and $n_1,\ldots,n_r\in\cN_m$.
We have $(M^\bul\times N^\bul)^\un = \sum M^\ua N^\ub$ with summation
over all pairs of words such that $\ua\,\ub=\un$, which entails $M^\ua=0$
in each term of the sum, and similarly for $N^\bul\times M^\bul$.
\end{proof}


We have $C^\bul\uim = \ti C^\bul\uim = 0$, and 
$D^\bul = \sum_{k\ge0} \frac{1}{(k+1)!} 
\big( (G^\bul)^{\times(k+1)} - (G^\bul\uim)^{\times(k+1)} \big)$
with 
\[
(G^\bul)^{\times(k+1)} - (G^\bul\uim)^{\times(k+1)} =
\sum_{k=p+q} (G^\bul\uim)^{\times p} 
\times \ti C^\bul \times (G^\bul)^{\times q}
\quad\text{for each $k\ge0$,}
\]
whence $D^\bul\uim=0$ by Lemma~\ref{lem:MuimN}(ii).
In view of~\eqref{eq:EDC}, it follows, again by Lemma~\ref{lem:MuimN}(ii), that
$\big( \ex^{-G^\bul\uim} \times E^\bul \big)\uim = 0$,
whence $W_m \in \cL\ugeq m$ by~\eqref{eq:WmexmGE} and Lemma~\ref{lem:MuimN}(i).

\section{Resolution of the mould equation and proof of Theorem~\ref{thmB}}   \label{secpfThmB}

With the view of proving Theorem~\ref{thmB},
we now give ourselves a nonempty set~$\cN$, a field~$\kk$ of
characteristic zero and a map $\lam \col \cN \to \kk$.

Part~(i) of the statement of Theorem~\ref{thmB} requires that,
for each $A^\bul \in \Alt\RES(\cN)$,
we prove the existence and uniqueness of a pair
$(F^\bul,G^\bul) \in \Alt(\cN) \times \Alt(\cN)$ solving
\eqref{eqmouldFG}--\eqref{eqjaugeGA}.
As explained in Section~\ref{paragaSymMouldEq},
with the change of unknown mould $S^\bul \defeq \ex^{G^\bul}$,
this is equivalent to proving the existence and uniqueness of a pair
$(S^\bul,F^\bul) \in \Sym(\cN)\times\Alt(\cN)$ solving
equation~\eqref{eqmouldFS} and satisfying
\be   \label{eqAjaugeS}
\lres{ \inv S^\bul \times \na_1 S^\bul } = A^\bul.
\ee

Heuristically, here is what happens: 
it is easy to see that,
apart from the exceptional case in which $\lam(\un)\neq0$ for every
nonempty word~$\un$ (in which case $\Alt\RES(\cN) = \{0\}$ and there
is a unique solution $(S^\bul,F^\bul)$ to~\eqref{eqmouldFS} in
$\kk^\UN \times \kk^\UN$ such that $S^\est = 1$),
equation~\eqref{eqmouldFS} has in general infinitely many solutions
$(S^\bul,F^\bul) \in \kk^\UN \times \kk^\UN$ such that $S^\est = 1$
(because one is free to assign an arbitrary value to $S^\un$ whenever $\lam(\un)=0$),
but what is not obvious is the existence of at least one solution
\emph{with~$S^\bul$ symmetral and~$F^\bul$ alternal};
adding the requirement~\eqref{eqAjaugeS} removes the freedom: then we
get a unique solution $(S^\bul,F^\bul)$ in $\kk^\UN \times \kk^\UN$ such that $S^\est = 1$,
and we are left with the problem of proving that this solution is in
$\Sym(\cN)\times\Alt(\cN)$.
This will follow from the alternality of~$A^\bul$ at the price of an
excursion in the space of ``dimoulds''.


\subsection{The associative algebra of dimoulds}   \label{secexcursusdim}
$ $\\[-2.75ex]

\noindent
%
The material in this section is essentially taken from \cite{mouldSN}.


We call \emph{dimould}
any map $\UN\times\UN \to \kk$.
We denote by~$M^\bbul$ the dimould whose value on a pair of words
$(\ua,\ub)$ is $M^{\ua,\ub}$.
The set $\kUNUN$ of all dimoulds is clearly a linear space over~$\kk$,
it is also an associative $\kk$-algebra for the
\emph{dimould multiplication} $(M^\bbul,N^\bbul) \mapsto P^\bbul = M^\bbul
\times N^\bbul$ defined by a formula analogous
to~\eqref{eqdefmouldmult}:
\[
P^{\ua,\ub} \defeq 
\sum_{(\ua,\ub) = (\ua^1,\ub^1) (\ua^2,\ub^2)} M^{(\ua^1,\ub^1)} N^{(\ua^2,\ub^2)},
\]
where the concatenation in $\UN\times\UN$ is defined by
$(\ua^1,\ub^1) (\ua^2,\ub^2) = (\ua^1 \, \ua^2, \ub^1 \, \ub^2)$.

Examples of dimoulds are the \emph{decomposable dimoulds}, namely the
dimoulds of the form
\[
P^\bbul = M^\bul \otimes N^\bul,
\]
where it is meant that~$M^\bul$ and~$N^\bul$ are (ordinary) moulds and
$P^{\ua,\ub} = M^\ua N^\ub$. Note that
\be   \label{eqidmultdecdimould}
(M_1^\bul \otimes N_1^\bul) \times (M_2^\bul \otimes N_2^\bul) =
(M_1^\bul \times M_2^\bul) \otimes (N_1^\bul \times N_2^\bul) 
\ee
for any four moulds $M_1^\bul,N_1^\bul,M_2^\bul,N_2^\bul$.

Using the shuffling coefficients of Definition~\ref{defalt}, we define
a linear map
\be   \label{eqdefcoprodDe}
\De \col M^\bul \in \kk^\UN \mapsto P^\bbul = \De(M^\bul) \in \kUNUN
\ee
as follows:
\be   \label{eqdefcoprodDeuaub}
P^{\ua,\ub} \defeq \sum_{\un\in\UN} \shabn M^\un
\quad \text{for any $(\ua,\ub) \in \UN\times\UN$.}
\ee
We thus can rephrase the definition of alternality given in
Definition~\ref{defalt} and the definition of symmetrality given in~\eqref{eqdefstdsym}:
\begin{align}
&\text{\emph{A mould~$M^\bul$ is alternal if and only if
$\De(M^\bul) = M^\bul\otimes 1^\bul + 1^\bul\otimes M^\bul$.}} \\[1ex]
&\text{\emph{It is symmetral if and only if $M^\est=1$ and
$\De(M^\bul) = M^\bul \otimes M^\bul$.}}
\end{align}
It is proved in \cite[Sec.~5.2]{mouldSN} that%
\footnote{In this paper we have denoted by~$\De$ the map which was denoted by~$\tau$
in \cite{mouldSN}, because this map is essentially the coproduct of a Hopf algebra
structure that one can define and the notation~$\De$ is more common for coproducts.}
\be    \label{eqcoprDealgmorph}
\text{\emph{$\De \col \kk^\UN \to \kUNUN$ is an associative algebra morphism.}}
\ee


We end this section with an example of \emph{dimould derivation},
\ie a derivation of the dimould algebra $\kUNUN$.
%
%
\begin{lemma}    \label{lemdimderiv}
Let $\ph\col\cN\to\kk$ denote an abitrary function,
extended to~$\UN$ by~\eqref{eqextendph}.
Then the formula
\[
\wt\na_\ph \col P^\bbul \mapsto Q^\bbul, \qquad
Q^{\ua,\ub} \defeq \big( \ph(\ua)+\ph(\ub) \big) P^{\ua,\ub}
\quad \text{for all $\ua,\ub \in \UN$}
\]
defines a $\kk$-linear operator~$\wt\na_\ph$ of $\kUNUN$ which is a
dimould derivation and satisfies
\begin{align}
\label{eqtiDphMN}
& \wt\na_\ph (M^\bul\otimes N^\bul) = 
(\na_\ph M^\bul)\otimes N^\bul + M^\bul\otimes \na_\ph N^\bul \\[1ex]
\label{eqDeDtiD}
& \De(\na_\ph M^\bul) = \wt\na_\ph \De(M^\bul)
\end{align}
for any two moulds $M^\bul$ and~$N^\bul$,
where~$\na_\ph$ is the mould derivation defined by~\eqref{eqdefnaph}.
\end{lemma}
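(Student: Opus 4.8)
The plan is to verify directly from the explicit formula for $\wt\na_\ph$ that it is a dimould derivation, then check the two compatibility identities~\eqref{eqtiDphMN} and~\eqref{eqDeDtiD} by unwinding definitions. First I would observe that $\wt\na_\ph$ is manifestly $\kk$-linear, since it acts on $P^{\ua,\ub}$ by multiplication by the scalar $\ph(\ua)+\ph(\ub)$, which does not depend on~$P$. To see the Leibniz rule, I would take two dimoulds $M^\bbul$ and $N^\bbul$ and compute the value of $\wt\na_\ph(M^\bbul\times N^\bbul)$ on a pair $(\ua,\ub)$: by definition this is $\big(\ph(\ua)+\ph(\ub)\big)\sum M^{(\ua^1,\ub^1)}N^{(\ua^2,\ub^2)}$, the sum being over decompositions $(\ua,\ub)=(\ua^1\,\ua^2,\ub^1\,\ub^2)$. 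The crucial point is the additivity of the monoid morphism~$\ph$ in each slot, namely $\ph(\ua)=\ph(\ua^1)+\ph(\ua^2)$ and $\ph(\ub)=\ph(\ub^1)+\ph(\ub^2)$ whenever $\ua=\ua^1\,\ua^2$ and $\ub=\ub^1\,\ub^2$; substituting and splitting the scalar factor as $\big(\ph(\ua^1)+\ph(\ub^1)\big)+\big(\ph(\ua^2)+\ph(\ub^2)\big)$ produces exactly the two terms $\wt\na_\ph(M^\bbul)\times N^\bbul + M^\bbul\times\wt\na_\ph(N^\bbul)$ evaluated at $(\ua,\ub)$.

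Next I would establish~\eqref{eqtiDphMN} by evaluating both sides on an arbitrary pair $(\ua,\ub)$: the left-hand side gives $\big(\ph(\ua)+\ph(\ub)\big)M^\ua N^\ub$, while the right-hand side gives $\ph(\ua)M^\ua N^\ub + M^\ua\ph(\ub)N^\ub$, using the definition~\eqref{eqdefnaph} of~$\na_\ph$; these coincide. (Alternatively, this is the special case of the Leibniz rule applied to the decomposable dimould $M^\bul\otimes N^\bul = (M^\bul\otimes 1^\bul)\times(1^\bul\otimes N^\bul)$, after noting $\wt\na_\ph(1^\bul\otimes N^\bul) = 1^\bul\otimes\na_\ph N^\bul$ and similarly in the other slot, since $\ph(\est)=0$.) For the identity~\eqref{eqDeDtiD}, I would evaluate $\De(\na_\ph M^\bul)$ on $(\ua,\ub)$ using~\eqref{eqdefcoprodDeuaub}: it equals $\sum_{\un\in\UN}\shabn\,(\na_\ph M)^\un = \sum_{\un}\shabn\,\ph(\un)M^\un$. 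The key observation is that $\shabn\neq0$ forces~$\un$ to be a shuffle of~$\ua$ and~$\ub$, hence to contain the same multiset of letters, so $\ph(\un)=\ph(\ua)+\ph(\ub)$ for every~$\un$ contributing to the sum; pulling this constant out yields $\big(\ph(\ua)+\ph(\ub)\big)\sum_{\un}\shabn M^\un = \big(\ph(\ua)+\ph(\ub)\big)\De(M^\bul)^{\ua,\ub}$, which is precisely $\wt\na_\ph\De(M^\bul)$ evaluated at $(\ua,\ub)$.

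I do not expect any serious obstacle here: the statement is a bookkeeping lemma and every step reduces to the additivity of~$\ph$ as a monoid morphism combined with the fact that shuffling preserves the letter content of words. The only point requiring a moment's care is making the letter-content observation precise—that $\shabn\neq0$ implies $r(\un)=r(\ua)+r(\ub)$ and that the letters of~$\un$ are exactly those of~$\ua$ together with those of~$\ub$ (with multiplicity)—which is immediate from Definition~\ref{def:shufflcoeff}, since any $\tau\in\fS_r(\ell)$ witnessing $\shabn\neq0$ exhibits~$\un$ as an interleaving of~$\ua$ and~$\ub$. Everything else is a direct substitution. I would write the proof compactly, doing the Leibniz computation on a general pair $(\ua,\ub)$ and then dispatching~\eqref{eqtiDphMN} and~\eqref{eqDeDtiD} each in one or two lines.
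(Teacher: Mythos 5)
Your proof is correct and follows exactly the route the paper intends: the paper leaves this lemma to the reader with precisely the hint that $\shabn\neq0$ implies $\ph(\ua)+\ph(\ub)=\ph(\un)$, which is the key observation you use for~\eqref{eqDeDtiD}, and your Leibniz and~\eqref{eqtiDphMN} verifications are the same direct computations via additivity of~$\ph$ on concatenations. Nothing to add.
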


The proof of Lemma~\ref{lemdimderiv} is left to the reader
(use $\shabn \neq0 \;\Rightarrow\; \ph(\ua)+\ph(\ub) =
\ph(\un)$ for the last property).


\subsection{Proof of Part~(i) of Theorem~\ref{thmB}}
$ $\\[-2.75ex]

\noindent
%
Let $A^\bul \in \Alt\RES(\cN)$.
As explained at the beginning of Section~\ref{secpfThmB}, the strategy
is first to check the existence and uniqueness of a pair of moulds
$(S^\bul,F^\bul) \in \kk^\UN \times \kk^\UN$
satisfying~\eqref{eqmouldFS} and~\eqref{eqAjaugeS} and $S^\est=1$, 
and then to prove (with the help of dimoulds) that
$(S^\bul,F^\bul) \in \Sym(\cN)\times\Alt(\cN)$.


\parag
Let us introduce an extra unknown mould
$N^\bul = \inv S^\bul\times\na_1 S^\bul$, so that finding a solution
$(S^\bul,F^\bul)$ to~\eqref{eqmouldFS} and~\eqref{eqAjaugeS} is
equivalent to finding a solution $(S^\bul,F^\bul,N^\bul)$ to the
system of equations
\begin{align}
\label{eqnaThIThR}
\na_\lam S^\bul &= I^\bul\times S^\bul-S^\bul\times F^\bul \\[1ex]
\label{eqcLThN}
\na_1 S^\bul &= S^\bul \times N^\bul \\[1ex]
\label{eqnaRiszero}
\na_\lam F^\bul &= 0 \\[1ex]
\label{eqNresisA}
N^\bul\RES &= A^\bul.
\end{align}
The system \eqref{eqnaThIThR}--\eqref{eqNresisA}, in presence of the
condition $S^\est=1$, amounts to
$F^\est = N^\est = 0$ and, for each nonempty word~$\un$,
\begin{align}
\label{equlanaThIThR}
& \lam(\un) \, S^\un + F^\un = 
S^{`\un} - \sum_{\un=\ua\,\ub,\  \ua,\ub\neq\est}
%
%
S^\ua \, F^\ub \\[1ex]
\label{equlacLThN}
& r(\un) \, S^\un - N^\un = \sum_{\un=\ua\,\ub,\ \ua,\ub\neq\est}
%
%
S^\ua \, N^\ub \\[1ex]
\label{equlanaRiszero}
& \lam(\un)\neq0 \ens\Rightarrow\ens F^\un = 0 \\[1ex]
\label{equlaNresisA}
& \lam(\un)=0 \ens\Rightarrow\ens N^\un = A^\un
\end{align}
with~$`\un$ denoting the word~$\un$ deprived from its first letter.


We thus find a unique solution by induction on $r(\un)$:
we must take $S^\est=1$, $F^\est = N^\est = 0$ and, for $r(\un)\ge1$,
\begin{align}
\label{eqinducNRbis}
\lam(\un)\neq0 &\ens\Rightarrow\ens
\left\{ \begin{aligned}
F^\un &= 0 \\[1ex]
S^\un &= \frac{1}{\lam(\un)} \Big( S^{`\un} -
\sum_{\un=\ua\,\ub,\  \ua,\ub\neq\est}
S^\ua \, F^\ub \Big)
\\[1.5ex]
N^\un &= r(\un)\,S^\un - \sum_{\un=\ua\,\ub,\ \ua,\ub\neq\est}
S^\ua \, N^\ub
\end{aligned} \right.
\\[2ex]
\label{eqinducRESbis}
\lam(\un)=0 &\ens\Rightarrow\ens
\left\{ \begin{aligned}
F^\un &= S^{`\un} -
\sum_{\un=\ua\,\ub,\  \ua,\ub\neq\est}
S^\ua \, F^\ub \\[1ex]
N^\un &= A^\un \\[1ex]
S^\un &= \frac{1}{r(\un)} \Big( A^\un + 
\sum_{\un=\ua\,\ub,\ \ua,\ub\neq\est}
S^\ua \, N^\ub \Big).
\end{aligned} \right.
\end{align}


\parag
We now check that, in the unique solution constructed above, $S^\bul$
is symmetral and~$F^\bul$ is alternal.
Making use of the dimould formalism of Section~\ref{secexcursusdim},
and in particular of the associative algebra morphism~$\De$ defined by
\eqref{eqdefcoprodDe}--\eqref{eqdefcoprodDeuaub},
we set
\[
A^\bbul \defeq \De(A^\bul), \quad
S^\bbul \defeq \De(S^\bul), \quad
F^\bbul \defeq \De(F^\bul), \quad
N^\bbul \defeq \De(N^\bul).
\]
Our assumption amounts to 
$A^\bbul = A^\bul\otimes 1^\bul + 1^\bul\otimes A^\bul$ 
and we are to prove $S^\bbul = S^\bul\otimes S^\bul$
and $F^\bbul = F^\bul\otimes 1^\bul + 1^\bul\otimes F^\bul$.
Note that $S^{\est,\est} = S^\est = 1$.


In view of Lemma~\ref{lemdimderiv}, the dimould
derivations~$\wt\na_\lam$ and~$\wt\na_1$ are defined by
\[
\wt\na_\lam M^{\ua,\ub} \defeq \big( \lam(\ua)+\lam(\ub) \big) M^{\ua,\ub}
\quad
\text{and}
\quad
\wt\na_1 M^{\ua,\ub} \defeq \big( r(\ua)+r(\ub) \big) M^{\ua,\ub}
\quad\text{for all $\ua,\ub \in \UN$}
\]
for any dimould~$M^\bbul$.
Applying~$\De$ to each equation of the system
\eqref{eqnaThIThR}--\eqref{eqNresisA}, we get
\begin{align}
\label{eqtinaThIThR}
\wt\na_\lam S^\bbul &= \De(I^\bul)\times S^\bbul-S^\bbul\times F^\bbul \\[1ex]
\label{eqticLThN}
\wt\na_1 S^\bbul &= S^\bbul \times N^\bbul \\[1ex]
\label{eqtinaRiszero}
\wt\na_\lam F^\bbul &= 0 \\[1ex]
\label{eqtiNresisA}
N^\bbul\RES &= A^\bbul.
\end{align}
Here we have used the associative algebra morphism
property~\eqref{eqcoprDealgmorph} of~$\De$ and the
identity~\eqref{eqDeDtiD} with~$\na_\lam$ and~$\na_1$;
moreover, we have denoted by~$N^\bbul\RES$ the
resonant part of the dimould~$N^\bbul$ defined by
\[
N^{\ua,\ub}\RES \defeq \IND{\lam(\ua)+\lam(\ub)=0} \, N^{\ua,\ub} 
\qquad\text{for any $(\ua,\ub)\in\UN\times\UN$}
\]
and used the obvious identity $\big(\De(N^\bul)\big)\RES =
\De(N^\bul\RES)$
(due to the fact that $\shabn \neq0 \;\Rightarrow\; \lam(\ua)+\lam(\ub) =
\lam(\un)$).

We now observe that the system of dimould equations
\eqref{eqtinaThIThR}--\eqref{eqtiNresisA} has a unique solution
$(S^\bbul,F^\bbul,N^\bbul)$ such that $S^{\est,\est}=1$.
Indeed, these equations entail $F^{\est,\est} = N^{\est,\est} = 0$
and, by evaluating them on a pair of words $(\ua,\ub) \neq (\est,\est)$,
we get equations analogous to
\eqref{equlanaThIThR}--\eqref{equlaNresisA} which allow to determine 
$S^{\ua,\ub}$, $F^{\ua,\ub}$ and $N^{\ua,\ub}$
by induction on $r(\ua)+r(\ub)$ (distinguishing the cases $\lam(\ua)+\lam(\ub)=0$ or
$\neq0$).

Since $\De(I^\bul) = I^\bul\otimes 1^\bul + 1^\bul\otimes I^\bul$ and
$A^\bbul = A^\bul\otimes 1^\bul + 1^\bul\otimes A^\bul$,
it is easy to check directly that
$(S^\bul\otimes S^\bul, 
F^\bul\otimes 1^\bul + 1^\bul\otimes F^\bul,
N^\bul\otimes 1^\bul + 1^\bul\otimes N^\bul)$ 
is a solution of the system \eqref{eqtinaThIThR}--\eqref{eqtiNresisA}
with the initial condition $(S^\bul\otimes S^\bul)^{\est,\est}=1$
(one just has to use \eqref{eqidmultdecdimould}, \eqref{eqtiDphMN},
\eqref{eqnaThIThR}--\eqref{eqNresisA} and the identities
$(N^\bul\otimes 1^\bul)\RES = N^\bul\RES\otimes 1^\bul$,
$(1^\bul\otimes N^\bul)\RES = 1^\bul\otimes N^\bul\RES$).

The uniqueness of the solution of the system of dimould equations implies
\[
(S^\bbul,F^\bbul,N^\bbul)
= (S^\bul\otimes S^\bul, 
F^\bul\otimes 1^\bul + 1^\bul\otimes F^\bul,
N^\bul\otimes 1^\bul + 1^\bul\otimes N^\bul)
\]
in particular~$S^\bul$ is symmetral and~$F^\bul$ is alternal.


\parag
The induction formulas \eqref{eqinducNRbis}--\eqref{eqinducRESbis}
that we have obtained for~$F^\bul$ and~$S^\bul$
coincide with
\eqref{eqinducNR}--\eqref{eqinducRES}.
Setting $G^\bul = \log S^\bul$, we get an alternal mould, inductively
determined by~\eqref{eqGlogS}.

This ends the proof of Part~(i) of Theorem~\ref{thmB}.


\subsection{Proof of Part~(ii) of Theorem~\ref{thmB}}
$ $\\[-4ex]

%
\parag
Recall that the mould exponential
$G^\bul \mapsto S^\bul = \ex^{G^\bul}$ is a bijection between the set
of all moulds~$G^\bul$ such that $G^\est = 0$ and the set of all
moulds~$S^\bul$ such that $S^\est = 1$, which induces a bijection
$\Alt(\cN)\to\Sym(\cN)$.
There is thus a bijection between the solutions
$(F^\bul,G^\bul) \in \kk^\UN\times\kk^\UN$ to
equation~\eqref{eqmouldFG} such that $G^\est = 0$ and the solutions
$(F^\bul,S^\bul) \in \kk^\UN\times\kk^\UN$ to
equation~\eqref{eqmouldFS} such that $S^\est = 1$.
We rewrite equation~\eqref{eqmouldFS} as
\begin{align}
\label{eqFintermsofS}
& F^\bul = 
  \inv S^\bul \times I^\bul \times S^\bul 
  - \inv S^\bul \times \na_\lam S^\bul, \\[1ex]
\label{eqnalamF}
& \na_\lam F^\bul = 0.
\end{align}


Starting with a solution $(F^\bul,G^\bul) \in \Alt(\cN) \times \Alt(\cN)$
to~\eqref{eqmouldFG} and setting
$S^\bul \defeq \ex^{G^\bul} \in \Sym(\cN)$,
we get a solution $(F^\bul,S^\bul) \in \Alt(\cN) \times \Sym(\cN)$
to \eqref{eqFintermsofS}--\eqref{eqnalamF};
using the change $K^\bul = \ex^{J^\bul}$ 
(as in Section~\ref{paragaJaugeK}), 
we are asked to prove that the map
\be   \label{eqdeftiFtiS}
K^\bul \mapsto (\ti F^\bul, \ti S^\bul) = 
\big( \inv K^\bul \times F^\bul \times K^\bul,
\, S^\bul \times K^\bul \big) 
\ee
establishes a one-to-one correspondence between $\Sym\RES(\cN)$ and the set of all solutions
$(\ti F^\bul, \ti S^\bul) \in \Alt(\cN)\times\Sym(\cN)$ to \eqref{eqFintermsofS}--\eqref{eqnalamF},
and that
\be   \label{eqvarjauge}
\lres{ \inv \ti S^\bul \times \na_1 \ti S^\bul } = 
\inv K^\bul \times \JJ(G^\bul) \times K^\bul
+ \inv K^\bul \times \na_1 K.
\ee


\parag
Suppose that $K^\bul \in \Sym\RES(\cN)$ and define $(\ti F^\bul, \ti
S^\bul)$ by~\eqref{eqdeftiFtiS}.
Since $\ti S^\bul = S^\bul \times K^\bul$, this mould is symmetral
(recall that $( \Sym(\cN), \times )$ is a group---see \eg \cite[Prop.~5.1]{mouldSN});
since $\na_\lam$ is a derivation which annihilates~$K^\bul$, we have 
$\na_\lam \ti S^\bul = (\na_\lam S^\bul)\times K^\bul$ and
\[
\inv \ti S^\bul \times I^\bul \times \ti S^\bul - \inv \ti S \times\na_\lam\ti S
= \inv K^\bul \times \big(
\inv S^\bul \times I^\bul \times  S^\bul - \inv S \times\na_\lam S
\big) \times K^\bul,
\]
which, by~\eqref{eqFintermsofS}, is $\inv K^\bul \times F^\bul
\times K^\bul = \ti F^\bul$.
Thus, $(\ti F^\bul, \ti S^\bul)$ satisfies~\eqref{eqFintermsofS}.

On the other hand, by~\eqref{defSymexpAlt} and Proposition~\ref{propLieMCun}(ii), 
$\ti F = \inv K^\bul \times F^\bul \times K^\bul$ is alternal. 
It is easy to check that~$\ti F^\bul$ satisfies~\eqref{eqnalamF}
because~$F^\bul$ satisfies~\eqref{eqnalamF}:
$0 = \inv K^\bul \times \na_\lam F^\bul \times K^\bul = \na_\lam\ti F$.
It is so because $\na_\lam$ is derivation which annihilates both~$K^\bul$ and~$\inv K^\bul$;
the fact that also~$\inv K^\bul$ is $\lam$-resonant (\ie $\na_\lam \inv K^\bul=0$) is an elementary
property of $\lam$-resonant moulds, which is part of
\begin{lemma}   \label{lemelemptyres}
Suppose that $M^\bul$ is a $\lam$-resonant mould. Then also $\na_1
M^\bul$ is $\lam$-resonant, and
\[
\lres{ M^\bul\times N^\bul } = M^\bul \times N^\bul\RES, \quad
%
%
\lres{ N^\bul\times M^\bul } = N^\bul\RES \times M^\bul
\qquad \text{for any mould~$N^\bul$.}
\]
If moreover~$M^\bul$ is invertible, then also $\inv M^\bul$ is $\lam$-resonant.
\end{lemma}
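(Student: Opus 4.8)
The plan is to verify the three assertions by directly unwinding the definitions; the ingredients are entirely elementary, the key facts being that $\lam$ and $r$ are additive under word concatenation ($\lam(\ua\,\ub)=\lam(\ua)+\lam(\ub)$, $r(\ua\,\ub)=r(\ua)+r(\ub)$) and that $\na_\lam$ is a derivation of the associative algebra~$\kk^\UN$.

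First, for the claim that $\na_1 M^\bul$ is again $\lam$-resonant: since $\na_1$ and $\na_\lam$ are both diagonal in the basis of words — they multiply the value on~$\un$ by $r(\un)$, resp.\ by $\lam(\un)$ — they commute, so $\na_\lam M^\bul=0$ forces $\na_\lam(\na_1 M^\bul)=\na_1(\na_\lam M^\bul)=0$. (Equivalently: $\na_1$ does not enlarge the support of a mould, so its support stays inside $\{\,\un\mid\lam(\un)=0\,\}$.)

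Next I would treat the two product formulas, proving $\lres{M^\bul\times N^\bul}=M^\bul\times N^\bul\RES$, the other being symmetric. Evaluating on a word~$\un$ gives $\lres{M^\bul\times N^\bul}^\un=\IND{\lam(\un)=0}\sum_{\un=\ua\,\ub}M^\ua N^\ub$. In each term of this sum, if $M^\ua\neq0$ then $\lam(\ua)=0$ by $\lam$-resonance of~$M^\bul$, hence $\lam(\un)=\lam(\ua)+\lam(\ub)=\lam(\ub)$, and therefore $\IND{\lam(\un)=0}\,M^\ua N^\ub=\IND{\lam(\ub)=0}\,M^\ua N^\ub$; this identity also holds trivially when $M^\ua=0$. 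Summing over all decompositions $\un=\ua\,\ub$ then gives $\sum_{\un=\ua\,\ub}M^\ua(N^\ub\RES)=(M^\bul\times N^\bul\RES)^\un$, as wanted.

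Finally, for the invertibility statement: applying the derivation $\na_\lam$ to the identity $M^\bul\times\inv M^\bul=1^\bul$ yields $(\na_\lam M^\bul)\times\inv M^\bul+M^\bul\times\na_\lam(\inv M^\bul)=0$, that is $M^\bul\times\na_\lam(\inv M^\bul)=0$ since $\na_\lam M^\bul=0$; left-multiplying by $\inv M^\bul$ gives $\na_\lam(\inv M^\bul)=0$, \ie $\inv M^\bul$ is $\lam$-resonant. I do not anticipate any genuine obstacle: the whole lemma is bookkeeping whose single substantive point is that $\lam$ is a monoid morphism, which is precisely what makes the $\lam$-resonant part interact well with mould multiplication.
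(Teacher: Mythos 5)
Your proof is correct, and since the paper explicitly leaves this lemma to the reader, there is no competing argument to compare it with: your direct verification — commuting the diagonal operators $\na_1$ and $\na_\lam$, pulling the indicator $\IND{\lam(\un)=0}$ through the concatenation sum using $\lam(\ua\,\ub)=\lam(\ua)+\lam(\ub)$, and applying the derivation $\na_\lam$ to $M^\bul\times\inv M^\bul=1^\bul$ — is exactly the intended elementary bookkeeping.
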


The proof of Lemma~\ref{lemelemptyres} is left to the reader.
\smallskip

We now compute the gauge generator of $\log \ti S^\bul$:
by Lemma~\ref{lemelemptyres}, the $\lam$-resonant part of
\[
\inv \ti S^\bul \times \na_1 \ti S^\bul = 
\inv K^\bul \times \inv S^\bul \times \big(
(\na_1 S^\bul)\times K^\bul + S^\bul \times \na_1 K^\bul \big)
\]
is 
$\inv K^\bul\times\lres{ \inv S^\bul \times \na_1 S^\bul }\times K^\bul
+ \inv K^\bul\times\na_1 K^\bul
= \inv K^\bul \times \JJ(G^\bul) \times K^\bul
+ \inv K^\bul \times \na_1 K^\bul$.
This is~\eqref{eqvarjauge}.


\parag
Conversely, suppose that
$(\ti F^\bul, \ti S^\bul) \in \Alt(\cN)\times\Sym(\cN)$ is a solution
to \eqref{eqFintermsofS}--\eqref{eqnalamF}.
We define $K^\bul \defeq \inv S^\bul \times \ti S^\bul \in \Sym(\cN)$.
Inserting
\be   \label{eq:tiSSK}
\ti S^\bul = S^\bul \times K^\bul
\ee
in $\ti F^\bul =   \inv \ti S^\bul \times I^\bul \times \ti S^\bul 
  - \inv \ti S^\bul \times \na_\lam \ti S^\bul$,
we get
\be   \label{eq:tiFinvKFK}
\ti F^\bul = \inv K^\bul \times \big(
\inv S^\bul \times I^\bul \times S^\bul \times K^\bul
- \inv S^\bul \times \na_\lam(S^\bul \times K^\bul) \big)
= \inv K^\bul \times ( F^\bul \times K^\bul - \na_\lam K^\bul ),
\ee
\ie $\na_\lam K^\bul = F^\bul \times K^\bul - K^\bul \times \ti F^\bul$.
We are in a position to apply


\begin{lemma}   \label{lem:implyPres}
Suppose that $M^\bul, N^\bul, P^\bul \in \kk^\UN$, $M^\est=N^\est=0$,
$M^\bul$ and $N^\bul$ are $\lam$-resonant and
\be   \label{eq:naPMPPN}
\na_\lam P^\bul = M^\bul \times P^\bul - P^\bul \times N^\bul .
\ee
Then $P^\bul$ is $\lam$-resonant.
\end{lemma}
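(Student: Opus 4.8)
The plan is to show, by induction on the length $r(\un)$, that $P^\un = 0$ whenever $\lam(\un)\neq0$; by the characterization of $\lam$-resonance recalled after Definition~\ref{deflamrespart}, this is exactly the conclusion $\na_\lam P^\bul = 0$. First I would evaluate the defining relation~\eqref{eq:naPMPPN} at an arbitrary word~$\un$, which gives
\[
\lam(\un)\, P^\un = \sum_{\un=\ua\,\ub} M^\ua P^\ub \;-\; \sum_{\un=\ua\,\ub} P^\ua N^\ub ;
\]
since $M^\est = N^\est = 0$, only the decompositions with $\ua\neq\est$ survive in the first sum and only those with $\ub\neq\est$ in the second, so in every surviving term one of the two factors has a strictly shorter word as its argument.

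The key step is then to check that each term on the right vanishes under the hypothesis $\lam(\un)\neq0$. In a term $M^\ua P^\ub$ of the first sum, either $M^\ua = 0$, or $\lam(\ua)=0$ by $\lam$-resonance of $M^\bul$, and then $\lam(\ub) = \lam(\un)-\lam(\ua) = \lam(\un)\neq0$ (using $\lam(\ua\,\ub)=\lam(\ua)+\lam(\ub)$ from~\eqref{eqextendph}); if moreover $\ub\neq\est$ the induction hypothesis applies and gives $P^\ub = 0$, while the boundary term $\ua=\un$, $\ub=\est$ equals $M^\un P^\est$, which vanishes because $\lam(\un)\neq0$ forces $M^\un=0$. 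The second sum is treated symmetrically, with the roles of the two factors and of $M^\bul$, $N^\bul$ interchanged. Consequently $\lam(\un)\,P^\un = 0$, and since $\lam(\un)\neq0$ and $\kk$ is a field we get $P^\un=0$; the case $r(\un)=0$ is vacuous since $\lam(\est)=0$, which anchors the induction.

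I do not expect a genuine obstacle here: the argument is a one-pass induction on word length. The only thing to be careful about is the bookkeeping of the deconcatenation sums---separating the ``interior'' terms, which are handled by the induction hypothesis once one knows that $\lam$ of the shorter word is again nonzero, from the two ``boundary'' terms $\ub=\est$ and $\ua=\est$, which are killed directly by the $\lam$-resonance of $M^\bul$ and $N^\bul$ combined with $\lam(\un)\neq0$.
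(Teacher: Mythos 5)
Your proof is correct and follows essentially the same route as the paper's: induction on word length, evaluating the deconcatenation identity $\lam(\un)P^\un=\sum(M^\ua P^\ub-P^\ua N^\ub)$ and killing every term using the $\lam$-resonance of $M^\bul$, $N^\bul$ together with the induction hypothesis. The only difference is cosmetic bookkeeping (you handle the boundary terms of each sum separately, while the paper first discards trivial decompositions and then splits on whether $\lam(\ua)\neq0$ or $\lam(\ua)=0$).
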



Taking Lemma~\ref{lem:implyPres} for granted, we thus obtain that
$K^\bul$ is $\lam$-resonant, hence $K^\bul \in \Sym\RES(\cN)$, 
and~\eqref{eq:tiFinvKFK} yields $\ti F = \inv K^\bul \times F^\bul \times K^\bul$,
which together with~\eqref{eq:tiSSK} gives $(\ti F^\bul, \ti S^\bul)$
as the image of~$K^\bul$ by the map~\eqref{eqdeftiFtiS}.
The proof of Theorem~\ref{thmB}(ii) is then complete.


\begin{proof}[Proof of Lemma~\ref{lem:implyPres}]
Let us show that
\be   \label{eq:naPunzer}
\lam(\un) P^\un = 0
\ee
for every $\un\in\UN$ by induction on~$r(\un)$.
The property holds for $\un=\est$ or, more generally, for
$\lam(\un)=0$, we thus suppose that $\un\in\UN$ has $r(\un)\ge1$ and $\lam(\un)\neq0$,
and that~\eqref{eq:naPunzer} holds for all words of length $< r(\un)$.
It follows from~\eqref{eq:naPMPPN} that
\be   \label{eq:lamunPun}
\lam(\un) P^\un = \sum_{\un = \ua\,\ub} (M^\ua P^\ub - P^\ua N^\ub)
= \sst\sum_{\un=\ua\,\ub} (M^\ua P^\ub - P^\ua N^\ub),
\ee
where the symbol $\sst\sum$ indicates that we can restrict the
summation to non-trivial decompositions
(it is so because $M^\un = N^\un = 0$, since $\lam(\un)\neq0$, and
$M^\est = N^\est = 0$).
But, in the \rhs\ of~\eqref{eq:lamunPun}, each term between
parentheses vanishes, because either
$\lam(\ua)\neq0$ and $M^\ua=P^\ua=0$ (by the assumption
on~$M^\ua$ and the inductive hypothesis),
or $\lam(\ua)=0$, but then $\lam(\ub)\neq0$ and $M^\ub=P^\ub=0$ (for similar reasons).
\end{proof}


\centerline{\Large\sc Five dynamical applications}
\addcontentsline{toc}{part}{\vspace{-2.1em} \\ \sc Five dynamical applications\vspace{.4em}}

\vspace{.5cm}




%

We now turn to examples of application of Theorem~\ref{thmA}.
The Lie algebras in these examples will consist of 
%
vector fields with their natural Lie brackets $\vf{\cdot\,}\cdot$
%
%
or, in presence of a symplectic structure, 
Hamiltonian functions with the Lie bracket
$\ham{\cdot\,}\cdot \defeq \{\cdot\,,\cdot\}$ (Poisson bracket)
%
%
or, in the quantum case, operators of a Hilbert space
with the Lie bracket
$\qu{\cdot\,}\cdot \defeq \frac{1}{\I\hb}\times\text{commutator}$.
We will deal with formal objects (\ie defined by means of formal
series, either in the dynamical variables or in some external
parameter), and this gives rise to a natural Lie algebra filtration.


\section{Poincar\'e-Dulac normal forms}   \label{secPDNF}


\parage
Let $N\in\N^*$. A formal vector field is the same thing as a
derivation of the algebra of formal series $\C[[z_1,\ldots,z_N]]$ and
is of the form
\[
X = \sum\limits_{j=1}^N v_j(z_1,\ldots,z_N) \pa_{z_j}.
\]
We take $\kk \defeq \C$ and $\cL \defeq $ the Lie algebra of formal
vector fields whose components~$v_j$ have no constant term.
We get a complete filtered algebra by setting $X \in \cL\ugeq m$
if its components~$v_j$, as formal series, have order~$\ge m+1$.

Let $X\in\cL$.
The \emph{formal normalization problem} consists in finding a formal change
of variables which simplifies the expression of~$X$ as much as
possible.
We assume that~$X$ has a diagonal linear part:
\[
X_0 =
\sum_{j=1}^N \om_j z_j \pa_{z_j}
\]
with ``spectrum vector''
$\om = (\om_1,\ldots,\om_N) \in \C^N$.
The components of $B\defeq X-X_0$ have order~$\ge2$, hence,
introducing
\[
\cM \defeq \big\{\, 
(j,k) \in \{1,\ldots,N\} \times \N^N \mid \abs{k}\ge2
\,\big\},
\]
we can write the expansion of~$X-X_0$ as
$B = \sum\limits_{(j,k)\in\cM} b_{j,k} z^k \pa_{z_j}$
with coefficients $b_{j,k} \in \C$.
It turns out that the monomial vector fields $z^k \pa_{z_j}$ are
eigenvectors of $\ad_{X_0}$:
\be   \label{eq:monomev}
\vf{X_0}{z^k \pa_{z_j}} = \big( \scal{k}{\om} - \om_j \big) z^k
\pa_{z_j}
\quad\text{for each $(j,k)\in\cM$}
\ee
(where $\scal{\cdot\,}\cdot$ denotes the standard scalar product),
we thus set
\begin{align*}
\cN &\defeq \big\{\, \scal{k}{\om} - \om_j \mid (j,k)\in\cM 
\;\text{and}\;
b_{j,k} \neq 0 \,\big\} \subset \C,
\\[1ex]
B_\lam &\defeq \sum_{ \substack{(j,k)\in\cM \,\text{such that}\\[.5ex]
    \scal{k}{\om} - \om_j=\lam} }
b_{j,k} z^k \pa_{z_j}
\quad\text{for each $\lam\in\cN$,}
\end{align*}
so that $X=X_0+\sum\limits_{\lam\in\cN}B_\lam$ 
and $\vf{X_0}{B_\lam} = \lam B_\lam$ for each $\lam\in\cN$.


\parage
Let us apply Theorem~\ref{thmA}:
with each choice of $A^\bul\in\Alt_0(\cN)$ is associated a pair of
alternal moulds, $F^\bul$ and~$G^\bul$ explicitly given by
\eqref{eqinducinit}--\eqref{eqGlogS}, which give rise to formal vector
fields~$Z$ and~$Y$ such that~\eqref{eqA1} holds:
the automorphism $\ex^{\ad_Y}$ of~$\cL$ maps $X=X_0+B$ to $X_0+Z$ and
$\vf{X_0}{Z} = 0$.
Moreover,~$Z$ and~$Y$ are explicitly given by the expansions
\eqref{eqmouldexpZ}--\eqref{eqmouldexpY} (with the convention of
Definition~\ref{def:incluscase}: the map~$\lam$ is to be interpreted
as the inclusion map $\cN \hookrightarrow \C$).


In this context, a formal vector field which commutes with~$X_0$ is
called ``resonant''.
According to~\eqref{eq:monomev}, this means that it is a sum of
``resonant monomials'', \ie multiples of elementary vector fields of the form
$z^k\pa_{z_j}$ with 
\be   \label{eq:jkresonant}
(j,k) \in \cM \quad\text{such that}\quad
\scal{k}{\om} - \om_j=0.
\ee
It may happen that there exist no resonant monomial at all:
one says that the spectrum vector~$\om$ is ``non-resonant'' if
equation~\eqref{eq:jkresonant} has no solution
(a kind of arithmetical condition).
Necessarily $Z=0$ in that case (although $F^\bul$ might be nonzero).


The first part of~\eqref{eqA1} thus says that~$Z$ is a formal resonant
vector field;
classically, $X_0+Z$ is called a \emph{Poincar\'e-Dulac normal form}.
In \cite{EV95}, the particular Poincar\'e-Dulac normal form
corresponding to the choice $A^\bul=0$ (zero gauge solution of
equation~\eqref{eqmouldFG}) is called ``regal prenormal form''.


The automorphism $\ex^{\ad_Y}$ of~$\cL$ is nothing but the action of
the formal flow~$\Phi$ of~$Y$ at time~$1$ by pull-back:
$\ex^{\ad_Y} X = \Phi_*\ii X$, 
hence the second part of~\eqref{eqA1} says that
$\Phi_*\ii X = X_0+Z$,
which corresponds to the formal change of coordinates
$z\mapsto \Phi\ii(z)$ obtained by flowing at time~$1$ along~$-Y$.

%
We have thus recovered the classical results by Poincar\'e and Dulac,
according to which \emph{one can formally conjugate~$X$ to its linear
part~$X_0$ when~$\om$ is non-resonant and, in the general case, to a
formal vector field the expression of which contains only resonant
monomials}.

It is well known that, in general, there is more than one Poincar\'e-Dulac
normal form.


\parage
For a resonant vector~$\om$, there may be only one resonance
relation~\eqref{eq:jkresonant} (\eg for $\om=(2,1)$ in dimension
$N=2$) or infinitely many of them (\eg for $\om=(-1,1)$).
A generic vector $\om$ in~$\C^N$ is non-resonant, but for certain
classes of vector fields like the class of Hamiltonian vector fields
the spectrum vector is necessarily resonant---see Section~\ref{secCBNF}.

As already mentioned, when $\om$ is non-resonant, $F^\bul$ is not
necessarily trivial.
This is because the alphabet $\cN \subset \C^*$ is not necessarily
stable under addition and it may happen that there is a nonempty word
$\ula = \lam_1\cdots\lam_r \in \UN$ such that $\lam_1+\cdots+\lam_r=0$,
in which case formula~\eqref{eqfacileS} fails to define the value of~$S^\ula$.
In fact, in that case, there is no non-trivial mould~$S^\bul$ such
that $\na S^\bul = I^\bul \times S^\bul$.
However, we repeat that \emph{Poincar\'e's formal linearization
  theorem holds in that situation}: we necessarily have $\Bula=0$ for
such a word~$\ula$, and $Z=0$, since there are no non-trivial resonant
formal vector fields.

Here is an example in dimension $N=2$: the spectrum vector $\om =
(5\varpi,2\varpi)$ with $\varpi\in\R^*$ is non-resonant but if we assume
that, associated with $(j,k)=\big( 1, (0,2) \big)$ or $\big( 1,
(0,3) \big)$, there are nonzero coefficients $b_{j,k}$, then $\cN$
contains $\lam=-\varpi$ and~$\mu=\varpi$ and
\eqref{eqinducinit}--\eqref{eqGlogS} yield $F^{\lam\mu} = \frac{1}{\varpi} =
-F^{\mu\lam}$ and $S^{\lam\mu} = -\frac{1}{2\varpi^2} = S^{\mu\lam}$.


\begin{remark}   \label{remStrongNR}
If $\om\in\C^N$ is ``strongly non-resonant'' in the sense that
\[
\scal{k}{\om}\neq0
\quad\text{for any nonzero $k\in\Z^N$,}
\]
then the sum of the letters is nonzero for every nonempty word, hence
$F^\bul=0$ and the symmetral mould~$S^\bul$ is entirely determined by
the utterly simple formula~\eqref{eqfacileS}.
So, in \emph{that} case, the mould equation $\na S^\bul = I^\bul \times
S^\bul$ has a symmetral solution, which is sufficient to obtain formal linearization by mould calculus.
\end{remark}


\begin{remark}
On the other hand, it may happen that $\om$ is resonant but~$0$ does
not belong to the additive monoid generated by~$\cN$ (in particular
this requires that $b_{j,k}=0$ for every $(j,k)\in\cM$ such that $\scal{k}{\om} - \om_j=0$).
In that case $F^\bul$ is necessarily~$0$, hence $X$ is formally linearizable.
\end{remark}


\parage
The formal flow~$\Phi$ can be directly computed in terms of
the symmetral mould $S^\bul = \ex^{G^\bul}$:
viewing the $B_\lam$'s as differential operators which can be composed
(and not only Lie-bracketed), we can define the associative comould
$\ula=\lam_1\cdots\lam_r \in \UN \mapsto B_{\lam_1\cdots\lam_r} = B_{\lam_r}\cdots B_{\lam_1}$ and,
according to the end of Remark~\ref{remdefsym}, we get
\[
Y = \sum_{r\geq 1} \;\sum_{\lam_1,\ldots,\lam_r\in\cN}\,
G^{\lam_1\cdots\lam_r} B_{\lam_1\cdots\lam_r}
\]
(in general $B_{\lam_1\cdots\lam_r} \notin \cL$, but the above sum
is in~$\cL$ and coincides with~$Y$), and
\[
\ex^Y = \ID +
\sum_{r\geq 1} \;\sum_{\lam_1,\ldots,\lam_r\in\cN}\,
S^{\lam_1\cdots\lam_r} B_{\lam_1\cdots\lam_r}
\]
(this operator is not in~$\cL$). 
Now $\ex^{Y} f = f\circ\Phi$ for any $f\in \C[[z_1,\ldots,z_N]]$, hence
$\Phi = (\Phi_1,\ldots,\Phi_N)$ with 
\[
\Phi_j(z_1,\ldots,z_N) = z_j +
\sum_{r\geq 1} \;\sum_{\lam_1,\ldots,\lam_r\in\cN}\,
S^{\lam_1\cdots\lam_r} B_{\lam_1\cdots\lam_r} z_j
\quad\text{for $j=1,\ldots,N$.}
\]
There is a similar formula for~$\Phi\ii$ involving~$\inv S^\bul$. 
%


\section{Classical Birkhoff normal forms}    \label{secCBNF}


\parage
Let $d\in\N^*$. We now set
\[
\cL^\kk \defeq \big\{\, f \in \kk[[x_1,\ldots,x_d,y_1,\ldots,y_d]] \mid
\text{$f$ has order $\ge 2$} \,\big\},
\qquad
\text{$\kk = \R$ or $\C$.}
\]
The symplectic form 
$\sum_{j=1}^d \dd x_j \wedge \dd y_j$
induces the Poisson bracket
$\{f,g\} \defeq \sum_{j=1}^d \big(
\frac{\pa f\,}{\pa x_j}\frac{\pa g\,}{\pa y_j} - \frac{\pa f\,}{\pa y_j}\frac{\pa g\,}{\pa x_j}
\big)$,
which makes~$\cL^\kk$ a Poisson algebra over~$\kk$, and thus a Lie
algebra over~$\kk$ with 
$\ham{\cdot\,}\cdot \defeq \{\cdot\,,\cdot\}$.
We get a complete filtered Lie algebra by setting $X\in
\cL^\kk\ugeq m$ if, as a power series, it has order~$\ge m+2$.

Any $X\in\cL^\kk$ generates a formal Hamiltonian vector field, namely 
\[
\{X,\cdot\,\} = 
\sum_{j=1}^d \Big(
\frac{\pa X}{\pa x_j}\frac{\pa\,\;}{\pa y_j} - \frac{\pa X}{\pa y_j}\frac{\pa\,\;}{\pa x_j}
\Big)
\]
viewed as a derivation of the associative algebra
$\kk[[x_1,\ldots,x_d,y_1,\ldots,y_d]]$.
Let~$X_0$ be the quadratic part of~$X$, so that $\{X_0,\cdot\,\}$ is the
linear part of the formal vector field $\{X,\cdot\,\}$.
The corresponding matrix is Hamiltonian, hence its eigenvalues come
into pairs of opposite complex numbers and we cannot avoid resonances
in this case.
From now on, we assume that 
\be   \label{eqdefXzeromjxjyj}
X_0 = \sum_{j=1}^d \dem \om_j (x_j^2+y_j^2),
\quad \text{hence}\ens
\{X_0,\cdot\,\} = 
\sum_{j=1}^d \om_j \Big(
x_j \frac{\pa\,\;}{\pa y_j} - y_j \frac{\pa\,\;}{\pa x_j}
\Big),
\ee
with a ``frequency vector'' $\om = (\om_1,\ldots,\om_d)\in\kk^d$,
so the eigenvalues of the linear part of the vector field are
$\I\,\om_1,\ldots,\I\,\om_d,-\I\,\om_1,\ldots,-\I\,\om_d$
(which corresponds to a totally elliptic equilibrium point at the
origin when $\kk=\R$).


The formal Hamiltonian normalization problem consists in finding a
formal symplectomorphism~$\Phi$ such that the expression of
$X\circ\Phi$ is as simple as possible
(so that the expression of the conjugate Hamiltonian vector field
$\Phi\ii_* \{X,\cdot\,\}$ is as simple as possible).
We will apply Theorem~\ref{thmA} in the Lie algebra~$\cL^\C$ of
complex formal Hamiltonian functions so as to recover the classical
result according to which 
\begin{quote}{\emph{there exists a formal
symplectomorphism~$\Phi$ (with real coefficients if $\kk=\R$) such
that $X\circ\Phi$ Poisson-commutes with~$X_0$,}}\end{quote} 
\ie $X\circ\Phi$ is a \emph{Birkhoff normal form}
(which implies, at the level of vector fields, that
$\Phi\ii_* \{X,\cdot\,\}$ is a Hamiltonian Poincar\'e-Dulac normal
form).


\parage   \label{secconjlingC}
The series
\be   \label{eq:defzjwj}
z_j(x,y) \defeq \tfrac{1}{\sqrt2} (x_j + \I\, y_j),
\quad
w_j(x,y) \defeq \tfrac{1}{\sqrt2} (\I\,x_j + y_j),
\qquad j = 1, \ldots, d,
\ee
satisfy 
$\sum \dd x_j \wedge \dd y_j = \sum \dd z_j \wedge \dd w_j$
and 
\be   \label{eq:Poissmonomev}
\{ X_0, z^k w^\ell \} = \I \, \scal{k-\ell}{\om} \, z^k w^\ell
\quad\text{for any $k,\ell\in\N^d$.}
\ee
Using them as a change of coordinates and writing the
generic formal series as
\[
\sum_{k,\ell\in\N^d} b_{k,\ell} \, x^k y^\ell 
= \sum_{k,\ell\in\N^d} c_{k,\ell} \, z^k w^\ell,
\]
we identify the complex Poisson algebras
$\C[[x_1,\ldots,x_d,y_1,\ldots,y_d]]$ and
$\C[[z_1,\ldots,z_d,w_1,\ldots,w_d]]$.
%
%
The real Poisson algebra $\R[[x_1,\ldots,x_d,y_1,\ldots,y_d]]$ can be
seen as the subspace consisting of the fixed points of the
conjugate-linear involution~$\gC$ which maps
%
%
$\sum b_{k,\ell} \, x^k y^\ell$ to $\sum \ov{b_{k,\ell}} \, x^\ell
y^k$;
note that~$\gC$ maps $\sum c_{k,\ell} \, z^k w^\ell$ to
$\sum (-\I)^{\abs{k+\ell}} \ov{c_{k,\ell}} \, z^\ell w^k$,
hence the coefficients $b_{k,\ell}$ are real if and only if 
\be   \label{eqcondreel}
\ov{c_{k,\ell}} = \I^{\abs{k+\ell}} c_{\ell,k}
\quad\text{for all $k,\ell\in\N^d$.}
\ee

Let $X\in\cL^\kk$ with quadratic part~$X_0$ as in~\eqref{eqdefXzeromjxjyj}.
Introducing 
\[
\cM \defeq \big\{\, (k,\ell)\in\N^d\times\N^d \mid \abs{k} + \abs{\ell} \ge
3 \,\big\},
\]
we can decompose $B \defeq X-X_0 \in \cL^\kk_1$ as
$B = \sum\limits_{(k,\ell)\in\cM} c_{k,\ell} \, z^k w^\ell$ with
coefficients $c_{k,\ell} \in \C$, and set
\be   \label{eq:defBndefcNNd}
B_n \defeq \sum_{ \substack{(k,\ell)\in\cM \,\text{such}\\[.5ex]
    \text{that}\, k-\ell=n} } 
c_{k,\ell} \, z^k w^\ell \in \cL^\C_1
\qquad\text{for $n\in \cN \defeq \Z^d$,}
\ee
so that $X = X_0 + \sum B_n$ and, for each $n\in\cN$,
\be   \label{eq:deflamHam}
\{ X_0, B_n \} = \lam(n) B_n, \qquad
\lam(n) = \I \, \scal{n}{\om} \in \C.
\ee
Moreover, if $\kk=\R$, then condition~\eqref{eqcondreel}
holds, whence 
\be   \label{eqcondreelB}
\gC(B_n)=B_{-n}
\quad\text{for all $n\in\Z^d$.}
\ee
in that case.


\parage   \label{sec:argumreal}
Let us apply Theorem~\ref{thmA} to~$\cL^\C$.
For any complex-valued $A^\bul\in\Alt\RES(\cN)$ (recall that
$\cN=\Z^d$ and~$\lam$ is defined by~\eqref{eq:deflamHam}), 
Theorem~\ref{thmB} yields alternal moulds $F^\bul, G^\bul \in \C^\UN$, explicitly given by
\eqref{eqinducinit}--\eqref{eqGlogS}, such that
$Z, Y \in \cL\ugeq1^\C$ defined by
\[
Z = \sum_{r\ge1} \, \sum_{\un\in\cN^r} \,
\frac{1}{r} F^\un \, \Bun,
\qquad
Y = \sum_{r\ge1} \, \sum_{\un\in\cN^r} \,
\frac{1}{r} G^\un \, \Bun
\]
satisfy~\eqref{eqA1}.

Formulas \eqref{eqinducinit}--\eqref{eqGlogS} show that, if $\kk=\R$ and $A^\bul$ is
real-valued\footnote{   \label{footA}
In fact it is sufficient that the complex conjugate of $A^{n_1\cdots n_r}$ is
$A^{(-n_1)\cdots (-n_r)}$ for any word ${n_1\cdots n_r}$.
}, then the complex conjugate of $F^{n_1\cdots n_r}$ is
$F^{(-n_1)\cdots (-n_r)}$ and similarly for~$G^\bul$ (because
$\ov{\lam(n)} = \lam(-n)$ for each $n\in\cN$);
on the other hand, $\gC$ maps
$\Bun = \{B_{n_r},\{\ldots\{B_{n_2},B_{n_1}\}\ldots\}\}$
to
$\{B_{-n_r},\{\ldots\{B_{-n_2},B_{-n_1}\}\ldots\}\}$
%
%
(because of~\eqref{eqcondreelB} and because $\gC$ is a 
real Lie algebra automorphism\footnote{
%
%
Indeed, $\gC$ can be viewed as the symmetry 
$f_1+\I f_2 \mapsto f_1 -\I f_2$
associated with the direct sum 
$\cL^\C = \cL^\R \oplus \I \cL^\R$,
it is a real Lie algebra automorphism because~$\cL^\R$ is a real Lie subalgebra.
} of~$\cL^\C$)
and is conjugate-linear,
hence we get 
%
%
$Z, Y \in \cL\ugeq1^\R$
%
%
in that case.


So $Z,Y\in\cL\ugeq1^\kk$ whether $\kk=\C$ or~$\R$.
The automorphism $\ex^{\ad_Y}$ of~$\cL^\kk$ is nothing but the action
of the formal flow~$\Phi$ at time~$1$ of the formal Hamiltonian vector
field $\{Y,\cdot\,\}$ by composition:
$\ex^{\ad_Y} X = X\circ\Phi$, 
hence the second part of~\eqref{eqA1} says that
$X\circ\Phi = X_0+Z$,
where~$\Phi$ is a formal symplectomorphism with coefficients in~$\kk$,
which implies
$\Phi\ii_* \{X,\cdot\,\} = \{X_0+Z,\cdot\,\}$
at the level of the formal Hamiltonian vector fields.
The components of~$\Phi$ can be directly computed from the symmetral
mould~$S^\bul$ by means of~\eqref{eqexpadYSB}:
\begin{align*}
\Phi_j(x,y) = x_j +
\sum_{r\geq 1} \;\sum_{n_1,\ldots,n_r\in\cN}\,
S^{n_1\cdots n_r} \ad_{B_{n_r}} \cdots \ad_{B_{n_1}} x_j \\[1ex]
\Phi_{d+j}(x,y) = y_j +
\sum_{r\geq 1} \;\sum_{n_1,\ldots,n_r\in\cN}\,
S^{n_1\cdots n_r} \ad_{B_{n_r}} \cdots \ad_{B_{n_1}} y_j
\end{align*}
for $j=1,\ldots,N$
(the series~$x_j$ and~$y_j$ have been excluded from the definition
of~$\cL^\kk$, but~\eqref{eqexpadYSB} holds as an identity between
operators acting in the whole of
$\kk[[x_1,\ldots,x_d,y_1,\ldots,y_d]]$).


The first part of~\eqref{eqA1} says that $X_0+Z$ is a ``Birkhoff
  normal form'', in the sense that it Poisson-commutes with~$X_0$.
According to~\eqref{eq:Poissmonomev}, this means that all the monomials in its $(z,w)$-expansion are of the
form $c_{k,\ell}\, z^k w^\ell$ with $\scal{k-\ell}{\om}=0$.


\parage
Instead of~\eqref{eq:defBndefcNNd}, one can as well take
\[
\cN \defeq \{\, \I \, \scal{k-\ell}{\om} \mid (k,\ell)\in\cM
\;\text{and}\;
c_{k,\ell} \neq 0 \,\} \subset \C,
\qquad
B_\lam \defeq \sum_{ \substack{(k,\ell)\in\cM \,\text{such that}\\[.5ex]
    \I \, \scal{k-\ell}{\om} = \lam} }
c_{k,\ell}  \, z^k w^\ell,
\]
so that~\eqref{eq:deflamHam} is replaced by $\{ X_0,B_\lam \} = \lam
B_\lam$ for each $\lam\in\cN$ and one can use the formalism of Definition~\ref{def:incluscase}.


%
When~$\om$ is strongly non-resonant in the sense of
Remark~\ref{remStrongNR}, 
the relation $\scal{k-\ell}{\om}=0$ implies $k-\ell=0$, hence
\[
Z = \sum_{ \abs{\ell} \ge 2 } C_\ell \, z^\ell w^\ell 
= \sum_{ \abs{\ell} \ge 2 } \I^{\abs{\ell}} C_\ell \, I_1^{\ell_1}\cdots I_d^{\ell_d},
\qquad
I_j \defeq \dem(x_j^2+y_j^2)
\quad\text{for $j=1,\ldots,d$,}
\]
with certain complex coefficients~$C_\ell$, which satisfy
$\I^{\abs{\ell}} C_\ell \in \R$ when $\kk=\R$.


It is easy to check that, when $\om$ is strongly non-resonant, the
Birkhoff normal form is unique (but not the formal symplectomorphism
conjugating~$X$ to it).


\parage
%
%
\hspace{-.4em}\emph{Remark.}\label{remBNFpoleps}
  Exactly the same formalism would apply to the perturbative situation of
  a Hamiltonian~$X$ which is also a formal series in~$\eps$ (an
  indeterminate playing the role of a parameter).
  We would take
  $\cL^\kk \defeq \kk[[x_1,\ldots,x_d,y_1,\ldots,y_d,\eps]]$ with
  $\kk = \R$ or~$\C$, with Lie bracket
  $\ham{\cdot\,}\cdot \defeq \{\cdot\,,\cdot\}$ as before, and with filtration
  induced by the total order in the $2d+1$ indeterminates.
Then, for any $X = X_0 + B$ with~$X_0$ as in~\eqref{eqdefXzeromjxjyj}
and $B \in \cL^\kk\ugeq1$,
Theorem~\ref{thmA} yields a formal symplectomorphism~$\Phi$ such that
$X\circ\Phi=X_0+Z$ Poisson-commutes with~$X_0$.
%


\parage   \label{sec:BNFclassmixte}
The above formalism, as it stands, does not allow us to deal directly
with $C^\infty$ functions of $(x,y)$, but there is a simple variant
which allows for mixed Hamiltonians, formal in~$\eps$ (as in
Remark~\ref{remBNFpoleps}) with coefficients $C^\infty$ in
$(x,y)$.
However, to have a decomposition of $X-X_0$ as a formally summable
series of eigenvectors of $\{X_0,\cdot\,\}$, we must restrict
ourselves to a certain kind of $C^\infty$ functions.
With a view to allowing for comparison with certain quantum
Hamiltonians in Section~\ref{secSemiCl}, we denote by~$\cS$ the
Schwartz class and set, for $\kk = \R$ or $\C$,
\begin{align*}
\gS_0^\kk &\defeq \Big\{\, f \in \cS(\R^d\times\R^d,\kk) \mid 
\exists \ti f\in C^\infty\big( (\R_{\ge0})^d,\kk \big) 
\;\text{such that}\; f(x,y) \equiv \ti f\big(
\tfrac{x_1^2+y_1^2}{2},\ldots,\tfrac{x_d^2+y_d^2}{2}
\big) \,\Big\},
\\[1ex]
\gS^\kk &\defeq 
\Big\{\, \sum_{(k,\ell)\in \Om}
b_{k,\ell}(x,y) \, x^k y^\ell \mid
\text{$\Om$ finite subset of $\N^d\times\N^d$,}
\;\text{$b_{k,\ell}\in \gS_0^\kk$ for each $(k,\ell)\in \Om$}
\,\Big\},
\\[1ex]
\cL^\kk &\defeq \gS^\kk[[\eps]].
\end{align*}
We choose $\om = (\om_1,\ldots,\om_d)\in\R^d$ and consider the
same~$X_0$ as in~\eqref{eqdefXzeromjxjyj}.
\emph{Theorem~\ref{thmA} can be applied to any
$X\in\cL^\R$ of the form $X_0+[\text{order $\ge1$ in $\eps$}]$ so as to produce $Z,Y\in\cL^\R$
such that $\{X_0,Z\}=0$ and $\ex^{\ad_Y} X = X_0+Z$.}

Indeed, $\cL^\R$ and~$\cL^\C$ are complete filtered Lie algebras
(filtered by the order in~$\eps$), and $B\defeq X-X_0$ can be
decomposed into a formally convergent series as follows:
we can write
$B = \sum\limits_{\N^d\times\N^d} b_{k,\ell}(x,y,\eps)\, x^k y^\ell$ with
$b_{k,\ell}(x,y,\eps) \in \gS_0^\R[[\eps]]\ugeq1$, hence
$B = \sum_{n\in\Z^d} B_n $ with
\[
B_n \defeq \sum_{ \substack{ k',\ell',k'',\ell''\in\N^d \,\text{such} \\[.5ex]
\text{that}\, k'+k'' = n + \ell'+\ell'' }}
\frac{(-\I)^{\abs{\ell'+k''}}}{(\sqrt{2})^{\abs{k'+k''+\ell'+\ell''}}}
%
\binom{k'+\ell'}{k'} \binom{k''+\ell''}{k''}
b_{k'+\ell',k''+\ell''} \,
z(x,y)^{k'+k''} w(x,y)^{\ell'+\ell''}
\]
with the same $z_j, w_j$ as in~\eqref{eq:defzjwj}.
This is the result of using $(x,y)\mapsto(z,w)$ as a change of
coordinates; notice that the decomposition 
$B = \sum b_{k,\ell}(x,y,\eps)\, x^k y^\ell$ is not unique, but the
decomposition $B=\sum B_n$ is, and we have
\[
\{ X_0, B_n \} = \lam(n) B_n, \qquad
\lam(n) = \I \, \scal{n}{\om} \in \C
\qquad\text{for each $n \in \cN \defeq \Z^d$}.
\]
Note that each $B_n \in \cL^\C$, but the realness assumption on~$X$
implies that $\gC(B_n)=B_{-n}$ with the same conjugate-linear
involution~$\gC$ as in Section~\ref{secconjlingC}.
Therefore, for any real-valued $A^\bul \in \Alt\RES(\cN)$, we get
alternal moulds $F^\bul,G^\bul \in \C^\UN$ such that
\[
Z = \sum_{r\ge1} \, \sum_{\un\in\cN^r} \,
\frac{1}{r} F^\un \, \Bun,
\qquad
Y = \sum_{r\ge1} \, \sum_{\un\in\cN^r} \,
\frac{1}{r} G^\un \, \Bun
\]
define $Z,Y \in \cL^\R\ugeq1$ with the desired properties
(the realness of $Z$ and~$Y$ follows from the same argument as in Section~\ref{sec:argumreal}).

Note that if $\om$ is strongly non-resonant in the sense of
Remark~\ref{remStrongNR}, then $Z \in \gS_0^\R[[\eps]]$.


\section{Multiphase averaging}   \label{secmultiphas}


\parage
Let $d,N\in\N^*$.
We call ``slow-fast'' a vector field of the form 
\be   \label{eqvfslowfast}
X = \sum_{j=1}^d \big(\om_j + \eps f_j(\ph,I,\eps) \big)
\frac{\pa\,\;}{\pa\ph_j}
+ \sum_{k=1}^N \eps g_k(\ph,I,\eps) \frac{\pa\,\;}{\pa I_k},
\ee
where $\om = (\om_1,\ldots,\om_d)\in\R^d$ is called the frequency
vector,
the idea being that, for $\eps>0$ ``small'', the time evolution of the
variables~$I_k$ will be ``slow'' compared to the ``fast''
variables~$\ph_j$ (at least if $\om\neq0$).
We take $\ph \in \T^d$, where $\T \defeq \R/2\pi\Z$, so the fast variables are angles.
When $d=N$, this includes the case of vector field generated by a
near-integrable Hamiltonian
\be   \label{eqHamslowfast}
X\Ham = \scal{\om}{I} + \eps h(\ph,I,\eps)
\ee
for the symplectic form $\sum_{j=1}^d \dd I_j \wedge\dd \ph_j$, for
which $f_j = \frac{\pa h\,}{\pa I_j}$ and $g_j = -\frac{\pa h\,}{\pa \ph_j}$.

We will deal with formal series in~$\eps$ whose coefficients are trigonometric
polynomials in~$\ph$ with complex-valued coefficients smooth in~$I$.
More precisely, we take
$f_1, \ldots, f_d, g_1, \ldots, g_N$ or~$h$ in the complex associative
algebra~$\gA^\C$ or the real associative algebra~$\gA^\R$ defined by
\be    \label{eq:defgAR}
\gA^\C \defeq \gS[\ex^{\pm\I\,\ph_1}, \ldots,\ex^{\pm\I\,\ph_d}][[\eps]],
\qquad
\gA^\R \defeq \{\, f\in \gA^\C \mid \ov{f(\ov\ph,I,\ov\eps)} = f(\ph,I,\eps) \,\}
\ee
with $\gS \defeq C^\infty(D,\C)$,
where~$D$ is an open subset of~$\R^N$
(or $D=D'\times\T^{N''}$ with~$D'$ open subset of~$\R^{N'}$ and $N'+N''=N$);
in fact, we could as well take for~$\gS$ a linear subspace of
$C^\infty(D,\C)$, as long as it is stable under multiplication and all
the derivations $\frac{\pa\,\;}{\pa I_k}$, \eg one could take the
Schwartz space $\cS(\R^N,\C)$.

Note that~$\gA^\R$ coincides with the set of fixed points of the conjugate-linear
involution~$\gC$ which maps
$\sum b_{n,p}(I) \, \eps^p \, \ex^{\I\scal{n}{\ph}}$
to 
$\sum \ov{b_{n,p}(I)} \, \eps^p \, \ex^{-\I\scal{n}{\ph}}$.


Let $X_0 \defeq \sum \om_j \frac{\pa\,\;}{\pa\ph_j}$ and $X_0\Ham
\defeq \scal{\om}{I}$.
\emph{The formal averaging problem asks for a formal conjugacy between~$X$
and a vector field $X_0+Z$ which commutes with~$X_0$
or, in the Hamiltonian version, for a formal symplectomorphism~$\Phi$
such that $X\Ham\circ\Phi$ Poisson-commutes with $X_0\Ham$.}
The reader is referred to \cite{LM} and \cite{MSa} for the importance
of this problem.


Let us set $\kk\defeq\C$ and consider the complete
filtered Lie algebra~$\cL^\C$ consisting of vector fields whose
components belong to~$\gA^\C$
(with $[\cdot\,,\cdot] = \vf{\cdot\,}\cdot$)
or, in the Hamiltonian case, $\cL^\C=\gA^\C$ itself (with
$[\cdot\,,\cdot] = \{\cdot\,,\cdot\}$, the Poisson bracket),
filtered by the order in~$\eps$ in both cases.
If we impose furthermore that the components of the vector fields or
the Hamiltonian functions belong to~$\gA^\R$, then we get a real Lie
subalgebra~$\cL^\R$.


\parage 
We can apply Theorem~\ref{thmA} to~$\cL^\C$.
Indeed, any slow-fast system as above can be written as a sum of
eigenvectors of $\ad_{X_0}=\vf{X_0}{\cdot\,}$ 
or $\ad_{X_0\Ham}=\{X_0\Ham,\cdot\,\}$,
\[
X = X_0 + \sum_{n\in\cN} B_n
\quad\text{or}\quad 
X\Ham = X_0\Ham + \sum_{n\in\cN} B_n\Ham, 
\]
with $\cN = \Z^d$ corresponding to all possible Fourier modes:
\[
B_n = \ex^{\I\scal{n}{\ph}} \bigg(
\sum_{j=1}^d b_{n,j}\cc1(I,\eps) \frac{\pa\,\;}{\pa\ph_j}
+ \sum_{k=1}^N b_{n,k}\cc2(I,\eps) \frac{\pa\,\;}{\pa I_k}
\bigg), \qquad
B_n\Ham = \ex^{\I\scal{n}{\ph}} b_n(I,\eps),
\]
with certain coefficients $b_{n,j}\cc1, b_{n,k}\cc2, b_n \in \gS[[\eps]]$.
In both cases, the eigenvalue map is
\be   \label{eqevmapInom}
n\in\Z^d \mapsto \lam(n) = \I \, \scal{n}{\om} \in \C.
\ee
%
%
For any choice of $A^\bul \in \Alt\RES(\cN)$, we thus get $Y,Z \in
\cL^\C$ of order~$\ge1$ in~$\eps$ such that
\[
\vf{X_0}{Z}=0
\ens\text{and}\ens
\ex^{\ad_Y} X =X_0+Z,
\quad\text{or}\quad
\{X_0\Ham,Z\}=0
\ens\text{and}\ens
\ex^{\ad_Y} X\Ham =X_0\Ham+Z.
\]

In the first case, as in Section~\ref{secPDNF}, 
\be
\ex^{\ad_Y} X = \Phi\ii_* X
\ee 
where~$\Phi$ is the formal flow at time~$1$ of the formal vector
field~$Y$.
In the second case, as in Section~\ref{secCBNF}, 
\be
\ex^{\ad_Y} X\Ham = X\Ham\circ\Phi
\ee 
where~$\Phi$ is the formal symplectomorphism obtained by flowing at
time~$1$ along the formal Hamiltonian vector field $\{Y,\cdot\,\}$.
In both cases, 
\be
\text{$Z$ only contains Fourier modes $n\in\cN$ such that
  $\scal{n}{\om}=0$.}
\ee
Therefore, when~$\om$ is strongly non-resonant in the sense of
Remark~\ref{remStrongNR}, the components of the formal vector
field~$Z$ (in the first case) or the formal series~$Z$ (in the second
case) do not depend on~$\ph$, they are formal series in~$\eps$ with
coefficients depending on~$I$ only:
\emph{the formal change of coordinates~$\Phi\ii$ has eliminated the fast
phase~$\ph$ from the vector field.}


If the coefficients $f_1, \ldots, f_d, g_1, \ldots, g_N$ or~$h$ belong
to~$\gA^\R$, \ie if we start with~$X$ or~$X\Ham$ in~$\cL^\R$,
and we take~$A^\bul$ real-valued,
then one gets $Y,Z \in \cL^\R$ for the
same reason as in Section~\ref{secCBNF}:
$\cL^\R$ consists of the fixed points of~$\gC$ which is a real Lie
algebra automorphism\footnote{%
To see it, first observe that $C\col (\ph,I) \mapsto (-\ph,I)$ is
conformal-symplectic with a factor~$-1$
hence the composition with~$C$ is a complex Lie algebra anti-automorphism~$\Th_C$
of~$\cL^\C$, then note that $\gC = \Th_C\circ\SD$ where~$\SD$ is the symmetry associated
with the direct sum 
$\gA^\C = \gR \oplus \I\gR$,
with $\gR \defeq C^\infty(D,\R)[\ex^{\pm\I\,\ph_1}, \ldots,\ex^{\pm\I\,\ph_d}][[\eps]]$
real linear subspace,
and~$\SD$ is a real Lie algebra anti-automorphism because the Lie
bracket of vector fields with components in~$\gR$ has its components
in~$\I\gR$ and, for Hamiltonians, $\{\gR,\gR\} \subset \I\gR$.
}
mapping~$B_n$ to~$B_{-n}$ and is conjugate-linear,
and the complex conjugate of $F^{n_1\cdots n_r}$ is
$F^{(-n_1)\cdots (-n_r)}$ and similarly for~$G^\bul$
(the condition described in footnote~\ref{footA} is sufficient
for this).


\parage  
%
%
\hspace{-.3em}\emph{Remark.}\label{varBNFcl}
In the real Hamiltonian case, $X_0\Ham+Z$ can be considered as a Birkhoff
normal form for $X\Ham = X_0\Ham + \eps h(\ph,I,\eps)$.
If we choose $\gS = \cS(\R^N,\C)$ in~\eqref{eq:defgAR}, then we get
the action-angle analogue of Section~\ref{sec:BNFclassmixte}.




\section{Quantum Birkhoff normal forms}   \label{secQBNF}


\parage
Let~$\cH$ be a complex Hilbert space, with inner product denoted by $\inn{\cdot}{\cdot}$.
In this section, by ``operator'', we mean an unbounded linear operator
with dense domain.

Let us consider an operator~$X_0$ of~$\cH$ which is diagonal
in an orthonormal basis $\ebas = (e_k)_{k\in I}$ of~$\cH$:
\[
X_0 \, e_k = E_k \, e_k, \qquad k\in I,
\]
with eigenvalues $E_k \in \C$, \ie $X_0$ is a normal operator, or 
$E_k \in \R$, in which case $X_0$ is self-adjoint.
Let~$\ACE$ consist of all operators of~$\cH$ whose domain is the dense subspace
$\Span_\C(\ebas)$ and which preserve $\Span_\C(\ebas)$.
Let~$\LRE$ consist of all symmetric operators among the previous ones.
In particular, the restriction of~$X_0$ to $\Span_\C(\ebas)$ belongs
to~$\ACE$, and even to~$\LRE$ in the self-adjoint case.

Notice that an element~$B$ of~$\ACE$ is determined by a complex
``infinite matrix'' $(\bet_{k,\ell})_{k,\ell\in I}$:
\be   \label{eqdefBbetkell}
B e_k = \sum_{\ell\in I}\bet_{k,\ell} \, e_\ell, \qquad
k \in I,
\ee
with the following ``finite-column'' property: 
if $\bet_{k,\ell}\neq0$ then~$\ell$ belongs to
a finite subset of~$I$ depending on~$k$ and~$B$.
The domain of the adjoint operator~$B^*$ then contains $\Span_\C(\ebas)$, and
\[
B^* e_k = \sum_{\ell\in I} \ov{\bet_{\ell,k}} \, e_\ell, \qquad
k \in I.
\]


\begin{lemma}   \label{lemOpgAcLe}
(i)
For $A,B\in \ACE$, there is a well-defined composite operator
$AB\in\ACE$, and for this product $\ACE$ is an associative
algebra over~$\C$.
%

\noindent
(ii)
Let $\hb>0$ be fixed. The formula
\[
\qu{A}{B} \defeq \tfrac{1}{\I\hb} (AB-BA),
\qquad A, B \in \ACE,
\]
makes~$\ACE$ a Lie algebra over~$\C$, which we denote by $\LCE$.
%

\noindent
(iii) 
$\LRE$ is a real Lie subalgebra of $\LCE$,
coinciding with the set of the fixed points of the
involution
\[
\gC \col B\in \ACE \mapsto {B^*}_{| \Span_\C(\ebas)} \in \ACE,
\]
which is a conjugate-linear anti-homomorphism of the associative
algebra~$\ACE$, and a real Lie algebra automorphism of~$\LCE$.
\end{lemma}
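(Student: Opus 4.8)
The plan is to verify each of the three assertions by direct, elementary computation with the infinite matrices $(\beta_{k,\ell})$, using the finite-column condition at each step to guarantee that every sum that appears is actually finite and that the finite-column property is preserved.

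First I would prove (i). Given $A,B\in\ACE$ with matrices $(\alpha_{k,\ell})$ and $(\beta_{k,\ell})$, I would define $AB$ on the basis by $AB\,e_k=\sum_\ell \beta_{k,\ell}(A e_\ell)=\sum_\ell\sum_m\beta_{k,\ell}\alpha_{\ell,m}e_m$; the inner sum over $\ell$ is finite by the finite-column property of $B$, and for each such $\ell$ the sum over $m$ is finite by that of $A$, so the matrix of $AB$ is $\gamma_{k,m}=\sum_\ell\beta_{k,\ell}\alpha_{\ell,m}$, which again has only finitely many nonzero entries in row $k$. Hence $AB\in\ACE$, it preserves $\Span_\C(\ebas)$, and its domain is all of $\Span_\C(\ebas)$. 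Associativity of this product and bilinearity over~$\C$ are then immediate from the corresponding properties of matrix multiplication (all reorderings are legitimate since every sum is finite), and the identity operator restricted to $\Span_\C(\ebas)$ is a unit; this gives the associative algebra structure.

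For (ii), once (i) is in place the bracket $\qu{A}{B}=\tfrac{1}{\I\hb}(AB-BA)$ lands in $\ACE$ because $\ACE$ is a $\C$-vector space closed under composition. Bilinearity and antisymmetry are obvious, and the Jacobi identity follows from the general fact that for any associative algebra $\gA$ the bracket $[x,y]=xy-yx$ makes $\LIE(\gA)$ a Lie algebra, together with the observation that rescaling the bracket by the nonzero scalar $\tfrac{1}{\I\hb}$ preserves the Lie algebra axioms. So $\LCE$ is a Lie algebra over~$\C$.

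For (iii) I would first check that $\gC$ is well-defined, i.e.\ that $B^*$ restricted to $\Span_\C(\ebas)$ lies in $\ACE$: this is exactly the computation already recorded just before the lemma, $B^*e_k=\sum_\ell\ov{\beta_{\ell,k}}\,e_\ell$, and the finite-column property of $B^*$ amounts to the statement that each \emph{column} of $(\beta_{k,\ell})$ is finitely supported — but wait, that is not automatic from the row condition. The honest way is: the matrix of $B^*$ is $(\ov{\beta_{\ell,k}})_{k,\ell}$, whose row $k$ is the conjugate of column $k$ of $B$; finite-columnness of $B^*$ therefore needs finitely-supported columns of $B$. This is the one genuinely delicate point, and I expect it to be the main obstacle: strictly speaking $\ACE$ as defined (only the row/finite-column condition on $B$) is \emph{not} stable under $B\mapsto B^*$ in general. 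I would resolve it by taking the stated hypothesis at face value — the paper's $\ACE$ tacitly comprises operators whose matrices are finitely supported in \emph{both} rows and columns (this is the natural reading making $\LRE$ nonempty and $\gC$ an involution), and under that reading $\gC$ maps $\ACE$ to $\ACE$. Granting that, $\gC$ is conjugate-linear because $(\lambda A+B)^*=\ov\lambda A^*+B^*$, it is an involution because $B^{**}=B$ on $\Span_\C(\ebas)$, it reverses products since $(AB)^*=B^*A^*$ (so it is an anti-homomorphism of the associative algebra), and consequently $\gC(\qu{A}{B})=\tfrac{1}{\I\hb}\gC(AB-BA)$; here the extra conjugate-linearity turns $\tfrac{1}{\I\hb}$ into $-\tfrac{1}{\I\hb}$, while anti-homomorphism turns $AB-BA$ into $B^*A^*-A^*B^*=-(A^*B^*-B^*A^*)$, and the two sign changes cancel, giving $\gC(\qu{A}{B})=\qu{\gC A}{\gC B}$, so $\gC$ is a (real) Lie algebra automorphism of $\LCE$. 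Finally, $\LRE$ is by definition $\{B\in\ACE\mid B=\gC(B)\}$, which is an $\R$-linear subspace closed under the bracket by the automorphism property, hence a real Lie subalgebra; and $X_0$ is self-adjoint precisely when $E_k\in\R$, recovering the earlier remark.
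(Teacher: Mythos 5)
Your verification is correct, and since the paper disposes of this lemma with the single word ``Obvious'', your elementary matrix computation is exactly the argument that is being left implicit: parts (i) and (ii) are fine as you write them, and in (iii) your sign-cancellation computation $\gC(\qu{A}{B})=\qu{\gC A}{\gC B}$ (conjugate-linearity flips $\tfrac{1}{\I\hb}$, the anti-homomorphism flips the commutator) is the right way to see that $\gC$ is a real Lie algebra automorphism and that its fixed-point set $\LRE$ is a real Lie subalgebra.

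The delicate point you flag in (iii) is genuine and worth keeping: with the paper's literal definition of $\ACE$ (domain $\Span_\C(\ebas)$, invariance of $\Span_\C(\ebas)$, i.e.\ finite columns only), the map $B\mapsto B^*|_{\Span_\C(\ebas)}$ need not land in $\ACE$ --- e.g.\ $Be_k=2^{-k}e_1$ gives $B^*e_1=\sum_\ell 2^{-\ell}e_\ell\notin\Span_\C(\ebas)$, and for $Be_k=e_1$ one does not even have $e_1\in D(B^*)$, so the sentence preceding the lemma is itself loose. Your repair (require finitely supported rows \emph{and} columns, which is stable under products and under $\gC$) makes the statement true and is compatible with everything done later in the paper. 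Two small caveats: this restriction is not needed ``to make $\LRE$ nonempty'' --- symmetric finite-column operators automatically have finite rows, since their matrices are Hermitian, so $\LRE$ is the same set under either reading; and an alternative, equally faithful repair is to observe that in all of the paper's uses (the involution characterizing $\LRE$, and the identity $\gC(B_\lambda)=B_{-\lambda}$ for the homogeneous components of a symmetric perturbation) $\gC$ is only ever applied to operators whose matrix is the conjugate-transpose of a finite-column matrix, so row-finiteness comes for free there and the lemma's assertions hold on the subclass where they are invoked. Either way, your proof is sound.
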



\begin{proof}
Obvious.
\end{proof}


\parage
We want to perturb~$X_0$ in~$\LCE$, \resp in~$\LRE$, by a ``small'' perturbation and
work formally, as in a Rayleigh-Schr\"odinger-like situation.
So, we introduce an indeterminate~$\eps$ and consider
\[
\cL^\C \defeq \LCE[[\eps]], 
\quad \text{\resp}\ens
\cL^\R \defeq \LRE[[\eps]], 
\]
as a complete filtered Lie algebra over~$\C$, \resp over~$\R$, filtered by order in~$\eps$.

To decompose an arbitrary perturbation as a sum of eigenvectors of
$\ad_{X_0}$, we notice that, for $B\in\cL^\C$ with matrix
$\big(\bet_{k,\ell}(\eps)\big)_{k,\ell\in I}$ so
that~\eqref{eqdefBbetkell} holds
(with formal series $\bet_{k,\ell}(\eps) \in \C[[\eps]]$),
we can write
\be   \label{eqdecompBLCE}
B = \sum_{(k,\ell)\in I\times I} \ti B_{k,\ell}
\quad\text{with}
\ens
\ti B_{k,\ell} \defeq \ket{e_\ell} \, \bet_{k,\ell}(\eps) \, \bra{e_k} 
%
%
\ee
(here we used the
Dirac notation \ie 
$\ti B_{k,\ell} e_j = \bet_{k,\ell}(\eps) \, e_\ell$ if $j=k$,
$\ti B_{k,\ell} e_j = 0$ else).
The sum in~\eqref{eqdecompBLCE} may be infinite, but it is
well-defined because its action in $\Span_\C(\ebas)$ is finitary.
%
%
%
%
%
One then easily checks that 
\[
\qu{X_0}{\ti B_{k,\ell}} = \tfrac{1}{\I\hb}(E_\ell-E_k) \ti B_{k,\ell}.
\]
%
%
We thus have $B = \sum_{\lam\in\cN} B_n$ with
\be    \label{eqdefBlamQU}
\cN \defeq \big\{\, \tfrac{1}{\I\hb}(E_\ell-E_k) \mid 
(k,\ell) \in I\times I \,\big\}, \qquad
B_\lam \defeq \sum_{ \substack{(k,\ell) \,\text{such that}\\[.5ex]
E_\ell-E_k = \I\hb\lam} } \ket{e_\ell} \, \bet_{k,\ell}(\eps) \, \bra{e_k} 
\quad\text{for $\lam\in\cN$.}
\ee
Note that, if $X_0,B \in \cL^\R$, then 
\be   \label{eqcondreellamBnst}
%
%
\gC(B_\lam) = B_{-\lam}
\quad \text{for any $\lam\in\cN$.}
\ee
%


We thus suppose that we are given a perturbation
$B \in \cL^\C\ugeq1$.
We can apply Theorem~\ref{thmA} to $X = X_0+B\in\cL^\C$, with $\kk=\C$.
For each choice of $A^\bul \in \Alt_0(\cN)$, we get $Z,Y \in
\cL^\C$ of order~$\ge1$ in~$\eps$ such that
\be   \label{eqquXZY}
\qu{X_0}{Z}=0,
\qquad
\ex^{\ad_Y} X =X_0+Z.
\ee

Since $\ACE[[\eps]]$ is a complete filtered associative algebra
and~$Y$ is of order~$\ge1$ in~$\eps$, we can define
$U \defeq \ex^{\frac{1}{\I\hb}Y}$ by the exponential series: it is an
automorphism of $\Span_\C(\ebas)$ formal in~$\eps$, with inverse 
$U\ii = \ex^{-\frac{1}{\I\hb}Y}$,
and $\ex^{\ad_Y} X = U X U\ii$.
So, the second part of~\eqref{eqquXZY} says that
\[
U (X_0+B) U\ii = X_0+Z, 
\qquad U = \ex^{\frac{1}{\I\hb}Y}.
\]
Mould calculus shows that
\[
\tfrac{1}{\I\hb} Y = 
\sum_{r\ge1}\sum_{\lam_1,\ldots,\lam_r\in\cN}
\, (\tfrac{1}{\I\hb})^r
G^{\lam_1\cdots \lam_r}
B_{\lam_r} \cdots B_{\lam_1},
\qquad
U = \ID + 
\sum_{r\ge1} \; \sum_{\lam_1,\ldots,\lam_r\in\cN}\,
(\tfrac{1}{\I\hb})^r
S^{\lam_1\cdots \lam_r}
B_{\lam_r} \cdots B_{\lam_1},
\]
and there is a similar formula for~$U\ii$ involving the mould~$\inv S^\bul$.

If we assume that each eigenvalue~$E_k$ of~$X_0$ is simple (an
assumption analogous to the strong non-resonance condition of
Remark~\ref{remStrongNR}), then it is easy to check that the first
part of~\eqref{eqquXZY} says that~$Z$ is diagonal in the basis~$\ebas$.
In general, it says that~$Z$ is block-diagonal, where the blocks refer
to the partition $I=\bigsqcup I_a$, $I_a \defeq \{\, k\in I \mid E_k =
a \,\}$.

Suppose now that $X_0\in\cL^\R$, \ie it is a self-adjoint operator,
and also $B\in\cL^\R$. Then, in view of~\eqref{eqcondreellamBnst}, by
the same arguments as in Section~\ref{secCBNF} or~\ref{secmultiphas},
we get $Z,Y \in\cL^\R$.
Note that $U$ is then a ``formal unitary operator''.
The formally conjugate operator $X_0+Z$ is called a quantum Birkhoff normal
form for $X_0+B$.


\parage   \label{sec:rm13}
  The simplest example is that of the self-adjoint operator
  $X_0 = -\I\hb \sum \om_j \frac{\pa\,\;}{\pa\ph_j}$ of
  $\cH = L^2(\T^d)$,
which is diagonal in the Fourier basis.
We have $I=\Z^d$ and, for each $k\in\Z^d$,
$e_k = (2\pi)^{-d/2} \, \ex^{\I\,\scal{k}{\ph}}$
and the corresponding eigenvalue is $E_k = \hb \, \scal{k}{\om}$ for $k\in\Z^d$.
In particular,
\[
\tfrac{1}{\I\hb}(E_\ell-E_k) = \I\, \scal{k-\ell}{\om}.
\]

The simplest example for $\cH = L^2(\R^d)$ is the quantum harmonic oscillator
\be   \label{eq:quharmosc}
X_0 = -\dem \hb^2 \De + \sum_{j=1}^d \dem\om_j^2 x_j^2
\ee
(with $\om_1,\ldots,\om_d>0$ given),
for which the spectrum is natually indexed by $I = \N^d$:
\be    \label{eq:evquharmosc}
E_k = \hb \, \scal{k+(\dem,\ldots,\dem)}{\om},
\qquad k \in \N^d,
\ee
and~$\ebas$ is given by the Hermite functions.

In these cases, one can index the eigenvector decomposition $B=\sum B_n$ of
finite-column operators by $\cN=\Z^d$, by a slight modification
of~\eqref{eqdefBlamQU}:
\[
B_n \defeq \sum_{\substack{ (k,\ell)\in\N^d\times\N^d \\ k-\ell =n}} 
\ket{e_\ell} \, \bet_{k,\ell}(\eps)\, \bra{e_k},
\qquad n\in\Z^d.
\]
This way, the eigenvalue map is $\lam(n) = \I \, \scal{n}{\om}$.

Moreover, in these cases, one may wish to restrict oneself to the
``finite-band'' case defined by replacing~$\LRE$ with its
subspace~$\LREfb$ consisting of those elements associated with infinite
matrices $(\bet_{k,\ell})_{k,\ell\in I}$ for which there
exists~$K\in\N$ such that $\bet_{k,\ell}=0$ for $\abs{k-\ell}<K$.
Since $\LRfb \defeq \LREfb[[\eps]]$ is a Lie subalgebra of~$\cL^\R$, we
get $Z,Y\in\LRfb$ whenever we start with a perturbation $B\in\LRfb$ or
order~$\ge1$ in~$\eps$.


\section{Semi-classical limit}\label{secSemiCl}

%
\parage   \label{sec:presentsemicl}
In general the dependence of the eigenvalues $E_k$ in the Planck
constant $\hb$ is very complicated, very often intractable. This makes
the set $\cN=\cN(\hb)$ in \eqref{eqdefBlamQU} very difficult to follow
as $\hb\to 0$. Nevertheless, this difficulty is absent in the two
examples of~$X_0$ of Section~\ref{sec:rm13}, since we have seen that
in these cases we can choose $\cN=\Z^d$ and
$\lam(n) = \I \, \scal{n}{\om}$, thus independent of $\hb$. 
%


We will now consider an operator $X = X_0 + B\QU$ obtained by Weyl
quantization\footnote{See \eg \cite{Fo} for a
  general exposition of pseudo-differential operators and Weyl
  quantization. The few definitions and facts we need will be recalled
in Section~\ref{secWeylQu}.} 
from a classical Hamiltonian $\sig(x,\xi,\eps)$ of the type
introduced in Section~\ref{sec:BNFclassmixte}.
For the sake of simplicity, we choose~$X_0$ to be the quantum harmonic
oscillator~\eqref{eq:quharmosc} on $L^2(\R^d)$
(we could treat as well the case of the trickier Weyl quantization on~$\T^d$ and
choose for~$X_0$ the first example of Section~\ref{sec:rm13}, starting
from a classical Hamiltonian $\sig(x,\xi,\eps)$ of the type alluded to in Section~\ref{varBNFcl}).
We take arbitrary $\om_1,\ldots,\om_d>0$; it will \emph{not} be necessary to
assume that the corresponding frequency vector
$\om \defeq (\om_1,\ldots,\om_d)$ is non-resonant.

The quantum harmonic oscillator~$X_0$ is the Weyl quantization of the Hamiltonian
\be   \label{eq:sigzquadr}
\sig_0(x,\xi) \defeq  \sum_{j=1}^d \dem (\xi_j^2 + \om_j^2 x_j^2),
\ee
which differs from the quadratic Hamiltonian~\eqref{eqdefXzeromjxjyj}
considered in Section~\ref{secCBNF} only by the conformal-symplectic
change of coordinates induced by $\xi_j =\om_j y_j$.
Let us thus consider a formal Hamiltonian $\sig \in \gS^\R[[\eps]]$ of
the form
\be   \label{eq:sigsigzBcl}
\sig = \sig_0 + B\cl, \quad
\text{with $B\cl = B\cl(x,\xi,\eps)$ of order $\ge1$ in~$\eps$,}
\ee
exactly as in Section~\ref{sec:BNFclassmixte} except for the change $y
\to\xi$.
Weyl quantization gives rise to a self-adjoint operator $X = X_0 +
B\QU$ of $L^2(\R^d)$.
We are interested in comparing the quantum Birkhoff normal form $X_0+Z\QU$ of~$X$ and the classical Birkhoff
normal form $\sig_0+Z\cl$ of~$\sig$.

We will see how transparent mould calculus makes the relation between
$Z\QU$ and~$Z\cl$.
The point is that it is the very same mould~$F^\bul$ which will appear
in the mould expansions $Z\cl = F^\bul \Bul\cl$ and $Z\QU = F^\bul
\Bul\QU$;
the difference lies only in the Lie comould to be used in each
expansion, but the semi-classical limit of the
quantum Lie comould $\Bul\QU$ is easily tractable in this context, with
its symbol tending to $\Bul\cl$ as $\hb\to0$.
In fact, all the ``difficult'' part, that is solving the mould equation
which generates combinatorial difficulties solved only by induction,
is exactly the same in the classical and quantum cases.

\parage   \label{secWeylQu}
The operator~$X_0$ is obtained from~$\sig_0$ by
replacing~$\xi_j$ by $-i\hb\frac{\pa\,\;}{\pa x_j}$. 
More generally, Weyl quantization associates to a function~$\sig$
belonging \eg to the Schwartz class
$\cS(\R^d\times\R^d)=\cS(T^*\R^{d})$ an operator~$\V$
which acts on a function $\varphi\in L^2(\R^d)$ through the formula
\be\label{weylq} 
\V\varphi(x)=\int_{\R^d\times\R^d} 
\sig\Big(\frac{x+y}2,\xi\Big) \,\ex^{\I\frac{\xi(x-y)}\hb}\varphi(y)
\frac{\dd\xi \dd y}{(2\pi\hb)^d}.  
\ee 
In other words, the operator~$\V$ has an integral kernel given by
 \[
K_\V(x,y) \defeq \int_{\R^d\times \R^d} \sig\Big(\frac{x+y}2,\xi\Big) \, \ex^{\I\frac{\xi(x-y)}\hb}\frac{\dd\xi }{(2\pi\hb)^d}.
\]
A straightforward computation shows  that this formula is invertible by 
\be\label{symbweylq}
\sig(x,\xi)=\int_{\R^d} K_\V(x+\delta,x-\delta) \,\ex^{-2\I\frac{\xi\delta}\hb}\dd\delta.
\ee
In that situation, we use the notation $\sig = \sig_\V$ and say that the
function~$\sig$ is the ``symbol'' of the operator~$\V$.
For instance, with the notations of Section~\ref{sec:presentsemicl}, $\sig_0 = \sig_{X_0}$.

The following result is the fundamental one concerning the transition
quantum-classical. Its proof is straightforward for symbols in the
Schwartz class, by using~\eqref{weylq} and~\eqref{symbweylq}. It gives
a mod($\hbar)$-homomorphism between quantum and classical Lie
algebras.

\begin{lemma}\label{poissq}
Suppose that the operators~$V$ and~$W$ are obtained by Weyl
quantization from the symbols~$\sig_V$ and~$\sig_W$. Then the symbol
of $\frac{1}{\I\hb}[W,V]$ is
\be\label{adadad}
\sigma_{\frac{1}{\I\hb}[W,V]} = A(\sigma_W\otimes\sigma_V), 
\ee 
where $A(f\otimes g)(x,\xi)=\frac1\hb{\sin{\left(\hb(\frac\partial{\partial
        q}\frac\partial{\partial p'}-\frac\partial{\partial
        p}\frac\partial{\partial
        q'})\right)}f(q,p)g(q',p')}|_{q=q'=x,\, p=p'=\xi}$.

In particular
\be\label{ytr}
\lim_{\hbar\to 0}\sigma_{\frac{1}{\I\hb}[W,V]}=\{\sigma_W,\sigma_V\}
\ee
and, in the case of a quadratic symbol $\sig_{X_0}$ like in~\eqref{eq:sigzquadr},
\be\label{kjh}
\sigma_{\frac{1}{\I\hb}[X_0,V]}=\{\sigma_{X_0},\sigma_V\}.
\ee
\end{lemma}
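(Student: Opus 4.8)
The plan is to compute the symbol of $\frac{1}{\I\hb}[W,V]$ directly from the Weyl composition formula and then extract the three asserted consequences. First I would recall (or derive from \eqref{weylq}--\eqref{symbweylq}) the standard formula for the symbol of a product $WV$ of two Weyl-quantized operators: for Schwartz-class symbols $\sig_W,\sig_V$ one has $\sigma_{WV}(x,\xi) = \exp\!\big( \tfrac{\I\hb}{2}(\partial_q\partial_{p'} - \partial_p\partial_{q'})\big)\, \sig_W(q,p)\,\sig_V(q',p')\big|_{q=q'=x,\,p=p'=\xi}$. This is obtained by inserting the two integral kernels, performing the Gaussian $\dd y\,\dd\eta$ integrations (a stationary-phase computation that is exact because the phase is quadratic), and recognizing the resulting exponential of a second-order differential operator. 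For symbols in $\cS(\R^{2d})$ all integrals converge absolutely and the manipulations are rigorous; no asymptotics are needed since the identity is exact. Subtracting the analogous formula for $\sigma_{VW}$ (obtained by swapping the roles, which flips the sign in the exponent) and dividing by $\I\hb$ turns the difference of exponentials into $\frac{1}{\I\hb}\big(\ex^{\I\hb\Lambda/2} - \ex^{-\I\hb\Lambda/2}\big) = \frac{2}{\hb}\sin\!\big(\tfrac{\hb}{2}\Lambda\big)$ acting on $\sig_W\otimes\sig_V$, where $\Lambda = \partial_q\partial_{p'}-\partial_p\partial_{q'}$; after the harmless rescaling of variables this is exactly the operator $A$ in the statement, proving \eqref{adadad}.

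Next, \eqref{ytr} follows by expanding $\frac{1}{\hb}\sin(\hb\Lambda) = \Lambda - \tfrac{\hb^2}{6}\Lambda^3 + \cdots$: the leading term is $\Lambda(\sig_W\otimes\sig_V)\big|_{\text{diag}} = \partial_x\sig_W\,\partial_\xi\sig_V - \partial_\xi\sig_W\,\partial_x\sig_V = \{\sig_W,\sig_V\}$, and every remaining term carries a positive power of $\hb^2$, so the limit as $\hb\to0$ is the Poisson bracket. Finally, for \eqref{kjh} I would observe that when $\sig_{X_0}$ is a quadratic polynomial (as in \eqref{eq:sigzquadr}), $\Lambda^k(\sig_{X_0}\otimes\sig_V)$ vanishes for $k\ge2$: each application of $\Lambda$ differentiates the first factor once, so $\Lambda^2$ already involves a second derivative of $\sig_{X_0}$ in the $q$-variables times a second derivative in the $p$-variables of the \emph{same} quadratic function, and since $\Lambda = \partial_q\partial_{p'} - \partial_p\partial_{q'}$ the two differentiations after the first hit $\sig_{X_0}$ in two different monomials-worth of variables—more carefully, $\Lambda^2$ applied to (quadratic)$\otimes$(anything) produces a third mixed partial of the quadratic, which is zero. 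Hence the sine series terminates after the first term and $\sigma_{\frac{1}{\I\hb}[X_0,V]} = \{\sig_{X_0},\sig_V\}$ exactly, with no error in $\hb$.

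The only genuinely delicate point is the exact product formula and the justification that the Gaussian integrations are legitimate for Schwartz symbols; everything after that is an elementary power-series manipulation. I would therefore spend most of the write-up on carefully setting up \eqref{adadad} — tracking the phase $\tfrac{\I}{\hb}\big(\xi(x-y) + \eta(y-z) + \cdots\big)$, completing the square, and identifying the output — and treat \eqref{ytr} and \eqref{kjh} as short corollaries of the Taylor expansion of $\sin$. For readers willing to accept the standard symbolic calculus, one may alternatively just cite \cite{Fo} for \eqref{adadad} and give only the two-line arguments for the limit and the quadratic case; I would state it so that both routes are available.
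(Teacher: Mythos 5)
Your overall route is exactly the one the paper has in mind: the paper gives no written proof beyond the remark that the statement is ``straightforward for symbols in the Schwartz class, by using~\eqref{weylq} and~\eqref{symbweylq}'', i.e.\ precisely the kernel/Gaussian computation leading to the exact Moyal product formula that you propose, followed by antisymmetrization and the expansion of the sine. So the strategy is fine. Two points, however, need repair. First, your justification of~\eqref{kjh} is wrong as stated: it is \emph{not} true that $\Lambda^k(\sig_{X_0}\otimes\sig_V)=0$ for $k\ge2$, where $\Lambda=\partial_q\partial_{p'}-\partial_p\partial_{q'}$. Indeed $\Lambda^2$ applied to $(\text{quadratic})\otimes(\text{anything})$ involves only \emph{second} derivatives of the quadratic factor, e.g.\ for $\sig_{X_0}$ as in~\eqref{eq:sigzquadr} one gets terms such as $\om_j^2\,\partial_{\xi_j}^2\sig_V+\partial_{x_j}^2\sig_V$, which do not vanish in general; there is no ``third mixed partial'' produced at order two. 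The correct (and equally short) argument is that the sine series contains only the odd powers $\Lambda,\Lambda^3,\Lambda^5,\dots$, and for $k\ge1$ every term of $\Lambda^{2k+1}(\sig_{X_0}\otimes\sig_V)$ carries at least a third derivative of $\sig_{X_0}$, which vanishes because $\sig_{X_0}$ is quadratic; hence only the linear term survives and the bracket symbol equals $\{\sig_{X_0},\sig_V\}$ exactly.

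Second, a normalization caveat: the exact computation you outline yields $\sigma_{\frac{1}{\I\hb}[W,V]}=\frac{2}{\hb}\sin\bigl(\frac{\hb}{2}\Lambda\bigr)(\sig_W\otimes\sig_V)\big|_{\mathrm{diag}}$, which is \emph{not} turned into the operator $\frac{1}{\hb}\sin(\hb\Lambda)$ written in~\eqref{adadad} by any ``harmless rescaling of variables'' --- the two expressions agree at order $\hb^0$ but differ at order $\hb^2$. You should either carry the constants through carefully and state the formula with $\frac{2}{\hb}\sin(\frac{\hb}{2}\Lambda)$ (flagging the discrepancy with the displayed $A$, which appears to be a conventional or typographical slip), or simply note that the precise numerical factors inside the sine are immaterial for the two consequences: in either normalization the leading term is $\Lambda$, giving~\eqref{ytr} in the limit $\hb\to0$, and the higher odd powers annihilate $(\text{quadratic})\otimes(\text{anything})$, giving~\eqref{kjh} exactly.
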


\parage
On the one hand, according to Section~\ref{sec:BNFclassmixte}, the
Hamiltonian~\eqref{eq:sigsigzBcl} can be decomposed as
\[
\sig =\sig_0 + \sum_{n\in\cN} B\cl_n, \qquad
\{ \sig_0, B\cl_n \} = \lam(n) B\cl_n, \qquad
\lam(n) = \I \, \scal{n}{\om},
\]
with $\cN \defeq \Z^d$.
Denoting by~$\Bul\cl$ the Lie comould defined from
$(B_n\cl)_{n\in\cN}$ by means of Poisson brackets, we get a Birkhoff
normal form of~$\sig$ in the form $\sig_0+Z\cl$ with
\be   \label{eq:BNFcl}
Z\cl = \sum_{r\ge1} \, \sum_{\un\in\cN^r} \,
\frac{1}{r} F^{\lam(n_1)\cdots\lam(n_r)} \, \Bun\cl,
\ee
where we choose for~$F^\bul$ the first of a pair of alternal moulds
$(F^\bul,G^\bul)$ solving~\eqref{eqmouldFG} in the canonical case of
Section~\ref{paragaCanCase}
(we may choose any alternal solution, \eg the zero gauge solution;
note that if~$\om$ is strongly non-resonant, then~$Z\cl$ is uniquely
determined, hence this choice is not relevant, but we make no such
hypothesis about~$\om$).

On the other hand, the Weyl quantization of $\sig = \sig_0 + B\cl$ is
$X=X_0+B\QU$ and, for each $n\in\cN$, the Weyl quantization~$B_n\QU$
of~$B_n\cl$ satisfies
\[
\sig_{\frac{1}{\I\hb}[X_0,B_n\QU]} = \{ \sig_0, B_n\cl \} = \sig_{\lam(n)B_n\QU}
\]
because of~\eqref{kjh}, hence~$B_n\QU$ is the $n$-homogeneous
component of~$B\QU$.
Note that $B\QU$ and the $B_n\QU$'s belong to the space
$\LREfb[[\eps]]$ defined at the end of Section~\ref{sec:rm13}.
Now, according to Section~\ref{secQBNF}, we obtain a quantum Birkhoff normal
form of~$X$ in the form $X_0+Z\QU$ with
\be   \label{eq:BNFQU}
Z\QU = \sum_{r\ge1} \, \sum_{\un\in\cN^r} \,
\frac{1}{r} F^{\lam(n_1)\cdots\lam(n_r)} \, \Bun\QU,
\ee
if we take for~$F^\bul$ the \emph{same} mould as
in~\eqref{eq:BNFcl} and define~$\Bul\QU$ as the Lie comould generated
by $(B_n\QU)_{n\in\cN}$ by means of the Lie bracket
$\qu{\cdot\,}\cdot$ of $\LREfb[[\eps]]$
(note that, \emph{if} $\om$ is strongly non-resonant, then the
eigenvalues~\eqref{eq:evquharmosc} are simple and~$Z\QU$ is uniquely determined).

For each letter $n\in\cN$, the symbol of $B_n\QU$ is the Hamiltonian
$B_n\cl$, but in general, for a word $\un\in\UN$ of length $\ge2$, the
symbol of $\Bun\QU$ is not exactly $\Bun\cl$. However, iteration of~\eqref{ytr}
implies
\be   \label{eq:limsemiclcomld}
\lim_{\hb\to0} \sig_{\Bun\QU} = \Bun\cl
\quad\text{for each nonempty $\un\in\UN$}.
\ee
Putting together \eqref{eq:BNFcl}, \eqref{eq:BNFQU} and~\eqref{eq:limsemiclcomld},
we thus obtain very simply the following result:
 
\begin{THM}
One has
\[
\sig_{Z\QU} \xrightarrow[\hb\to0]{} Z\cl \quad\text{termwise in~$\eps$,}
\]
\ie the coefficients of the $\eps$-expansion of the classical Birkhoff
normal form $X_0+Z\cl$ are the limits, as $\hb\to0$, of the symbols of
the coefficients of the $\eps$-expansion of the quantum Birkhoff
normal form $X_0+Z\QU$.
  \end{THM}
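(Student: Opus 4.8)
The plan is to exploit the fact that \emph{the very same mould $F^\bul$} governs both the classical Birkhoff normal form $Z\cl$ and the quantum one $Z\QU$, so that the only thing that varies between the two is the Lie comould. Concretely, $Z\cl$ is given by the formally summable expansion \eqref{eq:BNFcl} over the Lie comould $\Bul\cl$ built from $(B_n\cl)_{n\in\cN}$ via Poisson brackets, while $Z\QU$ is given by \eqref{eq:BNFQU}, the \emph{identical} mould expansion but over the Lie comould $\Bul\QU$ built from $(B_n\QU)_{n\in\cN}$ via the brackets $\qu{\cdot\,}\cdot$ of $\LREfb[[\eps]]$. Both expansions live in complete filtered Lie algebras filtered by the order in~$\eps$, and in both the coefficient in front of each $\eps$-power is a \emph{finite} sum of iterated brackets (by the Addendum to Theorem~\ref{thmA}, or directly because $B\cl, B\QU$ have order $\ge 1$ in $\eps$, so only words of length $<m$ contribute mod $\eps^m$). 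Thus, to prove the theorem it is enough to show that, termwise in~$\eps$, applying the symbol map to the quantum expansion and letting $\hb\to0$ reproduces the classical expansion term by term.

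First I would isolate the $\eps$-degree: fix $m\in\N$ and work modulo $\eps^m$. On each side, the coefficient of $\eps^j$ for $j<m$ is a finite $\C$-linear combination (with coefficients $\tfrac1r F^{\lam(n_1)\cdots\lam(n_r)}$, which are the same numbers on both sides and do not depend on $\hb$) of iterated brackets $\Bun\cl$ resp.\ $\Bun\QU$ for finitely many words $\un$ of length $r<m$; here one uses that each $B_n\cl$ (and $B_n\QU$) has order $\ge1$ in~$\eps$ and that there are only finitely many Fourier modes $n$ occurring at each $\eps$-order because $B\cl\in\gS^\R[[\eps]]$ is a trigonometric polynomial in the angle variables at each order. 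Since the symbol map $V\mapsto\sig_V$ is $\C$-linear on $\LREfb[[\eps]]$ and the coefficients $F^{\lam(n_1)\cdots\lam(n_r)}$ are $\hb$-independent, it suffices to prove \eqref{eq:limsemiclcomld}, i.e.\ $\sig_{\Bun\QU}\xrightarrow[\hb\to0]{}\Bun\cl$ for each nonempty word~$\un$, and then pass to the limit under the finite sum.

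The identity \eqref{eq:limsemiclcomld} I would establish by induction on $r(\un)$. For $r(\un)=1$ it holds by definition: $\sig_{B_n\QU}=B_n\cl$, since $B_n\QU$ is by construction the Weyl quantization of $B_n\cl$ (and one checks via \eqref{kjh} that it really is the $n$-homogeneous component of $B\QU$, so this identification is consistent). For the inductive step, write $\un = c\,\un'$ with $\un'$ of length $r-1$ and $c\in\cN$, so that $\Bun\QU = \qu{B_{c}\QU}{\,\Bul[\un']\QU} = \tfrac1{\I\hb}[B_c\QU, B_{[\un']}\QU]$, and apply Lemma~\ref{poissq}: by \eqref{adadad} the symbol of $\tfrac1{\I\hb}[B_c\QU, B_{[\un']}\QU]$ equals $A(\sig_{B_c\QU}\otimes\sig_{B_{[\un']}\QU})$, where $A$ is the bidifferential operator $\tfrac1\hb\sin(\hb(\pa_q\pa_{p'}-\pa_p\pa_{q'}))$; by \eqref{ytr}, $A(f\otimes g)\to\{f,g\}$ as $\hb\to0$. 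Using the inductive hypothesis $\sig_{B_{[\un']}\QU}\to B_{[\un']}\cl$ together with $\sig_{B_c\QU}=B_c\cl$, and the fact that the classical Lie comould satisfies $\Bun\cl = \{B_c\cl, B_{[\un']}\cl\}$ (the Poisson-bracket definition of $\Bul\cl$), one concludes $\sig_{\Bun\QU}\to\Bun\cl$. One should be a little careful that the convergence is genuinely valid at the level of symbols as functions (or at least coefficientwise in~$\eps$), which is why restricting to the finite-band space $\LREfb[[\eps]]$ and to symbols in $\gS^\R[[\eps]]$ (hence, essentially, Schwartz-class coefficients in $(x,\xi)$) is important: Weyl quantization and its inverse \eqref{symbweylq} behave well there, and the bidifferential operator $A$ applied to such symbols produces again such symbols with controlled $\hb$-dependence.

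\textbf{Main obstacle.} The combinatorial and Lie-theoretic content is entirely taken care of by Theorems~\ref{thmA} and~\ref{thmB} and the fact that the same mould~$F^\bul$ is used in both settings, so no difficulty lies there. The only real point requiring care is the interchange of the limit $\hb\to0$ with the (infinite, but $\eps$-locally finite) mould sum, and the precise sense in which $A(f\otimes g)\to\{f,g\}$: one must check that, at each fixed $\eps$-order, only finitely many words contribute, that each term converges, and that the relevant symbols stay in a class where Lemma~\ref{poissq} applies and where the $\sin$-series expansion of $A$ converges to the Poisson bracket uniformly enough. Since $\sig$ is a trigonometric polynomial in the angles with Schwartz (or $C^\infty$ of compact support) dependence in the remaining variables, formal in~$\eps$, and we work in the finite-band subalgebra, this is routine but is where the analytic hypotheses of Section~\ref{sec:BNFclassmixte} and Section~\ref{sec:rm13} are genuinely used.
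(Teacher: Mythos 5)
Your proposal is correct and takes essentially the same route as the paper: the identical mould $F^\bul$ in \eqref{eq:BNFcl} and \eqref{eq:BNFQU}, the limit \eqref{eq:limsemiclcomld} obtained by iterating Lemma~\ref{poissq} (i.e.\ \eqref{ytr}), and passage to the limit term by term since each $\eps$-order involves only a finite sum. Only a cosmetic slip: in the induction one should peel off the \emph{last} letter, writing $\un=\un'\,c$ so that $\Bun=[B_c,B_{[\un']}]$, since $\Bun=[B_{n_r},[\ldots[B_{n_2},B_{n_1}]\ldots]]$, not the first letter as you wrote.
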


  In the case of a strongly non-resonant frequency vector~$\om$
  satisfying a Diophantine condition, this result was first
  established in \cite{GP1} and later using the Lie method in
  \cite{DGH}.

\vspace{1.5cm}

\noindent \textbf{Acknowledgments}: 
The authors are grateful to Fr\'ederic Menous for pointing out the reference \cite{menousjmp}.
This work has been partially
 carried out thanks to the support of the A*MIDEX project (n$^{\text{o}}$
 ANR-11-IDEX-0001-02) funded by the ``Investissements d'Avenir" French
 Government program, managed by the French National Research Agency
 (ANR). 
 The research leading these results was also partially supported by the
 French National Research Agency under the reference ANR-12-BS01-0017.
T.P.\ thanks the Dipartimento di Matematica, Sapienza
 Universit\`a di Roma, for its kind hospitality during the completion
 of this work.  

\vspace{.5cm}



\begin{flushleft}

\end{flushleft}

\end{document}